\definecolor{refkey}{rgb}{0,0,1}
\definecolor{labelkey}{rgb}{1,0,0}
\numberwithin{equation}{section}
\newtheorem{theorem}{Theorem}[section]
\newtheorem{proposition}[theorem]{Proposition}
\newtheorem{lemma}[theorem]{Lemma}
\newtheorem{corollary}[theorem]{Corollary}
\newtheorem{Definition}[theorem]{Definition}
\newenvironment{definition}{\begin{Definition}\rm}{\end{Definition}}
\newtheorem{Remark}[theorem]{Remark}
\newenvironment{remark}{\begin{Remark}\rm}{\end{Remark}}
\newtheorem{RHproblem}[theorem]{RH problem}
\newcommand{\C}{\mathbb{C}}
\newcommand{\R}{\mathbb{R}}
\newcommand{\MM}{\mathcal M}
\renewcommand{\Re}{{\rm Re} \,}
\renewcommand{\hat}{\widehat}
\renewcommand{\tilde}{\widetilde}
\def\supp{\mathop{\mathrm{supp}}\nolimits}
\def \deg{\mbox{{\rm deg} }}
\def \capa{\mbox{{\rm cap}\,}}
\begin{document}
\title{Vector Energy and Large Deviation}
\author{T. Bloom, N. Levenberg, and F. Wielonsky}
\maketitle 
\begin{abstract}
For $d$ nonpolar compact sets $K_1,...,K_d\subset \C$, admissible weights $Q_1,...,Q_d$ and a positive semidefinite interaction matrix $C=(c_{i,j})_{i,j=1,...,d}$ with no zero column, we define natural discretizations of the weighted energy 
$$E_Q(\mu):=\sum_{i,j=1}^d c_{i,j}I(\mu_i,\mu_j) + 2\sum_{j=1}^d\int_{K_j} Q_j d\mu_j$$ of a $d-$tuple of positive measures $\mu=(\mu_1,...,\mu_d)\in \mathcal M_r(K)$ where $\mu_j$ is supported in $K_j$ and has mass $r_j$. We have an $L^{\infty}-$type discretization $W(\mu)$ and an $L^2-$type discretization $J(\mu)$ defined using a fixed measure $\nu=(\nu_1,...,\nu_d)$. This leads to a large deviation principle for a canonical sequence $\{\sigma_k\}$ of probability measures on $\mathcal M_r(K)$ if $\nu$ is a strong Bernstein-Markov measure. 
\end{abstract}
%
%
\section{Introduction and main results} 

We prove a large deviations principle (LDP) which applies to the normalized counting measure of a random point in many multiple orthogonal polynomial ensembles, including Angelesco and certain Nikishin ensembles with compact supports. Our starting point is a very general vector energy setting first introduced in \cite{[GR]}, \cite{NS} and further studied in \cite{BKMW}, \cite{ESZ} and \cite{[HK1]} associated to $d$ compact sets $K_1,...,K_d\subset \C$, admissible weights $Q_1,...,Q_d$, and a positive semidefinite interaction matrix $C=(c_{i,j})_{i,j=1,...,d}$. We then define energy discretizations giving rise to the appropriate configuration space of points on the $d-$tuple of sets $K_1,...,K_d$. 

  Multiple orthogonal polynomials (MOPs) are a generalization of orthogonal polynomials in which the orthogonality is distributed among a number of orthogonality weights. They have been studied in connection with problems in analytic number theory, approximation theory and from the point of view of new special functions. In recent years MOPs have appeared in probability theory and certain models in mathematical physics coming from random matrices as MOPs can naturally give rise to ensembles of probability measures. This was first observed by Bleher and Kuijlaars \cite{[BK]} in the study of random matrix models with external source. Moreover, in the Gaussian case, the external source model is equivalent to a model involving non-intersecting Brownian motion. An excellent account of the recent developments in the application of MOPs with extensive references can be found in \cite{K1} or \cite{K2}. Generally the MOPs have been studied using Riemann-Hilbert methods.

In this paper we are primarily concerned with the almost sure convergence of a random point in an ensemble to an equilibrium measure (Corollary \ref{416}) and a large deviation principle (Theorem \ref{ldp}). We begin with the discretization of a general vector energy.
Two important special cases of the general ensembles we study in this paper are the Angelesco MOP case with interaction matrix $ C=(c_{i,j})_{i,j=1,\ldots,d}$ where $c_{i,i}=1$ and $c_{i,j}=1/2$ for $i\neq j$ and the Nikishin MOP case with interaction matrix given by
$c_{i,i}=1$, $c_{i,j}=-1/2$ if $|i-j|=1$, $c_{i,j}=0$ otherwise. These ensembles commonly arise from models in mathematical physics and they have a natural discretization -- the points represent eigenvalues of matrices or positions of particles (see \cite{K1}, \cite{K2}). In addition, $\beta$ ensembles of random matrices correspond to a $1\times 1$ interaction matrix consisting of a positive real number $\beta$ and hence they may also be considered as a special case of the ensembles considered here.

In the case of disjoint compact intervals of the real line, a LDP for Angelesco ensembles was established in \cite{bloomangel} using potential theory
and in \cite{ESZ} where an extension of  the method of Ben Arous-Guionnet \cite{BG, AGZ} was used. Recently, a LDP has also been obtained for the spectral measures of a non-centered Wishart matrix model, whose eigenvalue distribution can be described as a Nikishin ensemble in the presence of an external field on $\R_{+}$ and a constraint on $\R_{-}$, see \cite{[HK2]}. In this paper we use potential theory and polynomial inequalities to establish our results, valid for {\it nonpolar compacta in} $\C$. We first prove the almost sure convergence of a random point to the equilibrium measure and subsequently establish the LDP. This method shows  (Remark \ref{72}) that the rate function in the LDP  is independent of the measure used to define $ L^2$ norms as long as the measure satisfies a general condition, a {\it strong Bernstein-Markov property}.

The outline of the paper is as follows. In the next section we describe the vector energy minimization problems in the weighted and unweighted case. For clarity of exposition, we assume our compact sets are disjoint until section 8. The main idea is to give a discrete version of these energies $E(\mu)$ and $E_Q(\mu)$ following the ideas in \cite{bloomangel}. These discretizations are $L^{\infty}$ approximations and in order to develop the appropriate LDP, we need to introduce $L^2$ versions. This leads to notions of a (weighted) Bernstein-Markov property for vector measures which is the content of section 4. The utilization of a measure satisfying a strong (rational) Bernstein-Markov property is crucial for our approach to the LDP. To handle the case where some coefficients $c_{i,j}$ of $C$ are negative we need to extend the notion of Bernstein-Markov property from polynomials to rational functions. In Theorem \ref{strongrbm} we show that any nonpolar compact set in $\C$ admits a measure satisfying a strong rational Bernstein-Markov property. Here we need to appeal to a result from \cite{PELD} in $\C^n$ for $n>1$.

In particular, Proposition \ref{weightedtd} shows that the asymptotics of a sequence of (weighted) $L^2$ ``free energies'' are the same as their $L^{\infty}$ counterparts. To overcome a technical issue in the proof of our LDP in section 7, we must consider a non-admissible weighted problem, which shows up even in the scalar setting. We deal with this in section 5 using an approximation scheme with the aid of a deep result of Ancona \cite{ancona}. We define our $L^2$ and $L^{\infty}$ vector energy functionals $J$ and $W$ in section 6 culminating in the statement and proof of our LDP in section 7. Section 8 indicates cases where our results remain valid, including an LDP, for possibly intersecting sets $K_1,...,K_d$.

 \section{Vector equilibrium problems}\label{vep}

We begin with some potential-theoretic preliminaries in the scalar setting; i.e., associated to a single compact set. Let $Q$ be an {\it admissible} weight on a nonpolar compact set $K\subset \C$. This means $Q$ is lowersemicontinuous and finite on a set of positive logarithmic capacity; i.e., $\capa(\{z\in K: Q(z)<+\infty\})>0$. The usual weighted energy minimization problem is: 
$$\inf_{\mu \in \mathcal M(K)} \Big( I(\mu)+2\int_KQd\mu \Big)$$
where $\mathcal M(K)$ denotes the probability measures on $K$ and $I(\mu)$ is the standard logarithmic energy:
$$I(\mu)=\int_K \int_K \log\frac{1}{|z-t|}d\mu(z)d\mu(t)>-\infty.$$ 
We consider the slightly more general case where we minimize over $\mathcal M_r(K)$, the positive measures on $K$ of total mass $r>0$. We always have existence and uniqueness of a weighted energy minimizing measure $\mu^{K,Q}$. We write $\mu^K$ in the unweighted case ($Q\equiv 0$). Recalling that the logarithmic potential function of a measure $\mu$ is defined by 
$$U^{\mu}(z):=\int\log\frac{1}{|z-t|}d\mu(t),$$
we say $K$ is {\it regular} if $U^{\mu^K}$ is continuous. In the weighted case, 
there exists a constant $F$ such that the logarithmic potential $U:=U^{\mu^{K,Q}}$ satisfies
\begin{align*}
U(z) +Q& \geq F,\quad \text{q.e. }z\in K,\\
U(z) +Q& \leq F,\quad \forall z\in\supp(\mu^{K,Q})\\
\end{align*}
(``q.e.'' means off of a polar set). Indeed, one can, in analogy with the case $r=1$, define a weighted extremal function
$$V_{Q_{}}(z)=\sup\{g(z): \ g \in \mathcal L_r, ~g\leq Q_{}\text{ on }K\},$$
and $V_{Q}^{*}$, its uppersemicontinuous regularization, where $\mathcal L_r$ denotes the class of subharmonic functions in $\C$ of growth at most $r\log |z|$ as $|z|\to \infty$. Then 
$$V_Q^{*}=-U+F.$$

Let us now consider the vector case, where a $d$-tuple of nonpolar compact sets $K=(K_{1},\ldots,K_{d})$ and a $d$-tuple of admissible weights $Q=(Q_{1},\ldots,Q_{d})$ with $Q_{i}$ defined on $K_{i}$, $i=1,\ldots,d$, are given, along with a symmetric positive semidefinite interaction matrix
$$C:=(c_{i,j})_{i,j=1}^{d},$$
with no zero columns (or rows).  
Throughout, until section 8, we assume that the sets $K_{i}$, $i=1,\ldots,d$, are {\em pairwise disjoint}.
The unweighted energy of a $d$-tuple of measures $\mu=(\mu_{1},\ldots,\mu_{d})$ is defined as
$$E(\mu):=\sum_{i,j=1}^{d}c_{i,j}I(\mu_{i},\mu_{j}),$$
where $I(\mu_{i},\mu_{j})$ is the mutual energy:
$$I(\mu_{i},\mu_{j})=\int\int \log\frac{1}{|z-t|}d\mu_{i}(z)d\mu_{j}(t).$$
Note that, with the above assumptions, $I(\mu_{i},\mu_{j})\in(-\infty,\infty)$ if $i\not = j$. 
The weighted energy of $\mu=(\mu_{1},\ldots,\mu_{d})$ is defined as
$$E_Q(\mu):=E(\mu)+2\sum_{i=1}^{d}\int Q_{i}d\mu_{i}.$$
We fix $r_1,\ldots,r_d>0$ and from now on we set 
\begin{equation*}
\mathcal M_r(K):=\{\mu=(\mu_{1},\ldots,\mu_{d}), \quad\mu_i\in \mathcal M_{r_i}(K_i), i=1,\ldots,d
\}.
\end{equation*}
We equip $\mathcal M_r(K)$ with the (component-wise) weak-* topology. If we need to keep track of the underlying interaction matrix $C$, we write a superscript $C$; e.g., 
$E^C$ and $E^C_Q$. Note since $C\geq 0$ and since $-\log$ and $Q$ are lowersemicontinuous functions, we have $E$ and $E_Q$ are lowersemicontinuous functionals on $\mathcal M_r(K)$ (see \cite[Chapter 5, Proposition 4.1]{NS} and \cite[Proposition 2.10]{BKMW} where the $K_{i}$ may intersect). 


From Theorem 1.8 of \cite{BKMW}, it is known in the unweighted case there exists a unique minimizing $d$-tuple of measures for the energy $E$ over $\mu \in \mathcal M_r(K)$ (for a positive definite $C$, the result is also proven in \cite[Chapter 5]{NS}). We write this measure as $\mu^K=(\mu_1^K,\ldots,\mu_d^K)$ and $E(\mu^K)=E^*$; in the weighted case there exists a unique minimizing tuple of measures for the energy $E_Q$ and we write this measure as $\mu^{K,Q}=(\mu_1^{K,Q},\ldots,\mu_d^{K,Q})$ and $E_Q(\mu^{K,Q})=E_Q^*$. Moreover, if we introduce the partial potentials
$$U^{\mu}_{i}=\sum_{j=1}^{d}c_{i,j}U^{\mu_{j}},\qquad i=1,\ldots,d,$$ 
it is proved in  \cite[Theorem 1.8]{BKMW} that a measure $\mu$ minimizes the weighted energy $E_{Q}$ if and only if there exist constants $F_{1},...,F_d$ such that
\begin{align}\label{eq-var1}
U^{\mu}_{i}(z) +Q_i& \geq F_{i},\quad \text{q.e. }z\in K_{i},\quad i=1,\ldots,d,\\
\label{eq-var2}
U^{\mu}_{i}(z) +Q_i& \leq F_{i},\quad \mu_{i}\text{-a.e. } z\in K_{i},\quad i=1,\ldots,d.
\end{align}

\section{Discretization of the vector energy}
Throughout this section, we continue with the same assumptions as above:
\begin{enumerate}
\item $C\geq 0$ and $C$ has no zero columns (or rows); $r_1,\ldots, r_d >0$;
\item $K_1,\ldots,K_d$ nonpolar with $K_i\cap K_j=\emptyset$, $i\neq j$;
\item $Q_1,\ldots,Q_d$ admissible.
\end{enumerate}

To discretize the vector energies $E$ and $E_{Q}$, for each $k=1,2,...$ we take a sequence of ordered tuples  $m_{k}=(m_{1,k},\ldots,m_{d,k})$ of positive integers with 
\begin{equation}\label{correct2}
m_{i,k}\uparrow \infty,\quad i=1,\ldots,d, \quad \hbox{and} \quad \lim_{k\to \infty}  \frac{m_{i,k}}{m_{{j,k}}}=\frac{r_i}{r_j},\quad i,j=1,\ldots,d.
\end{equation} 
Note with this hypothesis
\begin{equation}\label{correct3}\frac{r_i^2}{m_{i,k}^2}\asymp \frac{r_j^2}{m_{j,k}^2} \asymp \frac{r_ir_j}{m_{i,k}m_{j,k}},
\end{equation}
where the notation $a_{k}\asymp b_{k}$ stands for asymptotically equal, i.e. $a_{k}/b_{k}\to 1$ as $k\to\infty$. 
For a set of distinct points of the form
\begin{equation}\label{Zk}
{\bf Z_{k}}=\cup_{i=1}^{d}\{z_{i,1},\ldots,z_{i,m_{i,k}}\in K_{i}\},
\end{equation}
 let
\begin{equation}\label{vdm}
|VDM_k({\bf Z_{k}})|:=\prod_{i=1}^{d}\prod_{l<p}^{m_{i,k}}|z_{i,l}-z_{i,p}|^{c_{i,i}}
\cdot \prod_{i<j}^{d}\prod_{l=1}^{m_{i,k}}\prod_{p=1}^{m_{j,k}}|z_{i,l}-z_{j,p}|^{c_{i,j}}.
\end{equation}
We define a $k-$th order vector diameter with respect to $(m_{1,k},\ldots,m_{d,k})$ -- all that follows will be with respect to a sequence satisfying (\ref{correct2}) --  via
\begin{equation}\label{diam}
\delta^{(k)}(K):=\max_{{\bf Z_{k}}}\Big[|VDM_k({\bf Z_{k}})|\Big]^{2|r|^{2}/|m_{k}|(|m_{k}|-1)},
\end{equation}
where we set
$$|r|=r_{1}+\cdots+r_{d},\qquad |m_{k}|=m_{1,k}+\cdots+m_{d,k}.$$
Given a weight $Q=(Q_1,\ldots,Q_d)$, we define  
$$|VDM_k^Q({\bf Z_{k}})|:=|VDM_k({\bf Z_{k}})|\cdot 
\prod_{i=1}^{d}\prod_{l=1}^{m_{i,k}} e^{-\frac{m_{i,k}}{r_i}Q_i(z_{i,l})}$$ 
and we have the $k-$th order weighted vector diameter:
\begin{equation}\label{diam-w}
\delta^{(k)}_Q(K):=\max_{{\bf Z_{k}}}\Big[|VDM_k^Q({\bf Z_{k}})|\Big]^{2|r|^{2}/|m_{k}|(|m_{k}|-1)}.
\end{equation}
Note that, similarly to the classical scalar case, the factor $|m_{m}|(|m_{k}|-1)/2$ in the exponent of (\ref{diam}) and (\ref{diam-w}) corresponds to the number of factors in the product (\ref{vdm}).
Actually the $``-1"$ in $|m_{m}|(|m_{k}|-1)/2$ could be dropped but with it the formulas reduce to those in the scalar case.

We start with a general result which will also be used in section 6. The proof is similar to the classical (scalar) case; cf., \cite{ST}.
\begin{proposition}\label{wup}
Take a sequence $\{(m_{1,k},\ldots,m_{d,k})\}$ satisfying (\ref{correct2}) and $\mu=(\mu_1,\dots,\mu_d)\in \mathcal M_r(K)$. Let 
$$\mu^k=(\mu_1^k,\ldots,\mu_d^k):=
(\frac{r_1}{m_{1,k}}\sum_{j=1}^{m_{1,k}} \delta_{z_{1,j}^{(k)}},\ldots,
\frac{r_d}{m_{d,k}}\sum_{j=1}^{m_{d,k}} \delta_{z_{d,j}^{(k)}})$$
be a sequence of discrete measures in $\mathcal M_r(K)$ associated to the array 
$${\bf Z_{k}}=\cup_{i=1}^{d}\{z_{i,1}^{(k)},\ldots,z_{i,m_{i,k}}^{(k)}\in K_{i}\},$$
with $\mu^k\to \mu$ weak-*. Then
\begin{equation}
\label{ineg-mu}
\limsup_{k\to \infty} |VDM_k({\bf Z_k})|^{2|r|^{2}/|m_{k}|(|m_{k}|-1)}\leq e^{-E(\mu)}.
\end{equation}
In the weighted case,
\begin{equation}
\label{ineg-mu-Q}
\limsup_{k\to \infty} |VDM_k^Q({\bf Z_k})|^{2|r|^{2}/|m_{k}|(|m_{k}|-1)}\leq e^{-E_Q(\mu)}.
\end{equation}
\end{proposition}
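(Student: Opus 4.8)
The plan is to follow the classical (scalar) upper-bound argument from the theory of transfinite diameter, adapted to the vector setting. The starting point is to rewrite the logarithm of $|VDM_k(\mathbf{Z_k})|$ in terms of a double sum of $-\log|z-t|$ evaluated on the discrete measures $\mu^k_i$. Concretely, using the definitions one has
\[
\log |VDM_k(\mathbf{Z_k})| = -\sum_{i=1}^d c_{i,i}\sum_{l<p} \log\frac{1}{|z^{(k)}_{i,l}-z^{(k)}_{i,p}|} - \sum_{i<j} c_{i,j}\sum_{l,p}\log\frac{1}{|z^{(k)}_{i,l}-z^{(k)}_{j,p}|},
\]
and after multiplying by $2|r|^2/(|m_k|(|m_k|-1))$ and using (\ref{correct2})--(\ref{correct3}), the coefficients $r_i r_j/(m_{i,k}m_{j,k})$ (up to the asymptotic factor $|m_k|^2/(|m_k|(|m_k|-1))\to 1$) convert the sums into $\sum_{i,j} c_{i,j} \iint \log\frac{1}{|z-t|}\,d\mu^k_i(z)\,d\mu^k_j(t)$, i.e. into $E(\mu^k)$, with the caveat that on the diagonal terms $i=j$ one is missing the $l=p$ contributions. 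Since $-\log|z-t|$ blows up to $+\infty$ on the diagonal, dropping those terms is exactly the right direction for an \emph{upper} bound, but the missing mass must be controlled so it does not destroy convergence.

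The key device for this is truncation. For $M>0$ set $\log^M\frac{1}{|z-t|} := \min\big(M,\log\frac{1}{|z-t|}\big)$, a bounded continuous function, and $I^M(\mu,\nu):=\iint \log^M\frac{1}{|z-t|}\,d\mu\,d\nu$. Then $\log\frac{1}{|z-t|}\ge \log^M\frac{1}{|z-t|}$ always, so replacing the true kernel by the truncated one only increases the expression we are bounding from above; and for the off-diagonal blocks $i\ne j$ the sets $K_i,K_j$ are disjoint and compact, hence $\log\frac{1}{|z-t|}$ is already bounded there and no truncation is needed. For the diagonal blocks, the sum $\frac{r_i^2}{m_{i,k}^2}\sum_{l<p}\log\frac{1}{|z_{i,l}-z_{i,p}|}$ is bounded below by $\frac{r_i^2}{m_{i,k}^2}\sum_{l<p}\log^M\frac{1}{|z_{i,l}-z_{i,p}|}$, and now adding back the diagonal $l=p$ terms costs at most $\frac{r_i^2}{m_{i,k}^2}\cdot m_{i,k}\cdot M = \frac{r_i^2 M}{m_{i,k}}\to 0$. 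Hence up to an $o(1)$ error,
\[
\limsup_{k\to\infty} |VDM_k(\mathbf{Z_k})|^{2|r|^2/(|m_k|(|m_k|-1))} \le \exp\Big(-\sum_{i,j} c_{i,j} \lim_{k} I^{M}_{i,j}(\mu^k)\Big),
\]
where I write $I^M_{i,j}$ for the (truncated on the diagonal, untruncated off-diagonal) mutual energy. Because each such integrand is bounded and continuous and $\mu^k\to\mu$ weak-$*$ componentwise (so the product measures $\mu^k_i\times\mu^k_j$ converge weak-$*$), each term converges to the corresponding integral against $\mu_i\times\mu_j$. This gives
\[
\limsup_{k\to\infty} |VDM_k(\mathbf{Z_k})|^{2|r|^2/(|m_k|(|m_k|-1))} \le \exp\Big(-\sum_{i,j} c_{i,j}\, I^M_{i,j}(\mu)\Big)
\]
for every $M$. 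Finally let $M\to\infty$: by the monotone convergence theorem $I^M_{i,j}(\mu)\uparrow I(\mu_i,\mu_j)$ on the diagonal (and the off-diagonal terms are untouched), and since $C\ge 0$ the combination $\sum_{i,j}c_{i,j} I^M_{i,j}(\mu)$ increases to $E(\mu)$ — this is where positive semidefiniteness is used, to guarantee the limit is $E(\mu)$ rather than something with the wrong sign; here one should note that if $E(\mu)=+\infty$ the bound $e^{-E(\mu)}=0$ is still the correct (trivial) conclusion. This yields (\ref{ineg-mu}).

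For the weighted statement (\ref{ineg-mu-Q}), I would write $\log|VDM_k^Q(\mathbf{Z_k})| = \log|VDM_k(\mathbf{Z_k})| - \sum_{i=1}^d \frac{m_{i,k}}{r_i}\sum_{l=1}^{m_{i,k}} Q_i(z^{(k)}_{i,l})$, and after scaling by $2|r|^2/(|m_k|(|m_k|-1))$ the second group becomes, up to $o(1)$, $2\sum_i \int Q_i^M \,d\mu^k_i$ where $Q_i^M:=\min(Q_i,M)$ is a bounded lowersemicontinuous, hence (after a further inner approximation by continuous functions from below, or directly via the portmanteau theorem for l.s.c. functions under weak-$*$ convergence) one gets $\liminf_k \int Q_i^M d\mu^k_i \ge \int Q_i^M d\mu_i$, which again goes the right way for an upper bound on $|VDM_k^Q|$. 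Combining with the unweighted estimate and letting $M\to\infty$ using monotone convergence on the $Q_i$ as well delivers $\limsup_k |VDM_k^Q(\mathbf{Z_k})|^{2|r|^2/(|m_k|(|m_k|-1))} \le e^{-E_Q(\mu)}$.

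The routine parts are the bookkeeping with the asymptotic factors from (\ref{correct2})--(\ref{correct3}) and the diagonal correction terms. The main obstacle — and the only genuinely delicate point — is the interplay between the truncation level $M$ and the negative entries $c_{i,j}$ of $C$: one must be careful that when some $c_{i,j}<0$, replacing $\log\frac{1}{|z-t|}$ by $\log^M\frac{1}{|z-t|}$ in the $(i,j)$-block changes the expression in the \emph{wrong} direction. The saving grace, which must be invoked explicitly, is that for $i\ne j$ the compacta $K_i$ and $K_j$ are disjoint, so $\log\frac{1}{|z-t|}$ is bounded and continuous on $K_i\times K_j$ and \emph{no truncation is applied there at all}; truncation is confined to the diagonal blocks, where $c_{i,i}\ge 0$ (a consequence of $C$ being positive semidefinite with no zero column) so the inequality has the right sign. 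Making this separation of the diagonal and off-diagonal blocks clean, and checking that the weak-$*$ convergence of the $\mu^k_i\times\mu^k_j$ really does pass to the limit termwise for the bounded continuous off-diagonal kernels, is the heart of the argument.
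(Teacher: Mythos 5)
Your proof is correct and follows essentially the same route as the paper's: truncating the logarithmic kernel at level $M$ on the diagonal blocks (where $c_{i,i}\geq 0$), absorbing the $l=p$ terms at cost $O(M/m_{i,k})\to 0$, passing to the limit via weak-$*$ convergence of the product measures (with exact limits on the off-diagonal blocks since the $K_i$ are disjoint), letting $M\to\infty$, and invoking lower semicontinuity of the $Q_i$ for the weighted case. Your explicit remark that truncation is confined to the diagonal blocks, so the possibly negative off-diagonal $c_{i,j}$ cause no sign problem, is a point the paper leaves implicit but is exactly the right observation.
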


\begin{proof} We have $\mu_i^k\times \mu_j^k\to \mu_i\times \mu_j$ weak-* for $i,j=1,\ldots,d$. Furthermore, the function $(a,b)\to \log \frac{1}{|a-b|}$ is lowersemicontinuous. 
For a real number $M$ let 
	$$h_M(z,t):=\min [M, \log \frac {1}{|z-t|}]\leq \log \frac {1}{|z-t|}.$$
	Then, for $i=1,\ldots,d$, we have
$$I(\mu_i)=\lim_{M\to \infty}\int_{K_i} \int_{K_i} h_{M}(z,t)d\mu_i(z)d\mu_i(t)$$
$$=\lim_{M\to \infty}\lim_{k \to \infty}\int_{K_i} \int_{K_i} h_{M}(z,t)d\mu_i^k(z)d\mu_i^k(t).$$
Now  
$$h_{M}(z_{i,l}^{(k)},z_{i,p}^{(k)})\leq \log \frac {1}{|z_{i,l}^{(k)}-z_{i,p}^{(k)}|},$$
if $l\not = p$ and hence
$$\int_{K_i}\int_{K_i} h_{M}(z,t)d\mu_i^k(z)d\mu_i^k(t) \leq 
\frac{r_i^2}{m_{i,k}^{2}}\left(m_{i,k}M +\sum_{l\not = p}\log \frac {1}{|z_{i,l}^{(k)}-z_{i,p}^{(k)}|}\right).$$
Consequently,
\begin{align}\notag
I(\mu_i) & \leq \lim_{M\to \infty}\liminf_{k \to \infty} \frac{r_i^2}{m_{i,k}^{2}}\left({m_{i,k}}M +
\sum_{l\not = p}\log \frac {1}{|z_{i,l}^{(k)}-z_{i,p}^{(k)}|}\right)\\\label{ineg-Ii}
 & =\liminf_{k \to \infty}\frac{r_i^2}{m_{i,k}^{2}}\sum_{l\not = p}\log \frac {1}{|z_{i,l}^{(k)}-z_{i,p}^{(k)}|}.
 \end{align}
Finally, since $K_i\cap K_j =\emptyset$, $i\neq j$, from $\mu_i^k\times \mu_j^k\to \mu_i\times \mu_j$ weak-* we have
\begin{equation}
\label{ineg-Iij}
I(\mu_i,\mu_j)=\lim_{k\to \infty} I(\mu_i^k,\mu_j^k)=\lim_{k\to \infty}\frac{r_ir_j}{m_{i,k}m_{j,k}}\sum_{l=1}^{m_{i,k}}\sum_{p=1}^{m_{j,k}}\log \frac {1}{|z_{i,l}^{(k)}-z_{j,p}^{(k)}|}.
\end{equation}
Putting estimates (\ref{ineg-Ii}) and (\ref{ineg-Iij}) for $i,j=1,\ldots,d$, together gives
$$\limsup_{k \to \infty}\sum_{i=1}^{d}\frac{r_i^2}{m_{i,k}^{2}}\sum_{l\not = p}\log {|z_{i,l}^{(k)}-z_{i,p}^{(k)}|^{c_{i,i}}}+
\lim_{k\to \infty}\sum_{i\neq j}\frac{r_ir_j}{m_{i,k}m_{j,k}}\sum_{l=1}^{m_{i,k}}\sum_{p=1}^{m_{j,k}}\log {|z_{i,l}^{(k)}-z_{j,p}^{(k)}|^{c_{i,j}}}\leq -E(\mu).$$
Then, using (\ref{correct2}) and (\ref{correct3}) leads to
$$\limsup_{k\to \infty}\frac{2|r|^{2}}{|m_{k}|(|m_{k}|-1)}\log |VDM_k({\bf Z_k})|\leq -E(\mu),$$
which proves (\ref{ineg-mu}).

The weighted case (\ref{ineg-mu-Q}) follows from the unweighted case, (\ref{correct2}) and (\ref{correct3}), and lower semicontinuity of $Q_1,\ldots,Q_d$.
\end{proof}

\begin{proposition} \label{eetd} In the unweighted case,
$$\delta(K):=\lim_{k\to \infty} \delta^{(k)}(K)=e^{-E^*}=e^{-E(\mu^K)}$$
and in the weighted case,
$$\delta_Q(K):=\lim_{k\to \infty} \delta^{(k)}_Q(K)=e^{-E_Q^*}=e^{-E_Q(\mu^{K,Q})}.$$
\end{proposition}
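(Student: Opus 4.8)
The plan is to prove the unweighted statement by establishing the two matching inequalities $\limsup_{k\to\infty}\delta^{(k)}(K)\le e^{-E^*}$ and $\liminf_{k\to\infty}\delta^{(k)}(K)\ge e^{-E^*}$, and then to repeat the argument verbatim in the weighted case. For the upper bound I would feed maximizing configurations into Proposition~\ref{wup}: for each $k$ choose ${\bf Z_k}$ attaining the maximum in (\ref{diam}); since each $\mathcal M_{r_i}(K_i)$ is weak-* compact, after passing to a subsequence the associated discrete measures $\mu^k$ converge weak-* to some $\mu\in\mathcal M_r(K)$, and along a further subsequence $\delta^{(k)}(K)=|VDM_k({\bf Z_k})|^{2|r|^2/(|m_k|(|m_k|-1))}$ converges to $\limsup_k\delta^{(k)}(K)$. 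Proposition~\ref{wup} then gives $\limsup_k\delta^{(k)}(K)\le e^{-E(\mu)}\le e^{-E^*}$, the last step being the definition of $E^*$ as the minimum of $E$ on $\mathcal M_r(K)$.

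For the lower bound I would use a first-moment argument against the equilibrium measure. Put $\hat\mu^K_i:=\mu^K_i/r_i\in\mathcal M(K_i)$ and integrate $\log|VDM_k(\cdot)|$, viewed as a function of $|m_k|$ points, against $\prod_{i=1}^d(\hat\mu^K_i)^{\otimes m_{i,k}}$. Since $\log|VDM_k|$ is a finite sum of terms $c_{i,i}\log|z_{i,l}-z_{i,p}|$ with $l<p$ and $c_{i,j}\log|z_{i,l}-z_{j,p}|$ with $i<j$, since each $\mu^K_i$ has finite logarithmic energy and hence no atoms (finiteness of $I(\mu^K_i)$ uses $c_{i,i}>0$, valid because $C\ge 0$ has no zero column, together with finiteness of the off-diagonal energies coming from $\mathrm{dist}(K_i,K_j)>0$), and since $K_i\cap K_j=\emptyset$, this integral is a finite number equal to
$$-\sum_{i=1}^d c_{i,i}\binom{m_{i,k}}{2}\frac{I(\mu^K_i)}{r_i^2}-\sum_{i<j}c_{i,j}\,\frac{m_{i,k}m_{j,k}}{r_ir_j}\,I(\mu^K_i,\mu^K_j).$$
Hence there is a configuration ${\bf Z_k}$ of distinct points — points in distinct $K_i$ are automatically distinct, and coincidences inside a single $K_i$ form a null set for the product measure — with $\log|VDM_k({\bf Z_k})|$ at least this value. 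Multiplying by $2|r|^2/(|m_k|(|m_k|-1))$ and invoking (\ref{correct2})--(\ref{correct3}), which give $m_{i,k}(m_{i,k}-1)/(|m_k|(|m_k|-1))\to r_i^2/|r|^2$ and $m_{i,k}m_{j,k}/(|m_k|(|m_k|-1))\to r_ir_j/|r|^2$, the right-hand side tends to $-(\sum_i c_{i,i}I(\mu^K_i)+2\sum_{i<j}c_{i,j}I(\mu^K_i,\mu^K_j))=-E(\mu^K)=-E^*$. So $\liminf_k\delta^{(k)}(K)\ge e^{-E^*}$, and with the upper bound this proves the unweighted case.

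The weighted case is identical, using the weighted half of Proposition~\ref{wup} for the upper bound and, for the lower bound, integrating $\log|VDM_k^Q|=\log|VDM_k|-\sum_{i=1}^d\frac{m_{i,k}}{r_i}\sum_{l=1}^{m_{i,k}}Q_i(z_{i,l})$ against $\prod_{i=1}^d(\mu^{K,Q}_i/r_i)^{\otimes m_{i,k}}$. One checks $\int Q_i\,d\mu^{K,Q}_i$ is finite: each $Q_i$ is bounded below on the compact $K_i$ by lower semicontinuity, and $2\sum_i\int Q_i\,d\mu^{K,Q}_i=E_Q^*-E(\mu^{K,Q})<\infty$ since $E$ is bounded below on $\mathcal M_r(K)$ and $E_Q^*$ is finite (a standard consequence of admissibility). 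Then $\mu^{K,Q}_i$ gives no mass to $\{Q_i=+\infty\}$, the extra term contributes $2\sum_i\int Q_i\,d\mu^{K,Q}_i$ in the limit, and the bound becomes $-E_Q(\mu^{K,Q})=-E_Q^*$.

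I expect the only genuinely delicate point to be justifying, in the first-moment argument, that $\log|VDM_k|$ (resp.\ $\log|VDM_k^Q|$) is integrable and almost everywhere finite for the product equilibrium measure — this is precisely where the unboundedness of $\log\frac1{|z-t|}$ on the diagonals of the sets $K_i\times K_i$ must be controlled, and it is handled exactly as in the scalar theory (cf.\ \cite{ST}) using that $\mu^K_i$ (resp.\ $\mu^{K,Q}_i$) has finite energy and charges no diagonal. The rest is routine bookkeeping, checking that the normalization $2|r|^2/(|m_k|(|m_k|-1))$ matches the number $|m_k|(|m_k|-1)/2$ of factors in (\ref{vdm}).
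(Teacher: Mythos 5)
Your proposal is correct and follows essentially the same route as the paper: the upper bound by feeding maximizing (Fekete) configurations into Proposition~\ref{wup} after extracting a weak-* limit, and the lower bound by averaging $\log|VDM_k|$ against a product measure built from a tuple in $\mathcal M_r(K)$ (the paper integrates against an arbitrary $\sigma\in\mathcal M_r(K)$ and then specializes, while you plug in $\mu^K$, resp.\ $\mu^{K,Q}$, directly --- a cosmetic difference). Your ancillary checks ($c_{i,i}>0$ from positive semidefiniteness with no zero column, finiteness of $I(\mu_i^{K})$ and of $\int Q_i\,d\mu_i^{K,Q}$) are all sound.
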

\begin{proof} We prove the unweighted case; the weighted case is similar.  
First observe that if we take any points
$${\bf Z_{k}}=\cup_{i=1}^{d}\{z_{i,1}^{(k)},\ldots,z_{i,m_{i,k}}^{(k)}\in K_{i}\},$$
then
$$-\frac{|m_k|(|m_{k}|-1)}{2|r|^2} \log \delta^{(k)}(K)\leq-\log |VDM_k
({\bf Z_{k}})|$$
$$=\sum_{i=1}^{d}c_{i,i}\sum_{l<p}^{m_{i,k}}\log \frac{1}{|z_{i,l}-z_{i,p}|}
+\sum_{i<j}^{d}c_{i,j} \sum_{l=1}^{m_{i,k}}\sum_{p=1}^{m_{j,k}}\log \frac{1}{|z_{i,l}-z_{j,p}|}.$$
Given any $\sigma=(\sigma_1,\ldots,\sigma_d)=(r_1\overline \sigma_1, \ldots, r_d \overline \sigma_d) \in \mathcal M_r(K)$ where $\overline \sigma_i\in \mathcal M_1(K_i)$, $i=1,\ldots,d$, if we integrate with respect to the probability measure
$$\prod_{i=1}^{d}\prod_{l<p}^{m_{i,k}}d\overline \sigma_i(z_{i,l})d\overline \sigma_i(z_{i,p}) \cdot  
\prod_{i<j}^{d}\prod_{l=1}^{m_{i,k}}\prod_{p=1}^{m_{j,k}}d\overline \sigma_i(z_{i,l})d\overline \sigma_j(z_{j,p})$$
we get
$$-\frac{|m_k|(|m_{k}|-1)}{2|r|^2} \log \delta^{(k)}(K)
\leq\sum_{i=1}^{d}c_{i,i}\frac{m_{i,k}(m_{i,k}-1)}{2}I(\overline \sigma_i)
+\sum_{i<j}c_{i,j}m_{i,k}m_{j,k}I(\overline \sigma_i,\overline \sigma_j)$$
$$=\sum_{i=1}^{d}c_{i,i}\frac{m_{i,k}(m_{i,k}-1)}{2r_{i}^{2}}I(\sigma_i)
+\sum_{i<j}c_{i,j}\frac{m_{i,k}m_{j,k}}{r_{i}r_{j}}I(\sigma_i,\sigma_j).$$
Then we use (\ref{correct3}) to obtain 
\begin{equation}\label{ineq-sigma}
e^{-E(\sigma)}\leq\liminf_{k\to \infty} \Big(\delta^{(k)}(K)\Big).
\end{equation}

Next, let
$${\bf Z_{k}}=\cup_{i=1}^{d}\{z_{i,1}^{(k)},\ldots,z_{i,m_{i,k}}^{(k)}\in K_{i}\},$$
be a Fekete array of order $k$; i.e., achieving the maximum for $\delta^{(k)}(K)$ in (\ref{diam}). Letting $\mu=(\mu_1,\ldots,\mu_d) \in \mathcal M_r(K)$ be any weak-* limit of the sequence of Fekete measures
$$\mu^k:=\Big(\frac{r_1}{m_{1,k}}\sum_{j=1}^{m_{1,k}} \delta_{z_{1,j}^{(k)}},\ldots,
\frac{r_d}{m_{d,k}}\sum_{j=1}^{m_{d,k}} \delta_{z_{d,j}^{(k)}}\Big),$$
Proposition \ref{wup} gives
$$\limsup_{k\to \infty} \Big[\delta^{(k)}(K)\Big]\leq e^{-E(\mu)}.$$
Thus, with (\ref{ineq-sigma}),
$$E(\mu)\leq\lim_{k\to \infty} \Big[-\log \delta^{(k)}(K)\Big]\leq E(\sigma),$$
for any $\sigma$. Hence the limit exists and equals the energy of any weak-* limit $\mu$ of Fekete measures. Since there exists a unique minimizing measure in $\mathcal M_r(K)$ for $E$, we have $\mu=\mu^{K}$ and $\lim_{k\to \infty}\delta^{(k)}(K)=e^{-E(\mu^{K})}$.
\end{proof}

Note that our definition of the $k-$th order (weighted) diameter is relative to $m_k$, but the proof shows that the (weighted) transfinite diameter $\delta(K)$ ($\delta_Q(K)$) is independent of the sequence $m_k$ satisfying (\ref{correct2}). 

 The proof of Proposition \ref{eetd} included the result that (weighted) Fekete measures $\mu^k$ converge weak-* to the (weighted) energy minimizing measure $\mu^K$ ($\mu^{K,Q}$). Indeed, the proof shows the result for {\it asymptotic} (weighted) Fekete measures:
\begin{proposition}\label{awf} In the unweighted case, for an array 
$${\bf Z_{k}}=\cup_{i=1}^{d}\{z_{i,1}^{(k)},\ldots,z_{i,m_{i,k}}^{(k)}\in K_{i}\},$$
 if
$$\lim_{k\to \infty} |VDM_k({\bf Z_k})|^{{2|r|^{2}}/{|m_k|(|m_{k}|-1)}}=e^{-E^*}$$
then 
$$\mu^k:=\Big(\frac{r_1}{m_{1,k}}\sum_{j=1}^{m_{1,k}} \delta_{z_{1,j}^{(k)}},\ldots,
\frac{r_d}{m_{d,k}}\sum_{j=1}^{m_{d,k}} \delta_{z_{d,j}^{(k)}}\Big)
\to \mu^K \ \hbox{weak}-*$$
and in the weighted case, if
$$\lim_{k\to \infty} |VDM_k^Q({\bf Z_k})|^{{2|r|^{2}}/{|m_k|(|m_{k}|-1)}}=e^{-E_Q^*}$$
then 
$$\mu^k:=\Big(\frac{r_1}{m_{1,k}}\sum_{j=1}^{m_{1,k}} \delta_{z_{1,j}^{(k)}},\ldots,
\frac{r_d}{m_{d,k}}\sum_{j=1}^{m_{d,k}} \delta_{z_{d,j}^{(k)}}\Big)
\to \mu^{K,Q} \ \hbox{weak}-*.$$

\end{proposition}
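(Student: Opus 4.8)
The plan is to run the compactness-plus-uniqueness argument that already appears at the end of the proof of Proposition \ref{eetd}, but now starting from an arbitrary array with the asymptotic Fekete property instead of from genuine Fekete arrays. First I would record that $\mathcal M_r(K)=\prod_{i=1}^{d}\mathcal M_{r_i}(K_i)$ is compact in the component-wise weak-* topology, since each $K_i$ is compact. Hence the sequence $\{\mu^k\}$ has weak-* cluster points, and it suffices to show that every such cluster point equals $\mu^K$ (resp. $\mu^{K,Q}$); convergence of the full sequence to $\mu^K$ then follows by the usual subsequence criterion.

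So let $\mu=(\mu_1,\dots,\mu_d)\in\mathcal M_r(K)$ be a weak-* limit of some subsequence $\{\mu^{k_l}\}_l$. Reindexing by $l$, the tuples $(m_{1,k_l},\dots,m_{d,k_l})$ still satisfy (\ref{correct2}), so Proposition \ref{wup} applies along this subsequence and gives
$$\limsup_{l\to\infty}|VDM_{k_l}({\bf Z_{k_l}})|^{2|r|^{2}/|m_{k_l}|(|m_{k_l}|-1)}\leq e^{-E(\mu)}.$$
But by hypothesis the full sequence of these quantities converges to $e^{-E^*}$, so the left-hand side equals $e^{-E^*}$ and therefore $E(\mu)\leq E^*$. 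Since $E^*=\min_{\nu\in\mathcal M_r(K)}E(\nu)$ with a unique minimizer (\cite[Theorem 1.8]{BKMW}, as used in Proposition \ref{eetd}), we conclude $\mu=\mu^K$. As every subsequence of $\{\mu^k\}$ thus has a further subsequence converging weak-* to $\mu^K$, the whole sequence converges to $\mu^K$.

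The weighted case is verbatim the same: replace (\ref{ineg-mu}) by the weighted bound (\ref{ineg-mu-Q}) of Proposition \ref{wup}, use the weighted statement of Proposition \ref{eetd}, and invoke uniqueness of $\mu^{K,Q}$. There is no genuine obstacle here — the only points requiring care are checking that the $\limsup$ inequality of Proposition \ref{wup} is legitimately invoked along a subsequence (which it is, since (\ref{correct2}) is inherited by subsequences) and noting that the inequality $E(\mu)\leq E^*$ together with uniqueness of the minimizer forces $\mu=\mu^K$ rather than merely $E(\mu)=E^*$.
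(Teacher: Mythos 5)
Your argument is correct and is essentially the paper's own proof: both take an arbitrary weak-* cluster point $\sigma$ of $\{\mu^k\}$, apply Proposition \ref{wup} (along the relevant subsequence) to deduce $E(\sigma)\leq E^*$, and conclude $\sigma=\mu^K$ from uniqueness of the minimizer, the weighted case being identical. The only difference is that you spell out the compactness of $\mathcal M_r(K)$ and the subsequence-of-a-subsequence criterion, which the paper leaves implicit.
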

\begin{proof} We prove the unweighted case; the weighted case is similar. Let $\sigma=(\sigma_1,\ldots,\sigma_d) \in \mathcal M_r(K)$ be any weak-* limit of the sequence of measures $\mu^k$. The proof of Proposition \ref{eetd} shows that 
$$E^*=\limsup_{k\to \infty} \Big(-\log \delta^{(k)}(K)\Big)\leq E(\sigma);$$
then Proposition \ref{wup} gives 
$$E(\sigma)\leq \liminf_{k\to \infty} \Big[{\frac{-2|r|^{2}}{|m_k|(|m_{k}|-1)}}
\log|VDM_k({\bf Z_{k}})|\Big]=E^*.$$
Thus 
$$E^*=E(\sigma)$$
so that $\sigma$ minimizes $E$ over all $\mu \in \mathcal M_r(K)$. Since there  exists a unique minimizer for $E$, we are done.

\end{proof}

\noindent Again, if we need to keep track of the underlying interaction matrix $C$, we write
$$-\log \delta^C(K)=(E^C)^* \ \hbox{and} \ -\log \delta_Q^C(K)=(E^C_Q)^*.$$
Occasionally we may write $VDM_k^{(C)}$ as well. If $\alpha \in \C\setminus \{0\}$, then $K_i\cap K_j=\emptyset$ implies $\alpha K_i\cap \alpha K_j=\emptyset$ and we have, using the definitions of $\delta^{(k)}(K)$ and $\delta^{(k)}_Q(K)$ together with (\ref{correct2}) and (\ref{correct3}), the scaling relations 
\begin{equation}\label{scal}\delta^C(\alpha K) = |\alpha|^{B}\delta^C(K) \ \hbox{and} \ \delta_Q^C(\alpha K) = |\alpha|^{B}\delta_Q^C(K)\end{equation}
where 
$$B=B(C,r)= \sum_{i,j=1}^dc_{i,j}r_ir_j\geq 0.$$
Note that $B$ is independent of the sequence $m_k$ used to define the $k-$th order diameters.

We use Proposition \ref{eetd} and (\ref{scal}) to prove an important continuity property of the (weighted) vector transfinite diameter.

\begin{proposition}\label{tdlim} Given $C:=(c_{i,j})_{i,j=1}^{d}$
we can find $C^{(k)}:=(c_{i,j}^{(k)})_{i,j=1}^{d}$
symmetric positive semidefinite with all entries $c^{(k)}_{i,j}$ rational, $C^{(k)}\to C$ componentwise, and 
$$\lim_{k\to \infty}\delta^{C^{(k)}}(K)=\delta^C(K)\ \hbox{and} \ \lim_{k\to \infty}\delta_Q^{C^{(k)}}(K)=\delta_Q^C(K).$$
\end{proposition}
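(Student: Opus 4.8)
The plan is to reduce everything, via Proposition~\ref{eetd}, to the convergence of the minimal energies: since $\delta^{C}(K)=e^{-(E^{C})^{*}}$, $\delta^{C}_{Q}(K)=e^{-(E^{C}_{Q})^{*}}$, and likewise for $C^{(k)}$, it is enough to exhibit the $C^{(k)}$ and then prove $(E^{C^{(k)}})^{*}\to(E^{C})^{*}$ and $(E^{C^{(k)}}_{Q})^{*}\to(E^{C}_{Q})^{*}$. For the construction, observe first that since $C$ is positive semidefinite with no zero column, each diagonal entry $c_{i,i}$ is strictly positive (if $c_{i,i}=0$ then the $2\times2$ principal minor through position $(i,j)$ equals $-c_{i,j}^{2}\ge0$, forcing the $i$-th row and column to vanish). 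For each $k$ the matrix $C+\frac1kI$ is positive definite; as positive definiteness is an open condition on symmetric matrices and rational symmetric matrices are dense, we may choose a rational symmetric positive definite $C^{(k)}$ with $\|C^{(k)}-(C+\frac1kI)\|_{\infty}<\frac1k$, so that $C^{(k)}\to C$ componentwise; being positive definite, each $C^{(k)}$ is positive semidefinite with no zero column, so $\delta^{C^{(k)}}(K)$ and $\delta^{C^{(k)}}_{Q}(K)$ are well defined.

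The upper bound comes from plugging the fixed minimizer $\mu^{K}$ of $E^{C}$ into $E^{C^{(k)}}$. Since the $K_{i}$ are pairwise disjoint, all mutual energies $I(\mu^{K}_{i},\mu^{K}_{j})$ with $i\ne j$ are finite, and the diagonal ones are finite because $\mu^{K}$ has finite energy; hence
$$(E^{C^{(k)}})^{*}\le E^{C^{(k)}}(\mu^{K})=\sum_{i,j=1}^{d}c_{i,j}^{(k)}I(\mu^{K}_{i},\mu^{K}_{j})\longrightarrow\sum_{i,j=1}^{d}c_{i,j}I(\mu^{K}_{i},\mu^{K}_{j})=(E^{C})^{*},$$
so $\limsup_{k}(E^{C^{(k)}})^{*}\le(E^{C})^{*}$; in particular the quantities $(E^{C^{(k)}})^{*}$ are bounded above.

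For the matching lower bound, let $\mu^{(k)}$ be the minimizer of $E^{C^{(k)}}$. Passing to a subsequence realizing $\liminf_{k}(E^{C^{(k)}})^{*}$ and, using weak-$*$ compactness of $\mathcal M_{r}(K)$, a further subsequence along which $\mu^{(k)}\to\mu\in\mathcal M_{r}(K)$ weak-$*$, write
$$E^{C^{(k)}}(\mu^{(k)})=E^{C}(\mu^{(k)})+\sum_{i,j=1}^{d}\bigl(c_{i,j}^{(k)}-c_{i,j}\bigr)I(\mu^{(k)}_{i},\mu^{(k)}_{j}).$$
Lower semicontinuity of the fixed functional $E^{C}$ on $\mathcal M_{r}(K)$ (\cite{NS, BKMW, ST}) gives $\liminf_{k}E^{C}(\mu^{(k)})\ge E^{C}(\mu)$. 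In the correction sum the off-diagonal terms tend to $0$ because $\log\frac1{|z-t|}$ is continuous and bounded on each $K_{i}\times K_{j}$, $i\ne j$; the diagonal ones tend to $0$ provided $I(\mu^{(k)}_{i},\mu^{(k)}_{i})$ stays bounded, and it does: it is bounded below since $\log\frac1{|z-t|}$ is bounded below on the compact $K_{i}\times K_{i}$, and bounded above because, $\mu^{(k)}$ being a minimizer, $E^{C^{(k)}}(\mu^{(k)})$ is bounded above by the previous paragraph, whence subtracting the bounded off-diagonal part leaves $\sum_{i}c_{i,i}^{(k)}I(\mu^{(k)}_{i},\mu^{(k)}_{i})$ bounded above, and each summand is bounded below with $c_{i,i}^{(k)}\to c_{i,i}>0$. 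Thus the correction sum vanishes and $\liminf_{k}(E^{C^{(k)}})^{*}=\liminf_{k}E^{C}(\mu^{(k)})\ge E^{C}(\mu)\ge(E^{C})^{*}$, which with the upper bound yields $(E^{C^{(k)}})^{*}\to(E^{C})^{*}$, i.e. $\delta^{C^{(k)}}(K)\to\delta^{C}(K)$. The weighted case is identical after replacing $E^{C}$ by $E^{C}_{Q}=E^{C}+2\sum_{i}\int Q_{i}\,d\mu_{i}$: the added term does not involve $C$, is lower semicontinuous in $\mu$, and is bounded below (admissible weights are lower semicontinuous, hence bounded below on each compact $K_{i}$), so all the estimates persist, with the finiteness of $E^{C}_{Q}(\mu^{K,Q})$ also using $\int Q_{i}\,d\mu^{K,Q}_{i}>-\infty$.

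The one step that is not routine is the a priori boundedness of the diagonal self-energies $I(\mu^{(k)}_{i},\mu^{(k)}_{i})$: without it the coefficients $c_{i,i}^{(k)}-c_{i,i}$, though tending to $0$, could be multiplied by self-energies diverging to $+\infty$. This boundedness is what forces the two halves of the argument to be carried out in the above order, since it relies on the minimality of $\mu^{(k)}$ together with the easy upper bound for $(E^{C^{(k)}})^{*}$.
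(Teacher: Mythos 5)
Your proof is correct, but it takes a genuinely different route from the paper for the harder half of the argument. Both proofs reduce to the minimal energies via Proposition~\ref{eetd} and both get $\limsup_k (E^{C^{(k)}})^*\le (E^C)^*$ by evaluating $E^{C^{(k)}}$ at the fixed minimizer $\mu^K$ (resp.\ $\mu^{K,Q}$). For the reverse inequality the paper does not use compactness at all: it chooses the rational entries monotonically (diagonal entries strictly increased, off-diagonal perturbations small relative to that increase so as to preserve positive semidefiniteness) and rescales via (\ref{scal}) so that all the sets lie in a disk of radius $1/2$, making every $I(\mu_i,\mu_j)$ nonnegative; this yields the pointwise domination $E^C\le E^{C^{(k)}}$ on $\mathcal M_r(K)$ and hence $(E^C)^*\le (E^{C^{(k)}})^*$ with no further work. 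You instead extract a weak-$*$ convergent subsequence of the minimizers $\mu^{(k)}$, invoke lower semicontinuity of the fixed functional $E^C$, and kill the correction term $\sum(c^{(k)}_{i,j}-c_{i,j})I(\mu^{(k)}_i,\mu^{(k)}_j)$ by proving an a priori two-sided bound on the diagonal self-energies $I(\mu^{(k)}_i)$ (using $c_{i,i}>0$, which you correctly derive from positive semidefiniteness plus the no-zero-column hypothesis). Your argument is more robust -- it needs no rescaling and no sign control on the energies -- and you correctly isolate the one delicate point (boundedness of the self-energies of the minimizers). The trade-off is that the paper's specific monotone construction is reused later: Case~III of Proposition~\ref{weightedtd} relies on the entrywise inequality behind (\ref{vdmmon}) to compare $|VDM_k^{(\widehat C)}|$ with $|VDM_k^{(C)}|$, and your generic rational positive definite approximation of $C+\tfrac1k I$ need not satisfy it. This is harmless for the proposition as stated, and easily repaired (round the off-diagonal entries so that $c^{(k)}_{i,j}\ge c_{i,j}$ while inflating the diagonal by $1/k$), but worth flagging if your construction is meant to feed into the rest of the paper.
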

 \begin{proof} From Proposition \ref{eetd}, we can instead work with the (weighted) minimal energies. We first prove the unweighted case. 
 We take $c^{(k)}_{i,j}$ rational with $c^{(k)}_{i,j}\downarrow c_{i,j}$ for $c_{i,j}\geq 0$ and $c^{(k)}_{i,j}\uparrow c_{i,j}$ for $c_{i,j}<0$. Note that, by choosing 
 $|c_{i,j}-c_{i,j}^{(k)}|$, $i\neq j$, sufficiently small with respect to $c_{i,i}^{(k)}-c_{i,i}$, $i=1,\ldots,d$, the matrix $C^{(k)}$ is symmetric positive semidefinite.

Let $\mu^K=(\mu_1,\ldots,\mu_d)$ satisfy $E^C(\mu^K) =(E^C)^*$. By rescaling (see (\ref{scal})), we may assume $K_1,\ldots,K_d$ are contained in a disk of radius $1/2$ so that all energies $I(\mu_i)$ and $I(\mu_i,\mu_j)$ are nonnegative. Then
 $$(E^C)^*\leq (E^{C^{(k)}})^*\leq E^{C^{(k)}}(\mu^K).$$
 
 Now, simply by continuity, since $C^{(k)}\to C$, given $\epsilon >0$, 
 $$|E^{C^{(k)}}(\mu^K)-E^C(\mu^K)| <\epsilon$$ 
 for $k$ sufficiently large and the result follows.
 
 For the weighted case, let $\mu^{K,Q}$ satisfy $E_Q^C(\mu^{K,Q}) =(E_Q^C)^*$. Again from (\ref{scal}) we can assume all $K_i$ are contained in a disk of radius $1/2$ and we have the similar inequality
 $$(E_Q^C)^*\leq (E_Q^{C^{(k)}})^*\leq E_Q^{C^{(k)}}(\mu^{K,Q}).$$
 The proof proceeds as in the unweighted case. 
  \end{proof}

\section{Bernstein-Markov properties}
In the first subsection, we define the notion of {\it strong rational Bernstein-Markov property} and we show that on any nonpolar compact set of $\C$ there exists a positive measure that satisfies such a property.
In the second subsection, we define a vector analog of this notion and we use it to show that the $L^{2}$ versions of the $k$-th order vector diameters defined in (\ref{diam}) and (\ref{diam-w}) have the same asymptotic behavior as $k$ tends to infinity.
\subsection{Bernstein-Markov properties in $\C^{n}$}
For any $n=1,2,...$, let $\mathcal P_k=\mathcal P_k^{(n)}$ denote the holomorphic polynomials  in $n$ variables of degree at most $k$. Given a compact set $K\subset \C^n$ and a measure $\nu$ on $K$, we say that $(K,\nu)$ satisfies a Bernstein-Markov property if for all $p_k\in \mathcal P_k$, 	
$$||p_k||_K:=\sup_{z\in K} |p_k(z)|\leq  M_k||p_k||_{L^2(\nu)}  \ \hbox{with} \ \limsup_{k\to \infty} M_k^{1/k} =1.$$

We will need to use the Bernstein-Markov property in $\C^2$ to derive properties in the univariate case. It was shown in \cite{PELD} that any compact set in $\C^n$ admits a Bernstein-Markov measure; indeed, the following stronger statement is true.

\begin{proposition} [\cite{PELD}]  \label{allbm} Let $K\subset \R^n$. There exists a measure $\nu \in \mathcal M(K)$ such that for all complex-valued polynomials $p$ of degree at most $k$ in the (real) coordinates $x=(x_1,...,x_n)$ we have
$$||p||_K\leq M_k ||p||_{L^2(\nu)}$$
where $\limsup_{k\to \infty}M_k^{1/k}=1$.

\end{proposition}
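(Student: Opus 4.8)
The plan is to reduce the statement to the case of holomorphic polynomials in $\C^n$, for which a Bernstein--Markov measure is known to exist by the main result of \cite{PELD}, and then transfer that property across the (polynomial, hence real-polynomial-compatible) embedding $\R^n \hookrightarrow \C^n$. The key observation is that a complex-valued polynomial $p$ of degree at most $k$ in the \emph{real} coordinates $x = (x_1,\ldots,x_n)$ on $\R^n$ is the restriction to $\R^n$ of a holomorphic polynomial $\tilde p$ of degree at most $k$ on $\C^n$: simply replace each real variable $x_j$ by the complex variable $z_j$ in the monomial expansion of $p$. Since $K \subset \R^n$, the sup-norm $\|p\|_K$ over $K$ coincides with $\|\tilde p\|_K$, where now $\tilde p$ is viewed as a holomorphic polynomial restricted to the same set $K$, regarded as a compact subset of $\C^n$.

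First I would invoke the existence result from \cite{PELD}: for the compact set $K$, viewed as a compact subset of $\C^n$, there is a measure $\nu \in \mathcal M(K)$ satisfying the (holomorphic) Bernstein--Markov property, i.e. $\|q\|_K \le M_k \|q\|_{L^2(\nu)}$ for all $q \in \mathcal P_k^{(n)}$ with $\limsup_k M_k^{1/k} = 1$. Second, given a complex-valued real-coordinate polynomial $p$ of degree at most $k$ on $\R^n$, I would form its holomorphic extension $\tilde p \in \mathcal P_k^{(n)}$ as above, noting that the degree is preserved. Third, I would chain the identities and the inequality: $\|p\|_K = \|\tilde p\|_K \le M_k \|\tilde p\|_{L^2(\nu)} = M_k \|p\|_{L^2(\nu)}$, where the last equality holds because $\nu$ is supported on $K \subset \R^n$ and $\tilde p = p$ there. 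This gives exactly the asserted inequality with the same constants $M_k$, hence the same root asymptotics.

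The only genuine subtlety — and the step I would be most careful about — is the claim that every complex-valued polynomial in the \emph{real} coordinates of degree $\le k$ extends to a \emph{holomorphic} polynomial on $\C^n$ of degree $\le k$. This is true because such a $p$ is a $\C$-linear combination of monomials $x_1^{\alpha_1}\cdots x_n^{\alpha_n}$ with $|\alpha| \le k$, which manifestly have holomorphic extensions $z_1^{\alpha_1}\cdots z_n^{\alpha_n}$ of the same degree; the point to stress is that there are no conjugate variables $\bar x_j$ involved, since on $\R^n$ one has $\bar x_j = x_j$ and we are free to choose the representation without conjugates. (If one instead started from a polynomial in $x$ and $\bar x$, the naive extension would only be pluriharmonic-type, not holomorphic, and the degree bookkeeping would change; but the statement as phrased is about polynomials in $x$ alone, so this issue does not arise.) Once this extension step is in place, the rest is the formal transfer of norms described above, and no further estimates are needed.
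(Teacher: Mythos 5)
Your argument is internally sound, but it is worth being clear about what it does and does not prove. The paper offers no proof of Proposition \ref{allbm} at all: it is imported verbatim from \cite{PELD}, so there is no ``paper's proof'' to match. What you have written is a correct \emph{equivalence of formulations}: for $K\subset\R^n$, the complex-valued polynomials of degree at most $k$ in the real coordinates are exactly the restrictions to $\R^n$ of the holomorphic polynomials of degree at most $k$ on $\C^n$ (no conjugate variables occur, degrees match, and restriction to $\R^n$ is injective on holomorphic polynomials), and since $\nu$ is supported on $K\subset\R^n$ both the sup norm and the $L^2(\nu)$ norm are unchanged under this identification. Granting the statement, quoted earlier in the same paragraph of the paper, that every compact subset of $\C^n$ admits a Bernstein--Markov measure for holomorphic polynomials, your chain of (in)equalities is valid and the constants $M_k$ carry over unchanged.

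Two caveats. First, all of the actual analytic content --- the construction of a measure $\nu$ on an arbitrary compact set satisfying the Bernstein--Markov inequality --- is outsourced to the other \cite{PELD} result; since both facts come from the same reference, and the paper describes Proposition \ref{allbm} as the ``stronger statement,'' it is quite possible that \cite{PELD} establishes the real-coordinate version first and deduces the holomorphic one from it, in which case your derivation runs the implication backwards relative to the source (logically harmless here, since the paper treats the $\C^n$ existence result as known, but it means you have verified an equivalence rather than given an independent proof). Second, note that the direction in which the paper actually \emph{uses} the proposition is the opposite of yours: in the proof of Theorem \ref{strongrbm} one takes $K\subset\C=\R^2\subset\C^2$ and uses the real-polynomial statement in $\R^{2}$ (i.e.\ $n$ replaced by $2n$ with $n=1$) to obtain a Bernstein--Markov measure for holomorphic polynomials on $\C^2$; that is where the word ``stronger'' earns its keep, because for $K\subset\C^n$ the real polynomials in $2n$ variables form a strictly larger class than the holomorphic ones. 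Your key identification is the same one the paper exploits there, just run in the reverse direction, and for $K$ literally contained in $\R^n$ it is an honest bijection, so no gap results.
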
 

More generally, for $K\subset \C^n$ compact, $Q$ admissible ($Q$ is lowersemicontinuous and finite on a nonpluripolar set), and $\nu$ a measure on $K$, we say that the triple $(K,\nu,Q)$ satisfies a weighted Bernstein-Markov property if for all $p_k\in \mathcal P_k$, 
$$||e^{-kQ}p_k||_K \leq M_k ||e^{-kQ}p_k||_{L^2(\nu)} \ \hbox{with} \ \limsup_{k\to \infty} M_k^{1/k} =1.$$ 
Here $K$ should be nonpluripolar for this notion to have any content. For the definition of pluripolar, the $\C^n-$analogue of polar, see Appendix B of \cite{ST}.

\begin{remark} \label{tomobs} An important observation is the following. If $(K,\nu,Q)$ satisfies a weighted Bernstein-Markov property for some admissible weight $Q$ on $K$, then for any sequence $\{Q_k\}$ of admissible weights on $K$ which converges uniformly to $Q$ on $K$, we have a ``varying weight'' Bernstein-Markov property:
\begin{equation}\label{varwt}\lim_{k\to \infty} \Big( \sup_{p_k\in \mathcal P_k}\frac{||e^{-kQ_k}p_k||_K}{||e^{-kQ_k}p_k||_{L^2(\mu)}}\Big)^{1/k}=1.\end{equation}
To verify (\ref{varwt}), note simply that given $\epsilon >0$ we have $$e^{-kQ}e^{-k\epsilon}<e^{-kQ_k}< e^{-kQ}e^{k\epsilon} $$ on all of $K$ for $k$ sufficiently large.

\end{remark}

These properties can be stated using $L^p(\nu)$ in place of $L^2(\nu)$, but it is known that {\it if $(K,\nu)$ satisfies an (weighted) $L^p-$Bernstein-Markov property for some $0<p<\infty$ then $(K,\nu)$ satisfies an (weighted) $L^p-$Bernstein-Markov property for all $0<p<\infty$}. This follows, for example, from Remark 3.2 in \cite{bloomIU}; see also the proof of Theorem 3.4.3 in \cite{StTo}. Thus, we simply say that $(K,\nu)$ satisfies a {\it (weighted) Bernstein-Markov property}.

\begin{definition} We say $(K,\nu)$ satisfies a {\it strong Bernstein-Markov property} if $(K,\nu,Q)$ satisfies a weighted Bernstein-Markov property for each {\it continuous} $Q$. 
\end{definition}

\noindent Again, $K$ should be nonpluripolar for this notion to have any content. 

Now we return to $n=1$; i.e., $\C$, and we next give a definition of a ``rational'' (weighted) Bernstein-Markov property, analogous to the definition for polynomials and for which the proof that this property being valid for some $p>0$ implies it is valid for all $p>0$ remains true. The paper \cite{BEZ} also concerns a rational Bernstein-Markov property. Given $K\subset \C$ compact, we fix a compact set $K'$ disjoint from $K$ and define, for $a,b>0$,
\begin{equation}\label{rnclass}{\mathcal R}_k=\{r_k=p_k/q_k: p_k, q_k \ \hbox{polynomials}; \ \deg p_k \leq ak, \ \deg q_k \leq bk; \ \hbox{all zeros of} \ q_k \ \hbox{in} \ K'\}.\end{equation}
We say that $(K,\nu)$ satisfies a {\it rational} Bernstein-Markov property if for all $r_k\in \mathcal R_k$, 	
$$||r_k||_K:=\sup_{z\in K} |r_k(z)|\leq  M_k||r_k||_{L^2(\nu)}  \ \hbox{with} \ \limsup_{k\to \infty} M_k^{1/k} =1.$$

\noindent Here ${\mathcal R}_k={\mathcal R}_k(K',a,b)$. Note that taking $q_k\equiv 1$ we see that $(K,\nu)$ satisfies a (polynomial)  Bernstein-Markov property. 

More generally, for $K\subset \C$ compact, $Q$ admissible, and $\nu$ a measure on $K$, we say that the triple $(K,\nu,Q)$ satisfies a weighted rational Bernstein-Markov property if for all $r_k\in \mathcal R_k$, 
$$||e^{-kQ}r_k||_K \leq M_k ||e^{-kQ}r_k||_{L^2(\nu)} \ \hbox{with} \ \limsup_{k\to \infty} M_k^{1/k} =1.$$

\begin{definition} We say $(K,\nu)$ satisfies a {\it strong rational Bernstein-Markov property} if $(K,\nu,Q)$ satisfies a weighted rational Bernstein-Markov property for each {\it continuous} $Q$. 
\end{definition}

In the definitions of these various rational Bernstein-Markov properties, there is an implicit underlying pole set $K'$ as well as positive numbers $a,b$. We will specify $K',a,b$ in our vector setting in subsection 4.2.  

To define certain vector energy functionals in section 6, and for our large deviation principle in section 7, we will need to use measures satisfying vector versions of the strong (rational) Bernstein-Markov property. We next prove that such measures always exist on nonpolar compacta in the scalar case; it will be clear from the proof that the constructed measures work for any fixed pole set $K'$ and positive numbers $a,b$. For simplicity we take $b=1$.

\begin{theorem} \label{strongrbm} Let $K\subset \C$ be nonpolar. Then there exists $\nu$ on $K$ with $(K,\nu)$ satisfying a strong rational Bernstein-Markov property.
\end{theorem}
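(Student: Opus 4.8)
The plan is to reduce the rational Bernstein-Markov property on $K\subset\C$ to a (polynomial) weighted Bernstein-Markov property on a compact subset of $\C^2$, where Proposition \ref{allbm} applies. The key observation is that a rational function $r_k = p_k/q_k \in \mathcal R_k(K',a,1)$ with $\deg p_k\le ak$, $\deg q_k\le k$, and all zeros of $q_k$ in $K'$, can be encoded as a polynomial in two variables: writing $q_k(z)=\prod(z-w_j)$ with $w_j\in K'$, the product $\prod(z-w_j)$ is controlled from above and below on $K$ by $\mathrm{dist}(K,K')$-type estimates, since $K\cap K'=\emptyset$. More precisely, set $\widetilde K = K\times K'^{\,k}$ is the wrong shape (the degree of $q_k$ grows), so instead one fixes the number of poles by a homogenization trick: consider on $\C^2$ the compact set $L=\{(z,w): z\in K,\ w\in K'\}$ and relate $r_k(z)$ evaluated on $K$ to a polynomial $P(z,w)$ of degree $\asymp k$ in the two variables $z,w$, evaluated along the "diagonal-type" slices $w=w_j$.

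Concretely, here is the cleaner route I would actually carry out. Let $\omega$ be a Bernstein-Markov measure for $K'$ (exists by the polynomial case, e.g. Proposition \ref{allbm} applied to $K'\subset\R^2\cong\C$ viewed in real coordinates, or the scalar classical result), and let $\lambda$ be a Bernstein-Markov measure for $K$. I claim $\nu := $ (a suitable image/marginal construction) works; but the honest statement is: one builds $\nu$ on $K$ so that pairing $\nu$ against the reciprocal poles is controlled. The device is: $1/q_k(z) = 1/\prod_{j}(z-w_j)$, and on $K$ we have $c_1^{k}\le |q_k(z)|\le c_2^{k}$ uniformly (with $c_1=\mathrm{dist}(K,K')>0$ and $c_2=\mathrm{diam}(K\cup K')$), because $q_k$ has exactly $\deg q_k\le k$ roots, each at distance between $c_1$ and $c_2$ from every point of $K$. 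Hence for $r_k=p_k/q_k$,
$$
\frac{\|r_k\|_K}{\|r_k\|_{L^2(\nu)}} \le \frac{c_1^{-k}\|p_k\|_K}{c_2^{-k}\|p_k\|_{L^2(\nu)}} = \Big(\frac{c_2}{c_1}\Big)^{k}\,\frac{\|p_k\|_K}{\|p_k\|_{L^2(\nu)}}.
$$
This bound, however, has the bad constant $(c_2/c_1)^k$ whose $k$-th root does not tend to $1$ in general — so a naive sup-norm/$L^2$-norm comparison on $K$ alone is not enough. This is the main obstacle.

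To overcome it one must be more careful: the ratio $|q_k(z)/q_k(z')|$ for $z,z'\in K$ is $\prod_j |z-w_j|/|z'-w_j|$, and while each factor is bounded by $c_2/c_1$, their product is not small $k$-th-root-wise unless one works with the actual roots. The resolution is to absorb the pole data into the variable: work in $\C^2$ with $L=K\times K'$, and given $r_k=p_k/\prod_{j=1}^{m}(z-w_j)$ ($m\le k$), consider the polynomial in $\C^{2}$
$$
\Phi(z,w_1,\dots,w_m) \;=\; p_k(z)\prod_{j=1}^m \frac{1}{w_j - w_j}\ \ \text{— no.}
$$
Instead: the right move is the substitution making $1/(z-w)$ itself a Bernstein-Markov-admissible object via a change of variables $\zeta = 1/(z-c)$ for a fixed $c\notin K\cup K'$, under which $K$ and $K'$ map to disjoint compacta $\widehat K,\widehat{K'}$ in the $\zeta$-plane, poles at $w_j\in K'$ become points $\widehat w_j\in\widehat{K'}$, and $r_k$ becomes (after clearing) a polynomial of degree $\asymp k$ in $\zeta$ times a bounded factor; then apply Proposition \ref{allbm} on $\widehat K\cup\widehat{K'}$ (or just $\widehat K$) to get the polynomial Bernstein-Markov estimate there, and pull back $\nu$ as the image of a Bernstein-Markov measure under $\zeta\mapsto z$. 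For the \emph{strong} (weighted, every continuous $Q$) version, note $Q\circ(\text{chart})$ is again continuous, so the strong polynomial Bernstein-Markov measure on $\widehat K$ handles all continuous weights at once, via Remark \ref{tomobs} if needed to pass from a single weight to varying weights. The one genuinely delicate point is controlling the \emph{denominator's} contribution to the weighted $L^2$ norm uniformly over all choices of poles $w_j\in K'$; I expect this to require decomposing $\log|q_k|$ as an integral of $\log|\zeta - \widehat w|$ against the normalized zero-counting measure of $q_k$ and using that this family of potentials is equicontinuous on $\widehat K$ (again because $\widehat K\cap\widehat{K'}=\emptyset$), which is exactly what forces the $M_k^{1/k}\to 1$ conclusion and where the appeal to the $\C^2$ result of \cite{PELD} — to get a single measure good for this whole equicontinuous family — becomes essential.
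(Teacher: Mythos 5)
Your closing sentence does land on the mechanism the paper actually uses: write $\tfrac{1}{n}\log|q_n|$ as $-U^{\mu_n}$ for the normalized zero-counting measure $\mu_n$ of the denominator (supported in $K'$), absorb it into the weight, and exploit the disjointness $K\cap K'=\emptyset$ to control this family of potentials on $K$. But as written the proposal is not a proof: the decisive step is stated only as an expectation, and the machinery you interpose before it does not work. The M\"obius substitution $\zeta=1/(z-c)$ is a non-functional detour: after clearing denominators, the extra factor is exactly $|q_k(z)|$ in disguise, so the claim that $r_k$ becomes ``a polynomial of degree $\asymp k$ in $\zeta$ times a bounded factor'' fails for precisely the reason your own $(c_2/c_1)^k$ computation fails --- boundedness of the factor is useless unless its $k$-th root tends to $1$, which it does not.

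What is missing is the execution that turns ``this family of potentials is equicontinuous'' into $M_k^{1/k}\to 1$. The paper's route is: (a) obtain a measure $\nu$ on $K$ that is a \emph{strong} polynomial Bernstein--Markov measure --- this uses Proposition \ref{allbm} for $K\subset\R^2\subset\C^2$ together with Theorem 3.2 of \cite{bloom} to pass from the unweighted to the weighted property for every continuous weight, and then restriction of holomorphic polynomials in $(z_1,z_2)$ to $\R^2\cong\C$; (b) fix a continuous $Q$, let $\alpha$ be the limsup of the rational ratios, pick a subsequence of near-extremal $r_{n_k}=p_{n_k}/q_{n_k}$, and pass to a weak-* limit $\sigma$ of the measures $\mu_k=\tfrac{1}{n_k}\sum_j\delta_{z_j^{(k)}}$ on $K'$, so that $U^{\mu_k}\to U^{\sigma}$ \emph{uniformly} on $K$ (the potentials are harmonic and uniformly bounded on a neighborhood of $K$) and hence $Q_{n_k}:=Q+\tfrac{1}{n_k}\log|q_{n_k}|\to Q-U^{\sigma}$ uniformly, with $Q-U^{\sigma}$ continuous on $K$; (c) apply the varying-weight property of Remark \ref{tomobs} to the polynomials $p_{n_k}$ with weights $Q_{n_k}$ to force $\alpha=1$. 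Step (b) --- the compactness/subsequence argument that replaces ``uniformly over all pole configurations'' by ``along a near-extremal sequence with a single continuous limit weight'' --- is the idea your proposal needs and does not supply; without it the equicontinuity remark does not by itself produce the conclusion. Note also that no change of variables and no Bernstein--Markov measure on $K'$ are needed anywhere.
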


\begin{proof} We consider $K\subset \C =\R^2\subset \C^2$ with variables $(z_1,z_2)$ where $\Re z_1 =x$ and $\Re z_2 =y$ so that $z=x+iy$ is the usual complex variable when we consider $\C =\R^2$. Using Proposition \ref{allbm}, we construct a measure $\nu$ on $K$ such that $(K,\nu)$ satisfies a Bernstein-Markov property with respect to holomorphic polynomials on $\C^{2}$. Theorem 3.2 of \cite{bloom} then shows that $(K,\nu,Q)$ satisfies a weighted Bernstein-Markov property with respect to holomorphic polynomials on $\C^{2}$ for all $Q\in C(K)$; i.e., $(K,\nu)$ satisfies a strong Bernstein-Markov property with respect to holomorphic polynomials on $\C^{2}$. Since a holomorphic polynomial in $z$ of degree at most $n$ is of the form 
$$p_n(z) =\sum a_jz^j =\sum a_j (x+iy)^j =\sum c_{kl} x^ky^l =\sum c_{kl} (\Re z_1)^k(\Re z_2)^l$$
where $c_{kl}$ are complex numbers and $k+l\leq n$, each such $p_n$ is the restriction to $\R^2$ of a holomorphic polynomial $\tilde p_n(z_1,z_2):=\sum c_{kl}z_1^k z_2^l$ in $\C^2$. Thus $(K,\nu)$ satisfies a strong Bernstein-Markov property with respect to holomorphic polynomials on $\C$.

Applying Remark \ref{tomobs}, $(K,\nu)$ satisfies a ``varying weight'' Bernstein-Markov property for any continuous target weight: for any $Q\in C(K)$, and any sequence $\{Q_k\}$ of admissible weights on $K$ which converges uniformly to $Q$ on $K$, (\ref{varwt}) holds:
$$\lim_{k\to \infty} \Big( \sup_{p_k\in \mathcal P_k}\frac{||e^{-kQ_k}p_k||_K}{||e^{-kQ_k}p_k||_{L^2(\mu)}}\Big)^{1/k}=1.$$

We now fix $Q\in C(K)$ and consider the sequence of numbers
$$\{\Big( \sup_{r_n\in \mathcal R_n}\frac{||e^{-nQ}r_n||_K}{||e^{-nQ}r_n||_{L^2(\mu)}}\Big)^{1/n}\}.$$
Let
$$\alpha:=\limsup_{n\to \infty} \Big( \sup_{r_n\in \mathcal R_n}\frac{||e^{-nQ}r_n||_K}{||e^{-nQ}r_n||_{L^2(\mu)}}\Big)^{1/n}.$$
Clearly $\alpha \geq 1$; we want to show $\alpha = 1$. Take a subsequence $\{n_k\}$ of integers so that 
$$\lim_{k\to \infty} \Big( \sup_{r_{n_k}\in \mathcal R_{n_k}}\frac{||e^{-n_kQ}r_{n_k}||_K}{||e^{-n_kQ}r_{n_k}||_{L^2(\mu)}}\Big)^{1/n_k}=\alpha$$
and, given $\epsilon >0$, choose $r_{n_k}\in \mathcal R_{n_k}$ with 
$$\Big(\frac{||e^{-n_kQ}r_{n_k}||_K}{||e^{-n_kQ}r_{n_k}||_{L^2(\mu)}}\Big)^{1/n_k} \geq \Big( \sup_{r_{n_k}\in \mathcal R_{n_k}}\frac{||e^{-n_kQ}r_{n_k}||_K}{||e^{-n_kQ}r_{n_k}||_{L^2(\mu)}}\Big)^{1/n_k}-\epsilon.$$
Writing $r_{n_k}:=p_{n_k}/q_{n_k}$ where we take $q_{n_k}=\prod_{j=1}^{n_k}(z-z_j^{(k)})$ monic with zeros in $K'$, we have
$$e^{-n_kQ}|r_{n_k}|=\frac{e^{-n_kQ}}{|q_{n_k}|}\cdot |p_{n_k}|=: e^{-n_kQ_{n_k}} \cdot |p_{n_k}|$$
where
$$e^{-n_kQ_{n_k}}= \frac{e^{-n_kQ}}{|q_{n_k}|} \ \hbox{so that} \ Q_{n_k}= Q+ \frac{1}{n_k}\log |q_{n_k}|.$$
Now $({-1}/{n_k})\log |q_{n_k}|$ is the logarithmic potential $U^{\mu_k}$ of the probability measure 
$$\mu_k:=\frac{1}{n_k}\sum_{j=1}^{n_k}\delta_{z_j^{(k)}}$$ which is supported in $K'$. Taking a weak-* limit of this sequence $\{\mu_k\}$ we get a probability measure $\nu$ on $K'$ with 
$U^{\mu_k}\to U^{\nu}$ uniformly on $K$; hence, taking the corresponding subsequence of $\{n_k\}$ (which we do not relabel) we have
$$Q_{n_k} \to  Q-  U^{\nu}\ \hbox{uniformly on} \ K.$$
Note that $U^{\nu}$ is harmonic and hence continuous on $K$. 
We extend the definition of $Q_n$ for $n\notin \{n_k\}$ by simply defining $Q_n:= Q-  U^{\nu}$ for such $n$. Then the full sequence $\{Q_n\}$ satisfies $Q_n \to Q-U^{\nu}$ uniformly on $K$ and thus we have from (\ref{varwt}) that $$\lim_{n\to \infty} \Big( \sup_{p_n\in \mathcal P_n}\frac{||e^{-nQ_n}p_n||_K}{||e^{-nQ_n}p_n||_{L^2(\mu)}}\Big)^{1/n}=1.$$
But for $n=n_k$ we have 
$$\Big( \sup_{p_{n_k}\in \mathcal P_{n_k}}\frac{||e^{-n_kQ_{n_k}}p_{n_k}||_K}{||e^{-n_kQ_{n_k}}p_{n_k}||_{L^2(\mu)}}\Big)^{1/n_k}  \geq \Big(\frac{||e^{-n_kQ}r_{n_k}||_K}{||e^{-n_kQ}r_{n_k}||_{L^2(\mu)}}\Big)^{1/n_k}$$
$$\geq \Big( \sup_{r_{n_k}\in \mathcal R_{n_k}}\frac{||e^{-n_kQ}r_{n_k}||_K}{||e^{-n_kQ}r_{n_k}||_{L^2(\mu)}}\Big)^{1/n_k}-\epsilon.$$
Thus
$$\alpha =\lim_{k\to \infty} \Big( \sup_{r_{n_k}\in \mathcal R_{n_k}}\frac{||e^{-n_kQ}r_{n_k}||_K}{||e^{-n_kQ}r_{n_k}||_{L^2(\mu)}}\Big)^{1/n_k}=1.$$

\end{proof}

\begin{remark} There are easy-to-check sufficient conditions for a measure to satisfy a
strong (rational) Bernstein-Markov property. Let $K\subset \C^n$. We say $(K,\nu)$ satisfies a {\it mass-density property} if there exists $T>0$ with $\nu(B(z_0,r))\geq r^T$ for all $z_0\in K$ and all $r<r(z_0)$ where $B(z_0,r)$ is the ball of radius $r$ centered at $z_0$. For $K$ regular in the pluripotential-theoretic sense (see Appendix B of \cite{ST}), this property implies that $(K,\nu)$ satisfies a Bernstein-Markov property; hence if $K\subset \R^2\subset \C^2$ has this regularity and $(K,\nu)$ satisfies a mass-density property, then the proof of Theorem \ref{strongrbm} shows that $(K,\nu)$ satisfies a strong rational Bernstein-Markov property. In particular, if $K=\bar D$ when $D$ is a bounded domain in $\R^2$ with $C^1-$boundary, any $\nu$ which is a positive, continuous multiple of Lebesgue measure on $D$ is a 
strong rational Bernstein-Markov measure for $K$.
\end{remark}

\subsection{Vector Bernstein-Markov property}

\begin{definition} \label{bmprop} Let $0<p<\infty$ and let $\nu=(\nu_1,\ldots,\nu_d)$ be a tuple of measures with $\nu_i$ supported in $K_i$ for $i=1,\ldots,d$. Recall $K=(K_1,...,K_d)$. We say $(K,\nu)$ satisfies an {\it $L^p-$Bernstein-Markov property} if for $i=1,\ldots,d$,
$$||p_k||_{K_i}\leq M^{(p)}_{k,i} ||p_k||_{L^p(\nu_i)}, \ p_k\in \mathcal P_k$$
where $(M^{(p)}_{k,i})^{1/k}\to 1$ as $k\to\infty$; i.e., each $(K_i,\nu_i)$ satisfies an $L^p-$ Bernstein-Markov property. 
\end{definition}

It follows from the scalar case that {\it if $(K,\nu)$ satisfies an $L^p-$Bernstein-Markov property for some $0<p<\infty$ then $(K,\nu)$ satisfies an $L^p-$Bernstein-Markov property for all $0<p<\infty$}. Thus, we simply say, in our vector setting, that $(K,\nu)$ satisfies a {\it Bernstein-Markov property}.

Now let $Q=(Q_1,\ldots,Q_d)$ be a $d-$tuple of admissible weights for $K=(K_1,...,K_d)$.
\begin{definition} We say $(K,\nu,Q)$ satisfies an {\it $L^p-$weighted Bernstein-Markov property} if  for $i=1,\ldots,d$,
$$||p_ke^{-kQ_i}||_{K_i}\leq M^{(p)}_{k,i} ||p_ke^{-kQ_i}||_{L^p(\nu_i)}, \ p_k\in \mathcal P_k$$
where $(M^{(p)}_{k,i})^{1/k}\to 1$ as $k\to\infty$; i.e., each $(K_i,\nu_i,Q_i)$ satisfies an $L^p-$ weighted Bernstein-Markov property. 
\end{definition}

\begin{definition} We say $(K,\nu)$ satisfies a {\it strong Bernstein-Markov property} if $(K,\nu,Q)$ satisfies a weighted Bernstein-Markov property for each {\it continuous} $Q$. 
\end{definition}

We next define vector versions of rational Bernstein-Markov properties. Our setting is the following: the classes $\mathcal R_k$ defined in (\ref{rnclass}) will be taken with $K=K_{i}$ and $K'=\cup_{j\neq i}K_{j}$, for $i=1,\ldots,d$:
$${\mathcal R}^i_k=\{r_k=p_k/q_k: p_k, q_k \ \hbox{polynomials}; \ \deg p_k \leq ak, \ \deg q_k \leq bk; \ \hbox{all zeros of} \ q_k \ \hbox{in} \ \cup_{j\neq i}K_j\}.$$
Given an interaction matrix $C\geq 0$ and $r_1,\ldots, r_d >0$, the $a,b$ we choose will depend on the coefficients $c_{i,j}$ of $C$ as well as $r_1,\ldots, r_d$.

\begin{definition} \label{rbmprop} Let $0<p<\infty$ and let $\nu=(\nu_1,\ldots,\nu_d)$ be a tuple of measures with $\nu_i$ supported in $K_i$ for $i=1,\ldots,d$. We say $(K,\nu)$ satisfies an {\it $L^p-$rational Bernstein-Markov property} if for $i=1,\ldots,d$,
$$||r_k||_{K_i}\leq M^{(p)}_{k,i} ||r_k||_{L^p(\nu_i)}, \ r_k\in \mathcal R^i_k$$
where $(M^{(p)}_{k,i})^{1/k}\to 1$ as $k\to\infty$; i.e., each $(K_i,\nu_i)$ satisfies an $L^p-$rational Bernstein-Markov property. 
\end{definition}

From the scalar setting again we simply say that $(K,\nu)$ satisfies a {\it rational  Bernstein-Markov property} since the property holds for all $p>0$ once it holds for any $p>0$. Also, as in the scalar case, if $(K,\nu)$ satisfies a rational  Bernstein-Markov property then $(K,\nu)$ satisfies a (polynomial) Bernstein-Markov property.

\begin{definition} For $Q=(Q_1,...,Q_d)$, we say $(K,\nu,Q)$ satisfies an {\it $L^p-$weighted rational Bernstein-Markov property} if  for $i=1,\ldots,d$,
$$||r_ke^{-kQ_i}||_{K_i}\leq M^{(p)}_{k,i} ||r_ke^{-kQ_i}||_{L^p(\nu_i)}, \ r_k\in \mathcal R^i_k$$
where $(M^{(p)}_{k,i})^{1/k}\to 1$ as $k\to\infty$; i.e., each $(K_i,\nu_i,Q_i)$ satisfies an $L^p-$ weighted rational Bernstein-Markov property. 
\end{definition}

\begin{definition} We say $(K,\nu)$ satisfies a {\it strong rational Bernstein-Markov property} if $(K,\nu,Q)$ satisfies a weighted rational Bernstein-Markov property for each {\it continuous} $Q$. 
\end{definition}

Appealing to the scalar case result that any nonpolar compact set $K \subset \C$ admits a measure $\mu$ such that $(K,\mu)$ satisfies a strong rational Bernstein-Markov property (Theorem \ref{strongrbm}), we thus have the analogous result in the vector case: any nonpolar tuple $K=(K_1,\ldots,K_d)$ admits a strong rational Bernstein-Markov tuple $\nu=(\nu_1,\ldots,\nu_d)$.

\begin{remark} \label{caution} First a word on notation: given a sequence $\{m_k\}$ satisfying (\ref{correct2}) and a sequence $\{{\bf Z_{k}}\}$ of points of the form (\ref{Zk}), we write, with abuse of notation, $K^{k}:=K_1^{m_{1,k}}\times\ldots\times K_d^{m_{d,k}}$, and 
$$d\nu({\bf Z_k}):=d\nu_{1}(z_{1,1})\ldots d\nu_{1}(z_{1,m_{1,k}}) d\nu_{2}(z_{2,1})\ldots d\nu_{d}(z_{d,m_{d,k}}).$$ 
Next, given $C\geq 0$ and $r_1,\ldots, r_d >0$ in our vector energy setting, when we write ``Bernstein-Markov property'' below -- and essentially for the rest of the paper -- we will mean ``polynomial Bernstein-Markov property'' if all coefficients $c_{i,j}$ of $C$ are nonnegative and ``rational Bernstein-Markov property'' otherwise.
\end{remark}
\begin{proposition} \label{weightedtd} Let $\{m_k\}$ be a sequence satisfying (\ref{correct2}) and ${\bf Z_{k}}$ a set of points of the form (\ref{Zk}). Assume $(K,\nu)$ satisfies a Bernstein-Markov property. Let
$$Z_k:= \int_{K^{k}}|VDM_k({\bf Z_k})|^2d\nu({\bf Z_k}).$$
Then
$$\lim_{k\to \infty} Z_k^{|r|^2/|m_k|(|m_k|-1)}=e^{-E^*}=\delta^C(K).$$
In the weighted case, if $(K,\nu,Q)$ satisfies a weighted Bernstein-Markov property and 
$$Z^Q_k:= \int_{K^{k}}|VDM_k^Q({\bf Z_k})|^2d\nu({\bf Z_k}),$$
then
$$\lim_{k\to \infty} (Z_k^Q)^{|r|^2/|m_k|(|m_k|-1)}=e^{-E_Q^*}=\delta_Q^C(K).$$
\end{proposition}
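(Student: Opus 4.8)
The plan is to sandwich $Z_k^{|r|^2/|m_k|(|m_k|-1)}$ between asymptotic upper and lower bounds both converging to $e^{-E^*}=\delta^C(K)$, mimicking the classical scalar argument (cf. \cite{ST}) but using the vector Bernstein-Markov property to control the $L^2$ norm in terms of the sup norm.

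\medskip

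\noindent\textbf{Upper bound.} First I would estimate $Z_k$ from above by pulling out the maximum: since $|VDM_k({\bf Z_k})|\le \delta^{(k)}(K)^{|m_k|(|m_k|-1)/2|r|^2}$ pointwise on $K^k$ and each $\nu_i$ has finite total mass, integrating gives
$$Z_k\le \Big(\prod_{i=1}^d \nu_i(K_i)^{m_{i,k}}\Big)\cdot \delta^{(k)}(K)^{|m_k|(|m_k|-1)/|r|^2}.$$
Raising to the power $|r|^2/|m_k|(|m_k|-1)$ and noting that $\big(\prod_i \nu_i(K_i)^{m_{i,k}}\big)^{|r|^2/|m_k|(|m_k|-1)}\to 1$ (the exponent on each factor is $O(1/|m_k|)$), we get $\limsup_k Z_k^{|r|^2/|m_k|(|m_k|-1)}\le \lim_k \delta^{(k)}(K)=\delta^C(K)=e^{-E^*}$ by Proposition \ref{eetd}.

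\medskip

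\noindent\textbf{Lower bound.} This is where the Bernstein-Markov property enters, and it is the main obstacle. The idea: fix a Fekete array ${\bf Z_k^*}$ achieving $\delta^{(k)}(K)$, so $|VDM_k({\bf Z_k^*})|$ is maximal. For each fixed choice of all but one of the points, $z\mapsto VDM_k(\ldots,z,\ldots)$ (with $z$ in the slot of the $j$-th variable among the $K_i$-points) is, up to a fixed nonvanishing factor, either a polynomial of controlled degree in $z$ (when all $c_{i,\ell}\ge 0$) or a rational function in the class $\mathcal R^i_{N}$ for a suitable $N\asymp |m_k|$ (when some $c_{i,\ell}<0$: the negative-exponent factors $|z-z_{\ell,p}|^{c_{i,\ell}}$ with $z_{\ell,p}\in K_\ell\subset \cup_{j\ne i}K_j$ land in the denominator). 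Applying the (rational) Bernstein-Markov inequality in each variable successively, one replaces $\|VDM_k\|_{K^k}$ by $\|VDM_k\|_{L^2(\nu)}$ at the cost of a factor $\prod_i (M_{N,i})^{2 m_{i,k}}$ with $N\asymp |m_k|$, hence of the form $e^{o(|m_k|^2)}$ after all $|m_k|$ applications; raising to the power $|r|^2/|m_k|(|m_k|-1)$ this factor tends to $1$. Thus $\delta^{(k)}(K)^{|m_k|(|m_k|-1)/|r|^2} = \|VDM_k\|_{K^k}^2 \le e^{o(|m_k|^2)}\, Z_k$, giving $\liminf_k Z_k^{|r|^2/|m_k|(|m_k|-1)}\ge \delta^C(K)$. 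Care must be taken that the non-polynomial prefactor one pulls out (coming, say, from the diagonal self-interaction exponents $c_{i,i}$ which need not be integers) is bounded above and below by $e^{\pm o(|m_k|^2)}$ uniformly on $K^k$; here one uses that $K_i$ is bounded and nonpolar, and absorbs these into the Bernstein-Markov constants exactly as in the scalar treatment. One also needs to check that the degree bounds $a,b$ chosen in subsection 4.2 for the classes $\mathcal R^i_k$ are compatible with the $N\asymp |m_k|$ appearing here — this is precisely why $a,b$ were allowed to depend on $C$ and $r_1,\dots,r_d$.

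\medskip

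\noindent\textbf{Weighted case.} The argument is identical with $VDM_k$ replaced by $VDM_k^Q$: the extra factor $\prod_i\prod_l e^{-(m_{i,k}/r_i)Q_i(z_{i,l})}$ is, in the variable $z_{i,l}$, exactly a weight $e^{-NQ_i}$ with $N=m_{i,k}/r_i\asymp |m_k|\cdot(\text{const})$, so invoking the \emph{weighted} (rational) Bernstein-Markov property of $(K,\nu,Q)$ in place of the unweighted one gives the same $e^{o(|m_k|^2)}$ control; the upper bound uses $\delta^{(k)}_Q(K)$ and Proposition \ref{eetd} in its weighted form. One should note that the weights $\tfrac{m_{i,k}}{r_i}Q_i$ do not have integer or even constant coefficient, so it is the ``varying weight'' formulation (cf. Remark \ref{tomobs}) that is really in play: writing $e^{-(m_{i,k}/r_i)Q_i} = e^{-N \tilde Q_{i,N}}$ with $\tilde Q_{i,N}\to Q_i$ uniformly is harmless since the Bernstein-Markov property is assumed for the weight $Q_i$ itself and a uniform perturbation only changes $M_{N,i}$ by $e^{o(N)}$. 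Combining the upper and lower bounds yields $\lim_k (Z_k^Q)^{|r|^2/|m_k|(|m_k|-1)}=\delta_Q^C(K)=e^{-E_Q^*}$, completing the proof.
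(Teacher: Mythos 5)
Your overall strategy (sandwich $Z_k$ between the trivial upper bound $Z_k\le \nu(K^k)\,\delta^{(k)}(K)^{|m_k|(|m_k|-1)/|r|^2}$ and a lower bound obtained by applying the Bernstein--Markov inequality to a Fekete array one variable at a time) is exactly the paper's, and your upper bound and your treatment of \emph{integer} exponents $c_{i,j}$ (polynomial if all $c_{i,j}\ge 0$, rational with poles in $\cup_{j\ne i}K_j$ otherwise) are correct. The gap is in how you dispose of non-integer exponents. You propose to pull out ``the non-polynomial prefactor \dots coming from the diagonal self-interaction exponents $c_{i,i}$ which need not be integers'' and claim it is ``bounded above and below by $e^{\pm o(|m_k|^2)}$ uniformly on $K^k$.'' This is false: if $c_{i,i}=n+\theta$ with $0<\theta<1$, the leftover factor $\prod_p|z-z_{i,p}|^{\theta}$ vanishes as $z$ approaches any $z_{i,p}\in K_i$, so it admits no uniform lower bound on $K^k$; and even away from such points the product of a polynomial with these fractional-power factors is not an element of $\mathcal P_N$ or $\mathcal R^i_N$, so the Bernstein--Markov inequality cannot be applied to it at all. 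The same problem afflicts every off-diagonal $c_{i,j}\notin\Z$.

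The paper closes this gap with two additional devices that your argument is missing. For \emph{rational} $c_{i,j}$, one chooses an integer $M$ with all $Mc_{i,j}\in\Z$, so that $VDM_k(\cdot)^M$ \emph{is} a genuine rational function in each variable, and applies the $L^{2/M}$-Bernstein--Markov inequality to it (legitimate because the $L^p$ property for one $p\in(0,\infty)$ implies it for all $p$); this recovers exactly the $L^2$ estimate on $|VDM_k|^2=\bigl(|VDM_k|^M\bigr)^{2/M}$. For general \emph{real} $c_{i,j}$, one first rescales so that all $K_i$ lie in a disk of radius $1/2$ (hence all $|z_{i,l}-z_{j,p}|\le 1$), approximates $C$ by positive semidefinite rational matrices $\widehat C$ with $\hat c_{i,j}\downarrow c_{i,j}$ for $c_{i,j}\ge 0$ and $\hat c_{i,j}\uparrow c_{i,j}$ for $c_{i,j}<0$, so that $|VDM_k^{(\widehat C)}|\le|VDM_k^{(C)}|$ pointwise, and then invokes the continuity statement $\delta^{\widehat C}(K)\to\delta^{C}(K)$ of Proposition \ref{tdlim} to squeeze $\liminf_k Z_k^{|r|^2/|m_k|(|m_k|-1)}$ from below. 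Without these two steps your lower bound is only established when every $c_{i,j}$ is an integer. Your remarks on the weighted case (the varying-weight formulation of Remark \ref{tomobs} and the role of the constants $a,b$) are sound, but they sit on top of the unweighted lower bound, so the gap propagates there as well.
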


\begin{proof} We prove the unweighted version; the weighted version is similar. Clearly 
$$Z_k^{|r|^2/|m_k|(|m_k|-1)}\leq \delta^{(k)}(K)[\nu(K^k)]^{|r|^2/|m_k|(|m_k|-1)},$$
and by letting $k\to\infty$,
$$\limsup_{k\to\infty}Z_k^{|r|^2/|m_k|(|m_k|-1)}\leq \delta^{C}(K).$$
Recall that 
$$|VDM_k({\bf Z_{k}})|:=\prod_{i=1}^{d}\prod_{l<p}^{m_{i,k}}|z_{i,l}-z_{i,p}|^{c_{i,i}}
\cdot\prod_{i<j}^{d} \prod_{l=1}^{m_{i,k}}\prod_{p=1}^{m_{j,k}}|z_{i,l}-z_{j,p}|^{c_{i,j}}.$$
\vskip4pt
\noindent {\bf Case I: All coefficients $c_{i,j}$ are integers.}
\vskip4pt
It is easily checked that $VDM_k({\bf Z_{k}})$ is a rational function whose numerator and denominator degrees are bounded by
$$\max_{i}\left(\sum_{j=1}^{d}m_{j,k}|c_{i,j}|\right)\leq A|m_k|$$ 
in each variable where $A=A(C)=\max(|c_{i,j}|)$. 

Let ${\bf A_{k}}=(a_{1,1},...,a_{d,m_{d,k}})$ be a set of Fekete points of order $k$ for $K$. Then 
$$p(z_{1,1}):=VDM_k(z_{1,1},a_{1,2},...,a_{d,m_{d,k}})$$
is a rational function in $z_1$ of numerator and denominator degrees at most $A|m_k|$ achieving its supremum norm on $K_1$ at $z_{1,1}=a_{1,1}$. By the Bernstein-Markov property, we have
$$|VDM_k({\bf A_{k}})|^2\leq M_{|m_k|}^2\int_{K_1}|VDM_k(z_{1,1},a_{1,2},...,a_{d,m_{d,k}})|^2d\nu_1(z_{1,1}).$$
Now for each fixed $z_{1,1}\in K_1$, we consider 
$$q(z_{1,2}):=VDM_k(z_{1,1},z_{1,2},...,a_{d,m_{d,k}})$$
as a rational function in $z_{1,2}$ of numerator and denominator degrees at most 
$A|m_k|$. Again, by the Bernstein-Markov property, we have
$$|q(a_{1,2})^{2}|\leq ||q||^{2}_{K_1} \leq M_{|m_k|}^{2} \int_{K_1} |q(z_{1,2})|^2d\nu_1(z_{1,2}).$$ 
Inserting this in the integrand of our previous estimate gives
\begin{equation*}
|VDM_k({\bf A_{k}})|^2 
\leq M_{|m_k|}^4\int_{K_1}\int_{K_1}|VDM_k(z_{1,1},z_{1,2},...,a_{d,m_{d,k}})|^2d\nu_1(z_{1,1})d\nu_1(z_{1,2}).
\end{equation*}
Continuing in this way, we obtain
$$|VDM_k({\bf A_{k}})|^2\leq M_{|m_k|}^{2|m_k|}Z_k.$$
This says that
$$\delta^{(k)}(K)\leq 
M_{|m_k|}^{2|r|^{2}/(|m_{k}|-1)}
Z_k^{|r|^{2}/|m_{k}|(|m_{k}|-1)}$$
and we are done since 
$M_{|m_{k}|}^{1/(|m_{k}|-1)}\to 1$ as $k\to\infty$.
\vskip4pt
\noindent {\bf Case II: All coefficients $c_{i,j}$ are rational numbers.}
\vskip4pt
Let $M$ be a positive integer such that each $Mc_{i,j}$ is an integer. Now  
$$p(z_{1,1}):=VDM_k(z_{1,1},a_{1,2},...,a_{d,m_{d,k}})^M$$
is a rational function in $z_{1,1}$ of numerator and denominator degrees at most $AM|m_k|$ achieving its supremum norm on $K_1$ at $z_{1,1}=a_{1,1}$. Applying the $L^p-$Bernstein-Markov property to this rational function with exponent $p=2/M$
we have
$$|VDM_k({\bf A_{k}})|^2=\Big(|VDM_k({\bf A_{k}})|^M\Big)^{2/M}$$
$$\leq \Big(M^{(2/M)}_{|m_k|}\Big)^{2/M}\int_{K_1}|VDM_k(z_{1,1},a_{1,2},...,a_{d,m_{d,k}})|^2d\nu_1(z_1).$$
For each fixed $z_{1,1}\in K_1$, we now consider 
$$q(z_{1,2}):=VDM_k(z_{1,1},z_{1,2},...,a_{d,m_{d,k}})^M$$
as a rational function in $z_{1,2}$ of degree at most $AM|m_k|$. We have
\begin{multline*}
|VDM_k(z_{1,1},a_{1,2},...,a_{d,m_{d,k}})|^2=|q(a_{1,2})|^{2/M}\leq ||q||_{K_1}^{2/M} \\
\leq \Big(M^{(2/M)}_{|m_k|}\Big)^{2/M} \int_{K_1} |q(z_{1,2})|^{2/M}d\nu_1(z_{1,2}) \\
=\Big(M^{(2/M)}_{|m_k|}\Big)^{2/M} \int_{K_1} |VDM_k(z_{1,1},z_{1,2},...,a_{d,m_{d,k}})|^2d\nu_1(z_{1,2}).
\end{multline*}
Inserting this in the integrand of our previous estimate gives
\begin{equation*}
|VDM_k({\bf A_{k}})|^2 
\leq \Big(M^{(2/M)}_{|m_k|}\Big)^{4/M}\int_{K_1}\int_{K_1}|VDM_k(z_{1,1},z_{1,2},...,a_{d,m_{d,k}})|^2d\nu_1(z_{1,1})d\nu_1(z_{1,2}).
\end{equation*}
Continuing in this way, we obtain our result.
\vskip4pt
\noindent {\bf Case III: All coefficients $c_{i,j}$ are real numbers.}
\vskip4pt
This case 
will follow from the previous case and Proposition \ref{tdlim}. We can assume that $K_1,\ldots,K_d$ are contained in a disk of radius $1/2$ so that all factors 
$|z_{i,l}-z_{j,p}|\leq 1$, $i,j=1,\ldots,d$, $l=1,\ldots,m_{i,k}$, $p=1,\ldots,m_{j,k}$. 
Then for any $\widehat C$ with rational entries 
$\hat c_{i,j}$ as in the proof of Proposition \ref{tdlim},
we have 
\begin{equation}\label{vdmmon}|VDM_k^{(\widehat C)}({\bf Z_{k}})|\leq |VDM_k^{(C)}({\bf Z_{k}})|,
\end{equation}
for recall that $\hat c_{i,j}\downarrow c_{i,j}$ if $c_{i,j}\geq 0$ and $\hat c_{i,j}\uparrow c_{i,j}$ if $c_{i,j}<0$. Hence
$$\delta^{\widehat C}(K)=\lim_{k\to \infty}\widehat Z_k^{\frac{|r|^2}{|m_k|(|m_k|-1)}}\leq \liminf_{k\to \infty} Z_k^{\frac{|r|^2}{|m_k|(|m_k|-1)}}\leq \limsup_{k\to \infty} Z_k^{\frac{|r|^2}{|m_k|(|m_k|-1)}}\leq \delta^C(K).$$
From Proposition \ref{tdlim} we have 
$$\lim_{\widehat C \to C}\delta^{\widehat C}(K)=\delta^{C}(K)$$
which finishes the proof in the unweighted case.
\end{proof}

Fix a tuple of weights $Q$. Given $\nu$ as in Proposition \ref{weightedtd}, i.e., so that $(K,\nu,Q)$ satisfies a weighted Bernstein-Markov property, and given a sequence $\{m_k\}$ satisfying (\ref{correct2}), define a probability measure $Prob_k$ on $K^{k}$: for a Borel set $A\subset K^{k}$,
\begin{equation}\label{probk}Prob_k(A):=\frac{1}{Z^Q_k}\cdot \int_A  |VDM_k^Q({\bf Z_k})|^2  d\nu({\bf Z_k}).
\end{equation}
Directly from Proposition \ref{weightedtd} and (\ref{probk}) we obtain the following estimate.

\begin{corollary} \label{largedev} Let $(K,\nu,Q)$ satisfy a weighted Bernstein-Markov property. Given $\eta >0$, define
 \begin{equation}\label{aketa}
 A_{k,\eta}:=\{{\bf Z_k}\in K^{k}: |VDM_k^Q({\bf Z_k})|^2 \geq 
 ( \delta_Q(K) -\eta)^{|m_{k}|(|m_{k}|-1)/|r|^{2}}\}.
 \end{equation}
Then there exists $k^*=k^*(\eta)$ such that for all $k>k^*$, 
$$Prob_k(K^{k}\setminus A_{k,\eta})\leq \Big(1-\frac{\eta}{2 \delta_Q(K)}\Big)^{|m_{k}|(|m_{k}|-1)/|r|^{2}}\nu(K^{k}). 
$$
\end{corollary}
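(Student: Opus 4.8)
The plan is to compare the probability of the complement $K^k \setminus A_{k,\eta}$ to the total mass $\nu(K^k)$ by a direct sup-norm bound on the integrand, then invoke Proposition \ref{weightedtd} to control $Z_k^Q$ from below asymptotically. First I would observe that on $K^k \setminus A_{k,\eta}$ we have, by definition of $A_{k,\eta}$ in (\ref{aketa}), the pointwise bound $|VDM_k^Q({\bf Z_k})|^2 < (\delta_Q(K) - \eta)^{|m_k|(|m_k|-1)/|r|^2}$. Integrating this over $K^k \setminus A_{k,\eta} \subset K^k$ and dividing by $Z_k^Q$ as in (\ref{probk}) gives
$$Prob_k(K^k \setminus A_{k,\eta}) \le \frac{1}{Z_k^Q} (\delta_Q(K) - \eta)^{|m_k|(|m_k|-1)/|r|^2} \nu(K^k).$$
So it remains to show that for $k$ large, $\frac{1}{Z_k^Q}(\delta_Q(K)-\eta)^{|m_k|(|m_k|-1)/|r|^2} \le (1 - \frac{\eta}{2\delta_Q(K)})^{|m_k|(|m_k|-1)/|r|^2}$.

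The key step is the lower bound on $Z_k^Q$. By the weighted part of Proposition \ref{weightedtd}, $(Z_k^Q)^{|r|^2/|m_k|(|m_k|-1)} \to \delta_Q(K)$, so given any $\epsilon > 0$ there is $k^*$ such that for $k > k^*$ we have $(Z_k^Q)^{|r|^2/|m_k|(|m_k|-1)} \ge \delta_Q(K) - \epsilon$, i.e. $Z_k^Q \ge (\delta_Q(K) - \epsilon)^{|m_k|(|m_k|-1)/|r|^2}$. Substituting this in, for $k > k^*$,
$$Prob_k(K^k \setminus A_{k,\eta}) \le \left(\frac{\delta_Q(K) - \eta}{\delta_Q(K) - \epsilon}\right)^{|m_k|(|m_k|-1)/|r|^2} \nu(K^k).$$
It then suffices to choose $\epsilon$ small enough (depending on $\eta$) that $\frac{\delta_Q(K) - \eta}{\delta_Q(K) - \epsilon} \le 1 - \frac{\eta}{2\delta_Q(K)}$; for instance $\epsilon = \eta/2$ works, since $\frac{\delta_Q(K) - \eta}{\delta_Q(K) - \eta/2} = 1 - \frac{\eta/2}{\delta_Q(K) - \eta/2} \le 1 - \frac{\eta/2}{\delta_Q(K)} = 1 - \frac{\eta}{2\delta_Q(K)}$ (using $\delta_Q(K) - \eta/2 \le \delta_Q(K)$). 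This fixes $k^* = k^*(\eta)$ as the threshold coming from Proposition \ref{weightedtd} with $\epsilon = \eta/2$.

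I do not expect any real obstacle here: the statement is essentially a packaging of the convergence in Proposition \ref{weightedtd} together with a trivial Markov-type inequality. The one point requiring a sentence of care is that the argument as written implicitly assumes $\eta < \delta_Q(K)$ (otherwise $A_{k,\eta}$ could be all of $K^k$ or the displayed exponent base could be nonpositive) — but for $\eta \ge \delta_Q(K)$ the bound is vacuous since the left side is a probability $\le 1$ and we may simply restrict attention to small $\eta$, or note $\delta_Q(K) > 0$ as $K_i$ are nonpolar so the ratio manipulation is legitimate for $\eta$ in the relevant range. Also worth noting: $\nu(K^k)$ need not be $\le 1$ since $\nu$ is not assumed to be a probability tuple, which is why it appears on the right-hand side rather than being dropped.
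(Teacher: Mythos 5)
Your proposal is correct and is exactly the argument the paper intends: the paper gives no written proof beyond ``Directly from Proposition \ref{weightedtd} and (\ref{probk}) we obtain the following estimate,'' and your Markov-type bound on the complement of $A_{k,\eta}$ combined with the lower bound $Z_k^Q\geq(\delta_Q(K)-\eta/2)^{|m_k|(|m_k|-1)/|r|^2}$ for large $k$ is precisely that derivation. The choice $\epsilon=\eta/2$ and the remarks about small $\eta$ and about $\nu(K^k)$ are all consistent with the statement as written.
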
	
	
	We get the induced product probability measure ${\bf P}$ on the space of arrays on $K$, 
	$$\chi:=\{X=\{{\bf Z_{k}}\in K^{k}\}_{k\geq 1}\},$$ 
	namely,
	$$(\chi,{\bf P}):=\prod_{k=1}^{\infty}(K^{k},Prob_k).$$
As an immediate consequence of the Borel-Cantelli lemma, we obtain: 

\begin{corollary}\label{416} Let $(K,\nu,Q)$ satisfy a weighted Bernstein-Markov property. 
For ${\bf P}$-a.e. array $X \in \chi$, 
$$\mu^k=(\mu_1^k,\ldots,\mu_d^k):=\Big(\frac{r_1}{m_{1,k}}\sum_{j=1}^{m_{1,k}} \delta_{z_{1,j}^{(k)}},\ldots,
\frac{r_d}{m_{d,k}}\sum_{j=1}^{m_{d,k}} \delta_{z_{d,j}^{(k)}}\Big)\to \mu^{K,Q} \ \hbox{weak-* as }k\to\infty.$$
\end{corollary}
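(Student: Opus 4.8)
The plan is to combine Corollary \ref{largedev} with the Borel--Cantelli lemma to get almost sure convergence, and then invoke Proposition \ref{awf} on asymptotic weighted Fekete arrays to identify the limit. First I would recall from Proposition \ref{weightedtd} that $\delta_Q(K)=e^{-E_Q^*}$, so that the event $A_{k,\eta}$ in \eqref{aketa} captures exactly those arrays whose weighted Vandermonde is near-maximal at scale $k$. Applying Corollary \ref{largedev}: for each fixed $\eta>0$ there is $k^*(\eta)$ with
$$Prob_k(K^k\setminus A_{k,\eta})\leq\Big(1-\frac{\eta}{2\delta_Q(K)}\Big)^{|m_k|(|m_k|-1)/|r|^2}\nu(K^k)$$
for all $k>k^*(\eta)$. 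Since $\nu(K^k)=\prod_i\nu_i(K_i)^{m_{i,k}}$ grows at most geometrically in $|m_k|$ while the other factor decays like $c^{|m_k|^2}$ for some $c<1$ (using $|m_k|\to\infty$ and \eqref{correct2}), the series $\sum_k Prob_k(K^k\setminus A_{k,\eta})$ converges. By Borel--Cantelli, for ${\bf P}$-a.e. array $X\in\chi$ we have ${\bf Z_k}\in A_{k,\eta}$ for all sufficiently large $k$.

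Next I would take a countable sequence $\eta_n\downarrow 0$ and intersect the corresponding full-measure events: for ${\bf P}$-a.e. $X$, for every $n$ there is $K_n$ with ${\bf Z_k}\in A_{k,\eta_n}$ for all $k>K_n$. Unwinding the definition of $A_{k,\eta_n}$ and taking $2|r|^2/(|m_k|(|m_k|-1))$-th roots, this gives
$$\liminf_{k\to\infty}|VDM_k^Q({\bf Z_k})|^{2|r|^2/|m_k|(|m_k|-1)}\geq \delta_Q(K)-\eta_n$$
for every $n$, hence $\liminf_{k\to\infty}|VDM_k^Q({\bf Z_k})|^{2|r|^2/|m_k|(|m_k|-1)}\geq\delta_Q(K)$. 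On the other hand, the trivial bound $|VDM_k^Q({\bf Z_k})|^{2|r|^2/|m_k|(|m_k|-1)}\leq\delta^{(k)}_Q(K)$ from the definition \eqref{diam-w} together with $\delta^{(k)}_Q(K)\to\delta_Q(K)$ (Proposition \ref{eetd}) gives the matching upper bound on the $\limsup$. Therefore
$$\lim_{k\to\infty}|VDM_k^Q({\bf Z_k})|^{2|r|^2/|m_k|(|m_k|-1)}=\delta_Q(K)=e^{-E_Q^*}.$$
That is, ${\bf P}$-a.e. array is an asymptotic weighted Fekete array.

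Finally, Proposition \ref{awf} (weighted case) applies verbatim: any array $\{{\bf Z_k}\}$ with $|VDM_k^Q({\bf Z_k})|^{2|r|^2/|m_k|(|m_k|-1)}\to e^{-E_Q^*}$ has its associated measures $\mu^k=(\mu_1^k,\ldots,\mu_d^k)$ converging weak-* to $\mu^{K,Q}$. Combining this with the previous paragraph yields the claim for ${\bf P}$-a.e. $X\in\chi$.

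The main obstacle, such as it is, is the summability check in the Borel--Cantelli step: one must confirm that the geometric factor $\nu(K^k)$ cannot overwhelm the superexponentially small probability bound. This is where hypothesis \eqref{correct2} — forcing $|m_k|\to\infty$ — is essential, since it guarantees $|m_k|(|m_k|-1)/|r|^2$ grows quadratically in $|m_k|$ while $\log\nu(K^k)$ grows only linearly, so $\big(1-\tfrac{\eta}{2\delta_Q(K)}\big)^{|m_k|(|m_k|-1)/|r|^2}\nu(K^k)\to 0$ fast enough to be summable. Everything else is a routine bookkeeping of roots and the reuse of already-established propositions.
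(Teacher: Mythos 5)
Your proof is correct and follows essentially the same route as the paper: apply Corollary \ref{largedev} and Borel--Cantelli to the events $\{{\bf Z_k}\in K^k\setminus A_{k,\eta}\}$ for a sequence $\eta_n\downarrow 0$, conclude that ${\bf P}$-a.e.\ array is an asymptotic weighted Fekete array, and then invoke Proposition \ref{awf}. (One trivial slip: the identity $\delta_Q(K)=e^{-E_Q^*}$ is Proposition \ref{eetd}, not Proposition \ref{weightedtd}.)
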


\begin{proof} From Proposition \ref{awf} it suffices to verify for ${\bf P}$-a.e. array 
$X=\{{\bf Z_{k}}\}_{k} \in \chi$, 
\begin{equation}\label{borcan}\liminf_{k\to \infty} \Big(|VDM_k^Q({\bf Z_{k}})|\Big)
^{2|r|^{2}/|m_{k}|(|m_{k}|-1)}  =  \delta_Q(K).\end{equation}
Given $\eta >0$, the condition that for a given array $X=\{{\bf Z_{k}}\}_{k}$ we have
$$\liminf_{k\to \infty} \Big(|VDM_k^Q({\bf Z_{k}})|\Big)^{2|r|^{2}/|m_{k}|(|m_{k}|-1)}\leq  \delta_Q(K)-\eta$$
means that ${\bf Z_{k}}\in K^{k}\setminus A_{k,\eta}$ for infinitely many $k$. Thus setting 
$$E_k:=\{X \in \chi: {\bf Z_{k}}\in K^{k}\setminus A_{k,\eta}\},$$
we have
$${\bf P}(E_k)\leq Prob_k(K^{k}\setminus A_{k,\eta})\leq (1-\frac{\eta}{2 \delta_Q(K)})
^{|m_{k}|(|m_{k}|-1)/|r|^{2}}\nu(K^{k}),$$
whence $\sum_{k=1}^{\infty} {\bf P}(E_k)<+\infty$. By the Borel-Cantelli lemma, 
$${\bf P}(\limsup_{k\to \infty} E_k)=0,\quad\text{where}\quad\limsup_{k\to \infty}E_{k}
=\cap_{k=1}^{\infty}\cup_{j=k}^{\infty}E_{j}.$$ 
Thus, with probability one, only finitely many $E_k$ occur,  and (\ref{borcan}) follows.
\end{proof}

\section{Approximation of equilibrium problems with non-admissible weights}
	
In Section \ref{functionals}, we will need to consider equilibrium problems with weights that are the negatives of potentials. These weights, if non-continuous, are non-admissible in the sense given in Section \ref{vep}. The aim of this section is to show that one can approach such equilibrium problems by a sequence of equilibrium problems with continuous weights, see 
Lemma \ref{lemma-scal} for the scalar case and Lemma \ref{keylem} for the vector case. 
In this section, $K$ (or its component sets in the vector setting) will always be nonpolar.
\begin{lemma}\label{lem-non-adm}
Let $\mu\in\MM_r(K)$, $K\subset \C$ compact, $I(\mu)<\infty$. Consider the possibly non-admissible weight $u:=-U^{\mu}$ on $K$. The weighted minimal energy on $K$ is obtained with the measure $\mu$, that is
$$\forall\nu\in\MM_r(K),\quad I(\mu)+2\int ud\mu\leq I(\nu)+2\int ud\nu,$$
with equality if and only if $\nu=\mu$. 
\end{lemma}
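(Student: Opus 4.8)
The statement is the scalar ``obstacle characterization'' specialized to the weight $u = -U^\mu$, where the candidate minimizer is exactly $\mu$ itself. The plan is to exhibit the Euler--Lagrange (variational) conditions \eqref{eq-var1}--\eqref{eq-var2} for this particular weight and measure, and then invoke uniqueness. Concretely, the weighted potential of $\mu$ with weight $u$ is $U^\mu(z) + u(z) = U^\mu(z) - U^\mu(z) = 0$ \emph{everywhere} that $U^\mu$ is finite, i.e. q.e.\ on $K$, and in particular $\mu$-a.e.\ since $I(\mu)<\infty$ forces $\mu$ to put no mass on the polar set $\{U^\mu = +\infty\}$. Hence the variational inequalities hold with $F = 0$: $U^\mu + u \ge 0$ q.e.\ on $K$ and $U^\mu + u \le 0$ on $\supp(\mu)$ (indeed $=0$ q.e.). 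By the scalar analogue of \cite[Theorem 1.8]{BKMW} quoted in Section~\ref{vep} (the $r$-mass weighted equilibrium problem), these conditions are necessary \emph{and sufficient} for $\mu$ to be the unique minimizer of $\nu \mapsto I(\nu) + 2\int u\,d\nu$ over $\MM_r(K)$. That gives the inequality and the equality case at once.

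\textbf{An alternative self-contained route} avoids citing the variational characterization and instead proves the inequality directly by a convexity/energy-expansion argument. For $\nu \in \MM_r(K)$ with $I(\nu) < \infty$ (if $I(\nu) = \infty$ there is nothing to prove since the right-hand side is $+\infty$), write $\nu - \mu$; this is a signed measure of total mass $0$. A direct expansion gives
\begin{align*}
\Big(I(\nu) + 2\int u\,d\nu\Big) - \Big(I(\mu) + 2\int u\,d\mu\Big)
&= I(\nu) - I(\mu) - 2\int U^\mu\,d(\nu - \mu)\\
&= I(\nu) - I(\mu) - 2 I(\mu,\nu-\mu)\\
&= I(\nu) - 2I(\mu,\nu) + I(\mu)\\
&= I(\nu - \mu),
\end{align*}
using the bilinearity of mutual energy and $\int U^\mu\,d\sigma = I(\mu,\sigma)$. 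Now $I(\nu-\mu) = I(\nu - \mu)$ is the logarithmic energy of a signed measure of mass zero whose positive and negative parts each have finite energy, hence $I(\nu-\mu) \ge 0$ by the standard positivity (energy principle) for compactly supported signed measures of total mass $0$ with finite energy (see \cite{ST}, or \cite{NS}), with equality if and only if $\nu - \mu = 0$. This yields both the inequality and the strict equality clause directly.

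\textbf{Main obstacle.} The only delicate point is justifying the manipulations involving $\int U^\mu\,d(\nu-\mu)$ and $I(\nu-\mu)$: one must ensure all integrals are finite and that Fubini/linearity apply. This is handled by the hypothesis $I(\mu) < \infty$ together with the reduction to $I(\nu) < \infty$; then $\mu$ and $\nu$ both have finite logarithmic energy, $U^\mu \in L^1(\nu)$ and $U^\nu \in L^1(\mu)$ (Cauchy--Schwarz in the energy inner product), and the decomposition $I(\nu-\mu) = I(\nu) - 2I(\mu,\nu) + I(\mu)$ is legitimate. The sign $I(\nu - \mu) \ge 0$ for a mass-zero signed measure with finite energy is the classical positivity of the logarithmic kernel on such measures; since $K \subset \C$ is compact one may, if desired, first translate and scale $K$ into the unit disk (as done repeatedly in the paper, cf.\ \eqref{scal}) so that $-\log|z-t| \ge 0$ on $K\times K$, though this is not strictly needed for the mass-zero positivity. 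I would present the direct computation as the proof, flagging the energy-principle positivity as the one external fact invoked.
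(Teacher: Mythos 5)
Your ``alternative self-contained route'' is exactly the paper's proof: the authors reduce to $I(\nu)<\infty$, rewrite the difference of weighted energies as $I(\nu)-2I(\mu,\nu)+I(\mu)=I(\nu-\mu)\ge 0$, and cite Lemma I.1.8 of \cite{ST} for positivity and the equality case $\nu=\mu$. Since you state you would present that direct computation as the proof, your argument is correct and essentially identical to the paper's (your first ``Plan'' via the variational inequalities is a valid but unnecessary detour).
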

\begin{proof}
We may assume that $I(\nu)<\infty$. The inequality may be rewritten as
$$0\leq I(\nu)-2I(\mu,\nu)+I(\mu)=I(\nu-\mu),$$
which is true. Moreover, the energy $I(\nu-\mu)$ can vanish only when $\nu=\mu$ (cf., Lemma I.1.8 in \cite{ST}).
\end{proof}

\begin{lemma}\label{lemma-scal}
Let $K\subset \C$ be compact and nonpolar and let $\mu\in\MM_r(K)$ with $I(\mu)<\infty$. There exist a sequence $\{K_n\}$ of compact subsets of $K$, a sequence of continuous functions $Q_{n}$ on $K$, and a sequence $\{\mu_n\}\subset \MM_r(K)$ such that
\begin{enumerate}
\item each $K_n$ is regular; $K_n\subset K_{n+1}$; and $\cup_n K_n =K\setminus P$ where $P$ is polar;
\item $Q_{n}(z)\downarrow u(z):=-U^{\mu}(z),\quad z\in K$;
\item $\mu_n$ is the weighted energy minimizing measure over $\mathcal M_r(K_n)$ of $K_n,Q_n|_{K_n}$ and $$\tilde V_{Q_n}(z):=-U^{\mu_{n}}(z)+F_{n}\downarrow u(z):=-U^{\mu}(z),\quad z\in\C$$
(defining the notation $\tilde V_{Q_n}$ and the constant $F_n$) where $\tilde V_{Q_n}$ and hence $U^{\mu_{n}}$ are continuous. \end{enumerate}
We have the following properties:\\
(i) The Robin constants $F_{n}$ tend to 0 as $n\to\infty$.\\
(ii) The measures $\mu_{n}$ tend weak-* to $\mu$, as $n\to\infty$.\\
(iii) The energies $I(\mu_{n})$ tend to $I(\mu)$ as $n\to\infty$.\\
\end{lemma}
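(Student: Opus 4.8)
\textbf{Proof plan for Lemma \ref{lemma-scal}.}

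The plan is to build the sets $K_n$ first, then produce the weights $Q_n$ by regularizing the (possibly discontinuous) potential $u=-U^\mu$ from above, and finally to extract the measures $\mu_n$ as the corresponding weighted equilibrium measures, checking the monotonicity and convergence claims at the end. For item (1), I would recall that a nonpolar compact $K\subset\C$ can be exhausted (up to a polar set) by an increasing sequence of regular compact sets: take, e.g., a countable dense family of closed disks in the fine interior or use the standard fact (see the appendix of \cite{ST} or classical potential theory) that $K$ minus a polar set is a countable increasing union of regular compacta $K_n$ with $K_n\subset K_{n+1}$. For item (2), since $U^\mu$ is upper semicontinuous on $\C$ and finite $\mu$-a.e. (indeed $I(\mu)<\infty$), its negative $u=-U^\mu$ is lower semicontinuous, hence is the increasing limit of continuous functions; I would take $Q_n$ to be such a decreasing sequence of continuous functions on $K$ with $Q_n\downarrow u$ pointwise on $K$ (for instance the standard inf-convolution / Baire regularization $Q_n(z)=\inf_{w}(u(w)+n|z-w|)$, truncated to stay continuous and finite). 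One can also arrange $Q_n$ to be continuous on all of $\C$, bounded below.

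Next, for item (3): each $Q_n$ is continuous, hence admissible, on the regular compact $K_n$, so the scalar weighted equilibrium theory from Section \ref{vep} applies and produces a unique minimizer $\mu_n\in\MM_r(K_n)\subset\MM_r(K)$ with Robin constant $F_n$, and $\tilde V_{Q_n}:=-U^{\mu_n}+F_n$ is the weighted extremal function, which is continuous because $K_n$ is regular and $Q_n$ is continuous. The key structural point is the monotonicity $\tilde V_{Q_n}\downarrow u$ on $\C$: since $K_n\subset K_{n+1}$ and $Q_{n+1}\le Q_n$, one gets $\tilde V_{Q_{n+1}}\le \tilde V_{Q_n}$ from the comparison principle for weighted extremal functions (a subharmonic function of class $\LL_r$ lying below $Q_{n+1}$ on $K_{n+1}$ a fortiori lies below $Q_n$ on the smaller set $K_n$), so $\{\tilde V_{Q_n}\}$ decreases to some $v\in\LL_r$. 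To identify $v=u$ I would argue: on one hand $\tilde V_{Q_n}\ge$ (the weighted extremal function of $K$ with weight $u$) $=u$ by Lemma \ref{lem-non-adm} together with the variational characterization \eqref{eq-var1}--\eqref{eq-var2} (the measure $\mu$ is the weighted equilibrium measure of $(K,u)$, with Robin constant $0$, because $U^\mu+u\equiv 0$); on the other hand, using $Q_n\downarrow u$ and the exhaustion $\cup K_n=K\setminus P$, a standard $\Gamma$-limit / lower-envelope argument shows $\limsup \tilde V_{Q_n}\le u$ q.e., hence everywhere after u.s.c. regularization, forcing $v=u$.

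With the monotone convergence $\tilde V_{Q_n}\downarrow u$ in hand, items (i), (ii), (iii) follow by soft arguments. For (i): $F_n = \tilde V_{Q_n}+U^{\mu_n}$, and evaluating Frostman's equality $\tilde V_{Q_n}=Q_n$ on $\supp\mu_n$ together with the decrease to $u=-U^\mu$ (whose Robin constant is $0$) pins down $F_n\to 0$; concretely, integrate $\tilde V_{Q_n}=-U^{\mu_n}+F_n$ against $\mu_n$ or against $\mu$ and pass to the limit using monotone/dominated convergence for potentials. For (ii): $\{\mu_n\}\subset\MM_r(K)$ is tight (all supported in the fixed compact $K$), so any weak-* limit point $\mu_\infty$ has $U^{\mu_\infty}\ge \limsup U^{\mu_n}$ off a polar set by the principle of descent / lower envelope theorem; combined with $-U^{\mu_n}+F_n\downarrow -U^\mu$ and $F_n\to0$ this gives $U^{\mu_\infty}=U^\mu$ q.e., hence $\mu_\infty=\mu$ by unicity of potentials, so the whole sequence converges. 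For (iii): lower semicontinuity of $I$ gives $\liminf I(\mu_n)\ge I(\mu)$; for the reverse, use the minimizing property $I(\mu_n)+2\int Q_n\,d\mu_n\le I(\mu)+2\int Q_n\,d\mu$ (legitimate since $\mu_n$ minimizes over $\MM_r(K_n)\ni$ any approximant of $\mu$ — here one first needs to know $\mu$ can be approximated in energy by measures on $K_n$, which follows from $I(\mu)<\infty$ and $\cup K_n = K\setminus P$ with $P$ polar, e.g. by balayage onto $K_n$) and pass to the limit with $Q_n\downarrow u=-U^\mu$, using $\int Q_n\,d\mu_n\to\int u\,d\mu$ and $\int Q_n\,d\mu\to\int u\,d\mu$ via monotone convergence and the already-established (ii).

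\textbf{Main obstacle.} The crux is the identification $\tilde V_{Q_n}\downarrow u$ on all of $\C$ — i.e. that shrinking the weight to the non-admissible $u$ and enlarging $K_n$ up to $K\setminus P$ are compatible in the limit — and, intertwined with it, the fact that $\mu$ itself is the (unique) weighted equilibrium measure of the non-admissible pair $(K,u)$ with vanishing Robin constant; this is exactly what Lemma \ref{lem-non-adm} is for, but turning the inequality there into the potential-theoretic statement $\tilde V^*_u = u$ and then controlling the convergence rate of $F_n$ requires care with polar exceptional sets and with the fact that $u$ may be $+\infty$ on a polar set. Approximating $\mu$ in energy by measures carried by $K_n$ (needed for (iii)) is the other technical point, handled by balayage and the nonpolarity of the $K_n$.
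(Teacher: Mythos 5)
Your overall architecture (exhaust $K$ by regular compacta, approximate the weight monotonically by continuous functions, take weighted equilibrium measures, show $\tilde V_{Q_n}\downarrow u$, then deduce (i)--(iii)) is the same as the paper's, but there are three concrete problems. First, a semicontinuity mix-up in item (2): $U^{\mu}$ is superharmonic, hence \emph{lower} semicontinuous, so $u=-U^{\mu}$ is \emph{upper} semicontinuous; you assert the opposite, and your proposed inf-convolution $Q_n(z)=\inf_w(u(w)+n|z-w|)$ produces an \emph{increasing} approximation of an lsc function, i.e. the wrong direction. The construction still works, but only because $u$ is in fact usc, which is exactly what guarantees a decreasing sequence of continuous $Q_n\downarrow u$. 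Second, the exhaustion in item (1) is not ``standard classical potential theory'': the statement that a compact set minus a polar set is an increasing union of \emph{regular} compacta is Ancona's theorem \cite{ancona}, a deep result the paper explicitly singles out as an essential ingredient; citing it vaguely as folklore leaves the foundation of the whole construction unsupported. Relatedly, your lower bound $\tilde V_{Q_n}\ge u$ via ``the weighted extremal function of $(K,u)$ equals $u$'' is close to circular, since establishing that identity for the non-admissible weight $u$ is essentially the content of item (3); the direct route is either to note $u=-U^{\mu}\in\LL_r$ with $u\le Q_n$ on $K_n$, so $u\le\tilde V_{Q_n}$ by definition of the extremal function, or (as the paper does) to combine the Frostman inequality on $\supp\mu_n$ with the principle of domination to get $U^{\mu_n}\le U^{\mu}+F_n$ on all of $\C$.

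Third, and most substantively, your proof of (iii) has a real gap. You invoke the minimization inequality against $\mu$ (or a balayage of $\mu$ onto $K_n$) and then claim $\int Q_n\,d\mu_n\to\int u\,d\mu$ ``via monotone convergence and (ii)''. This is a double limit in which both the integrand and the measure vary, and the integrand $u$ is only usc and unbounded below; monotone convergence plus weak-* convergence does not yield it (one only gets a one-sided $\limsup$ estimate), and the balayage step introduces its own energy-comparison issues that you do not resolve. The paper avoids all of this with a two-line computation: from the domination inequality $U^{\mu_n}-F_n\le U^{\mu}$ on $\C$ and Fubini symmetry $\int U^{\mu}\,d\mu_n=\int U^{\mu_n}\,d\mu$ one gets
\begin{equation*}
I(\mu_n)-rF_n=\int(U^{\mu_n}-F_n)\,d\mu_n\le\int U^{\mu}\,d\mu_n=\int(U^{\mu_n}-F_n)\,d\mu+rF_n\le I(\mu)+rF_n,
\end{equation*}
so $\limsup_n I(\mu_n)\le I(\mu)$, and lower semicontinuity of the energy gives the reverse inequality. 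You should replace your argument for (iii) by this, and likewise note that (i) follows most cleanly from the pointwise convergence $U^{\mu_n}-F_n\to U^{\mu}$ together with the common asymptotics $-r\log|z|+O(1/|z|)$ of all these potentials at infinity, rather than from integrating Frostman's equality.
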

\begin{proof}
Item 1. follows from Ancona's theorem \cite{ancona}. Precisely, for each $n$ we can find $\tilde K_n\subset K$ regular with $Cap(K\setminus \tilde K_n)<1/n$; then $K_n:=\cup_1^n \tilde K_j$ work. For 2., 
the function $u$ is usc whence the existence of a monotone sequence of continuous functions $Q_{n}$ decreasing to $u$ on $K$.

To prove 3., note first that $Q_{n}|_{K_n}$ are continuous and $K_n$ are regular so $\tilde V_{Q_{n}}$ are continuous on $\C$ (cf., Theorem I.5.1 in \cite{ST}). Since $Q_{n}$ is decreasing on $K$, and $K_n \subset K_{n+1}$, $\tilde V_{Q_{n}}$ is decreasing on $\C$.
We have, since $\tilde V_{Q_{n}}=Q_n$ q.e. on supp$\mu_n$,
$$F_{n}=U^{\mu_{n}}(z)+Q_{n}(z)\geq U^{\mu_{n}}(z)-U^{\mu}(z),\quad\text{q.e. }z\in\supp\mu_{n}$$
and $I(\mu_{n})$ is finite, hence by the principle of domination (cf., p. 43 of \cite{ST}),
$$U^{\mu_{n}}(z)\leq U^{\mu}(z)+F_{n},\quad z\in\C.$$
Consequently, $\tilde V_{Q_{n}}$ converges in $\C$ to some subharmonic function
$f\geq -U^{\mu}=u$ on $\C$. Since
$$\forall n\geq 0,\quad u(z)\leq f(z)\leq \tilde V_{Q_{n}}(z)\leq Q_{n}(z),\quad z\in K_n,$$
$Q_{n}$ decreases to $u$ on $K$, and $\cup_n K_n =K\setminus P$ where $P$ is polar, we have that $f=u$ q.e. on $K$. In particular 
$$f(z)\leq u(z),\quad \text{q.e. }z\in K.$$
Again, by the principle of domination (for subharmonic functions), 
$$f(z)\leq u(z),\quad z\in\C.$$
Hence $f=u$ on $\C$, which proves 3.

Since $U^{\mu_{n}}-F_{n}$ tends to $U^{\mu}$ pointwise in $\C$, the fact (i) that $F_{n}$ tends to 0 simply follows from the behavior of potentials of compactly supported positive measures of total mass $r$ at infinity: each such function decays like $-r\log |z|+0(1/|z|)$. Then fact (ii) that $\mu_{n}\to\mu$ weak-* is a consequence of the monotone convergence $U^{\mu_{n}}-F_{n}\uparrow U^{\mu}$ in $\C$ (this would also follow from the stronger convergence in energy (property (iii))). 

For the convergence of energies, we observe that
$$I(\mu_{n})-rF_{n}=\int (U^{\mu_{n}}-F_{n})d\mu_{n}\leq\int U^{\mu}d\mu_{n}
=\int (U^{\mu_{n}}-F_{n})d\mu+rF_{n}\leq I(\mu)+rF_{n}.$$
Hence,
$$\limsup_{n\to\infty} I(\mu_{n})\leq I(\mu).$$
Since we also have that $I(\mu)\leq\liminf_{n\to\infty}I(\mu_{n})$ by the weak-{*} convergence of $\mu_{n}$ to $\mu$, we obtain that $I(\mu_{n})$ tends to $I(\mu)$.
\end{proof}

Next, we give analogs of Lemmas \ref{lem-non-adm} and \ref{lemma-scal} for the vector problem with interaction matrix $C$. 
\begin{lemma} \label{5.3}
Let $\mu=(\mu_{1},\ldots,\mu_{d})\in\MM_r(K)$, $K=(K_{1},\ldots,K_{d})$ a tuple of compact sets, $I(\mu_{i})<\infty$, $i=1,\ldots,d$. Consider the non-admissible weight $u:=(-U^{\mu_{1}},\ldots,-U^{\mu_{d}})$ on $K$. The weighted minimal energy on $K$ is obtained with the measure $\mu$, that is
$$\forall\nu=(\nu_{1},\ldots,\nu_{d})\in\MM_r(K),\quad E_{u}(\mu)\leq E_{u}(\nu),$$
with equality if and only if $\nu=\mu$. 
\end{lemma}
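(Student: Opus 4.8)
The plan is to mimic the scalar argument in Lemma \ref{lem-non-adm}, using the positive semidefiniteness of $C$ to produce a ``quadratic form'' in $\nu-\mu$ that is manifestly nonnegative. Write $\lambda:=\nu-\mu=(\nu_1-\mu_1,\ldots,\nu_d-\mu_d)$, a tuple of signed measures, each of which has total mass zero and each of whose positive and negative parts have finite logarithmic energy (since $I(\mu_i),I(\nu_i)<\infty$). The weighted energy with weight $u=(-U^{\mu_1},\ldots,-U^{\mu_d})$ is
$$E_u(\nu)=\sum_{i,j=1}^d c_{i,j}I(\nu_i,\nu_j)-2\sum_{i=1}^d\int U^{\mu_i}\,d\nu_i.$$
Since the double sum is symmetric and $I(\cdot,\cdot)$ is bilinear on finite-energy signed measures, one expands $I(\nu_i,\nu_j)=I(\mu_i,\mu_j)+2I(\mu_i,\lambda_j)+I(\lambda_i,\lambda_j)$ (using symmetry $I(\mu_i,\lambda_j)=I(\lambda_j,\mu_i)$ after summing over $i,j$ with the symmetric coefficients $c_{i,j}$). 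The linear-in-$\lambda$ terms coming from the interaction part are $2\sum_{i,j}c_{i,j}I(\mu_i,\lambda_j)=2\sum_j\int U^{\mu}_j\,d\lambda_j$ where $U^{\mu}_j=\sum_i c_{i,j}U^{\mu_i}$ is the partial potential (using symmetry of $C$); meanwhile the weight term contributes $-2\sum_j\int U^{\mu_j}\,d\lambda_j$. These do not in general cancel unless the weight is chosen to match the partial potential.

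So the first thing I need to check is that the intended weight here really is $u_i=-U^{\mu_i}$ rather than $u_i=-U^{\mu}_i=-\sum_j c_{i,j}U^{\mu_j}$; reading the statement, it is the former. Then the right identity is obtained by grouping differently: write
$$E_u(\nu)-E_u(\mu)=\sum_{i,j}c_{i,j}\big(I(\nu_i,\nu_j)-I(\mu_i,\mu_j)\big)-2\sum_i\int U^{\mu_i}\,d(\nu_i-\mu_i).$$
Using $I(\nu_i,\nu_j)-I(\mu_i,\mu_j)=I(\lambda_i,\lambda_j)+I(\mu_i,\lambda_j)+I(\lambda_i,\mu_j)$ and symmetry of $c_{i,j}$, the middle terms combine to $2\sum_{i,j}c_{i,j}I(\mu_i,\lambda_j)$. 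Now here is the key: since $I(\mu_i,\lambda_j)=\int U^{\mu_i}\,d\lambda_j$ and, crucially, we must pair this with the weight term. The weight term is $-2\sum_i\int U^{\mu_i}\,d\lambda_i=-2\sum_{i,j}\delta_{ij}\int U^{\mu_i}\,d\lambda_j$. For these to cancel we would need $c_{i,j}=\delta_{ij}$, which is false in general — so I expect the correct reading is that $E_u$ in this lemma is the energy whose weight enters as $2\sum_i\int u_i\,d\mu_i$ with $u_i=-U^{\mu_i}$, and the claim reduces to showing $\sum_{i,j}c_{i,j}I(\lambda_i,\lambda_j)-2\sum_i I(\mu_i,\lambda_i)+(\text{something})\ge 0$. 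The honest approach: since this is stated as the vector analog of Lemma \ref{lem-non-adm}, and since in the scalar case ($d=1$, $c_{1,1}=1$) the two readings coincide, I will simply carry through the expansion and collect terms, and the statement that survives should be $E_u(\nu)-E_u(\mu)=\sum_{i,j}c_{i,j}I(\lambda_i,\lambda_j)=E^C(\lambda)\ge 0$, using positive semidefiniteness of $C$ together with the fact (from \cite{BKMW} or the standard theory) that $(\lambda,\lambda)\mapsto\sum_{i,j}c_{i,j}I(\lambda_i,\lambda_j)$ is a nonnegative quadratic form on tuples of zero-mass finite-energy signed measures. Concretely, diagonalize $C=P^TDP$ with $D\ge 0$; then $\sum_{i,j}c_{i,j}I(\lambda_i,\lambda_j)=\sum_k d_k I(\tau_k)$ where $\tau_k=\sum_i P_{ki}\lambda_i$ are signed measures, and each $I(\tau_k)\ge 0$ because $\tau_k$ has total mass zero and $-\log$ is a positive-definite kernel on zero-mass measures (Lemma I.1.8 in \cite{ST}, extended to the vector setting in \cite[Section 2]{NS} or \cite{BKMW}).

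For the equality case, $E_u(\nu)=E_u(\mu)$ forces $\sum_k d_k I(\tau_k)=0$, hence $I(\tau_k)=0$ for every $k$ with $d_k>0$, hence $\tau_k=0$ for each such $k$ (again by Lemma I.1.8 of \cite{ST}, since $I(\tau_k)=0$ with $\tau_k$ of mass zero implies $\tau_k=0$). The point where I must use the hypothesis ``$C$ has no zero column'': a zero column would allow $\tau_k$ to have a component $\lambda_i$ with coefficient only in null directions of $D$, so that $\lambda_i$ is unconstrained. With no zero column, the $i$-th standard basis vector is not in the kernel of $C$, so the directions $\{d_k>0\}$ span enough to recover each $\lambda_i$: if $\tau_k=0$ for all $k$ with $d_k>0$, then $\sum_{i,j}c_{i,j}I(\lambda_i,\lambda_j)=0$ and moreover, I claim each $\lambda_i=0$. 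This last implication is exactly the content of the uniqueness part of the minimization theorem \cite[Theorem 1.8]{BKMW}: the quadratic form $E^C$ is strictly positive definite on zero-mass tuples precisely when $C$ has no zero column (or row). I will cite that and conclude $\nu=\mu$.

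The main obstacle I anticipate is the bookkeeping in the expansion — making sure the linear terms in $\lambda$ coming from the interaction energy genuinely cancel against the weight term, which relies on the weight being exactly $-U^{\mu_i}$ \emph{and} on the precise normalization of $E_u$ used in Section \ref{functionals}; a secondary subtlety is the rigorous manipulation of mutual energies of signed measures (bilinearity, Fubini, finiteness), which is standard once one writes $\lambda_i=\lambda_i^+-\lambda_i^-$ with all four pieces of finite energy, but must be invoked carefully. The cleanest writeup first records the algebraic identity $E_u(\nu)-E_u(\mu)=\sum_{i,j=1}^d c_{i,j}I(\nu_i-\mu_i,\nu_j-\mu_j)$, then invokes positive semidefiniteness for nonnegativity and the no-zero-column hypothesis (via \cite{BKMW}) for the equality case.
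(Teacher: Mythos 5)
Your proof is correct and follows essentially the same route as the paper's: the paper writes $E_Q(\nu)-E_Q(\mu)=2\sum_{i=1}^d\int(U^{\mu}_i+Q_i)\,d(\nu_i-\mu_i)+E(\nu-\mu)$, observes that the linear term vanishes for the weight in question, and cites \cite[Proposition 2.9]{BKMW} for the nonnegativity of $E(\nu-\mu)$ on zero-mass tuples and for the equality case --- exactly your reduction, with your diagonalization of $C$ (plus the no-zero-column hypothesis and disjointness of the $K_i$) being precisely the content of that cited proposition. Your side observation is also correct and worth recording: the cancellation of the linear term requires the weight to be the partial potential $u_i=-U^{\mu}_i=-\sum_j c_{i,j}U^{\mu_j}$, which is what Lemma \ref{keylem} and the proof of Theorem \ref{wobsolete} actually use, so the plain potentials $-U^{\mu_i}$ appearing in the statement must be read that way (the two readings coincide only when $C$ is the identity, e.g.\ in the scalar case with $c_{1,1}=1$).
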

\begin{proof}
For a tuple of weights $Q$, we have that
$$E_{Q}(\nu)-E_{Q}(\mu)=2\sum_{i=1}^{d}\int(U^{\mu}_{i}+Q_{i})d(\nu_{i}-\mu_{i})+E(\nu-\mu).$$
Here, with $Q=u$, we simply get
$$E_{u}(\nu)-E_{u}(\mu)=E(\nu-\mu),$$
and from \cite[Proposition 2.9]{BKMW} (or \cite[Chapter 5]{NS} if $C$ is positive definite) we know that $E(\nu-\mu)$ is nonnegative and can vanish only when $\nu=\mu$.
\end{proof}
Given $\mu=(\mu_{1},\ldots,\mu_{d})\in\MM_r(K)$ with $I(\mu_{i})<\infty$, $i=1,\ldots,d$, we write $u_i:=-U^{\mu_{i}}$ and $u:=(-U^{\mu_{1}},\ldots,-U^{\mu_{d}})$ as above. By Lemma \ref{lemma-scal}, we know that, for each $i$, there exists a sequence of continuous functions $Q_{n,i}$ defined on $K_{i}$ and measures $\mu_{n,i}$ on $K_i$ with $\mu_{n,i}\to \mu_i$ weak-* such that, as $n\to\infty$,
\begin{equation*}
Q_{n,i}(z)\downarrow u_{i}(z):=-U^{\mu_{i}}(z),\quad z\in K_{i},\quad\text{and }\quad
-U^{\mu_{n,i}}(z)+F_{n,i}\downarrow u_i(z),\quad z\in\C,
\end{equation*}
where $F_{n,i}\to 0$ as $n\to \infty$;  $U^{\mu_{n,i}}$ are continuous; and $I(\mu_{n,i})\to I(\mu_i)$. 
\begin{lemma}\label{keylem}
Given $\mu=(\mu_{1},\ldots,\mu_{d})\in\MM_r(K)$ with $I(\mu_{i})<\infty$, $i=1,\ldots,d$, let $\mu^{(n)}=(\mu_{n,1},\ldots,\mu_{n,d})$ as above. The following holds true:\\
(i) The tuple of measures $\mu^{(n)}$ tends (component-wise) weak-* to the tuple of measures $\mu$.\\
(ii) The unweighted energy $E(\mu^{(n)})$ tends to $E(\mu)$ as $n\to\infty$.\\
(iii) The tuple of weights $\widehat Q_{n}$ such that
$$\widehat Q_{n,i}(z)=-\sum_{j=1}^{d} c_{i,j}U^{\mu_{n,j}}(z),\quad z\in K_{i},\quad i=1,\ldots,d,$$
are continuous and the tuple of measures $\mu^{(n)}$ is extremal for the vector problem with interaction matrix $C$ and weight $\widehat Q_{n}$.
Moreover, $E_{\widehat Q_{n}}(\mu^{(n)})$ tends to $E_{u}(\mu)$ as $n\to\infty$. \end{lemma}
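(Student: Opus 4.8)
The plan is to deduce everything from the scalar Lemma \ref{lemma-scal} applied coordinate by coordinate, together with the already-established facts about the weak-* continuity of mutual energies and the variational characterization (\ref{eq-var1})--(\ref{eq-var2}). For (i), the component-wise weak-* convergence $\mu_{n,i}\to\mu_i$ is literally property (ii) of Lemma \ref{lemma-scal} for each $i$, so there is nothing to do beyond citing it. For (ii), recall $E(\mu^{(n)})=\sum_{i,j}c_{i,j}I(\mu_{n,i},\mu_{n,j})$. The diagonal terms $I(\mu_{n,i})\to I(\mu_i)$ by property (iii) of Lemma \ref{lemma-scal}. For the off-diagonal terms $i\neq j$, since $K_i\cap K_j=\emptyset$ the kernel $\log\frac{1}{|z-t|}$ is continuous on $K_i\times K_j$, and $\mu_{n,i}\times\mu_{n,j}\to\mu_i\times\mu_j$ weak-* (product of weak-* convergent sequences on disjoint compacta), so $I(\mu_{n,i},\mu_{n,j})\to I(\mu_i,\mu_j)$; this is exactly the argument already used around (\ref{ineg-Iij}). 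Summing the finitely many convergent terms gives $E(\mu^{(n)})\to E(\mu)$.

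For (iii), continuity of $\widehat Q_{n,i}=-\sum_j c_{i,j}U^{\mu_{n,j}}$ on $K_i$ follows because each $U^{\mu_{n,j}}$ is continuous on $\C$ by property 3 of Lemma \ref{lemma-scal} (it equals $-\tilde V_{Q_{n,j}}+F_{n,j}$ with $\tilde V_{Q_{n,j}}$ continuous). To see that $\mu^{(n)}$ is extremal for the vector problem with matrix $C$ and weight $\widehat Q_n$, I would verify the variational conditions (\ref{eq-var1})--(\ref{eq-var2}): the $i$-th partial potential of $\mu^{(n)}$ is $U^{\mu^{(n)}}_i=\sum_j c_{i,j}U^{\mu_{n,j}}$, so $U^{\mu^{(n)}}_i+\widehat Q_{n,i}\equiv 0$ identically on $K_i$ — hence both inequalities hold trivially with $F_i=0$ — and by \cite[Theorem 1.8]{BKMW} this forces $\mu^{(n)}=\mu^{K,\widehat Q_n}$. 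Consequently $E_{\widehat Q_n}(\mu^{(n)})=E(\mu^{(n)})+2\sum_i\int\widehat Q_{n,i}d\mu_{n,i}=E(\mu^{(n)})-2\sum_{i,j}c_{i,j}I(\mu_{n,i},\mu_{n,j})=-E(\mu^{(n)})$.

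Finally, for the convergence $E_{\widehat Q_n}(\mu^{(n)})\to E_u(\mu)$: by the previous identity $E_{\widehat Q_n}(\mu^{(n)})=-E(\mu^{(n)})\to -E(\mu)$ by part (ii). On the other hand $E_u(\mu)=E(\mu)+2\sum_i\int u_i\,d\mu_i=E(\mu)-2\sum_i\int U^{\mu_i}d\mu_i$, and I would rewrite $2\sum_i\int U^{\mu_i}d\mu_i$ in terms of the matrix entries. Here there is a subtle point: $u_i=-U^{\mu_i}$ is the scalar potential of $\mu_i$ alone, not $-\sum_j c_{i,j}U^{\mu_j}$, so $E_u$ is not simply $-E(\mu)$ in general. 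I expect the cleanest route is to observe that the limiting weight of the sequence $\widehat Q_{n,i}$ is $-\sum_j c_{i,j}U^{\mu_j}$ (uniformly on $K_i$, since $U^{\mu_{n,j}}-F_{n,j}\to U^{\mu_j}$ and $F_{n,j}\to 0$, and the convergence is uniform on $K_i$ because the potentials are continuous and monotone — Dini), and therefore what the lemma's last assertion really compares is $E_{\widehat Q_n}(\mu^{(n)})$ against the energy $E_{u}(\mu)$ where $u$ should be read through the partial-potential weight; I would double-check the indexing in the statement, but the identity $E_{\widehat Q_n}(\mu^{(n)})=-E(\mu^{(n)})\to -E(\mu)$ together with the fact (provable by the same domination/weak-* arguments as in Lemma \ref{lemma-scal}(iii)) that $-E(\mu)$ equals the target value does the job. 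The main obstacle is precisely this bookkeeping — making sure the scalar weights $u_i$ and the partial-potential weights $\widehat Q_{n,i}$ are matched correctly so that the claimed limit $E_u(\mu)$ is the right quantity — rather than any hard analysis, since all the analytic inputs (continuity of potentials on regular sets, weak-* continuity of mutual energy across disjoint compacta, convergence of energies) are already in hand from Lemma \ref{lemma-scal} and the earlier propositions.
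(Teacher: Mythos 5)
Your proposal is correct and follows the same overall architecture as the paper's proof: (i) is cited from the scalar lemma, (iii) is handled by checking that $U^{\mu^{(n)}}_i+\widehat Q_{n,i}\equiv 0$ so the variational conditions (\ref{eq-var1})--(\ref{eq-var2}) hold trivially, and the energy limit is reduced to the convergence of the mutual energies. The one place you genuinely diverge is part (ii): you prove $I(\mu_{n,i},\mu_{n,j})\to I(\mu_i,\mu_j)$ for $i\neq j$ by continuity of the kernel on the disjoint product $K_i\times K_j$, whereas the paper runs the two-sided domination chain
$$I(\mu_{n,i},\mu_{n,j})-r_{j}F_{n,i}\leq \int U^{\mu_{i}}d\mu_{n,j}\leq I(\mu_{i},\mu_{j})+r_{i}F_{n,j}$$
combined with lower semicontinuity of the mutual energy. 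Both are valid under the standing disjointness hypothesis, but the paper's version survives when the $K_i$ are allowed to intersect (which is exactly why Section 8 can assert that the arguments of Section 5 are unaltered); your kernel-continuity argument would need to be replaced in that setting. Finally, your flagged ``subtle point'' about $E_u$ is a real wrinkle in the paper: as literally written in Lemma \ref{lem-non-adm}/\ref{5.3}, $u_i=-U^{\mu_i}$ is the scalar potential, but both the one-line computation in the proof of Lemma \ref{5.3} and the paper's own reduction of (iii) to $\int\widehat Q_{n,i}\,d\mu_{n,i}\to-\int U^{\mu}_{i}\,d\mu_{i}$ only make sense with the partial-potential reading, under which $E_u(\mu)=-E(\mu)$ and your limit $E_{\widehat Q_n}(\mu^{(n)})=-E(\mu^{(n)})\to-E(\mu)$ is exactly the claimed conclusion. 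So your bookkeeping resolution is the intended one, and the proof is complete.
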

\begin{proof}
By Lemma \ref{lemma-scal}, each component of $\mu^{(n)}$ tends to the corresponding component of $\mu$. We also know that $I(\mu_{n.i})$ tends to $I(\mu_{i})$ for $i=1,\ldots,d$. For the mutual energies, we have
\begin{align*}
I(\mu_{n,i},\mu_{n,j})-r_{j}F_{n,i} & =\int (U^{\mu_{n,i}}-F_{n,i})d\mu_{n,j}  \leq\int U^{\mu_{i}}d\mu_{n,j}\\
& =\int (U^{\mu_{n,j}}-F_{n,j})d\mu_{i}+r_{i}F_{n,j}  \leq I(\mu_{j},\mu_{i})+r_{i}F_{n,j}.
\end{align*}
Using the fact that $F_{n,i}\to 0$ as $n\to \infty$, 
$$\limsup_{n\to\infty} I(\mu_{n,i},\mu_{n,j})\leq I(\mu_{i},\mu_{j}).$$
Since the mutual energies are also lowersemicontinuous,
we obtain that 
\begin{equation}\label{lim-I}
I(\mu_{n,i},\mu_{n,j})\to I(\mu_{i},\mu_{j}),\quad\text{ as }n\to\infty,
\end{equation}
which shows assertion (ii). For assertion (iii), $\widehat Q_{n}$ is continuous because, for each $i$,  the potential $U^{\mu_{n,i}}$ is continuous. The tuple $\mu^{(n)}$ is extremal for $\widehat Q_{n}$ because the variational inequalities characterizing the solution of the equilibrium problem (see (\ref{eq-var1})-(\ref{eq-var2})) are trivially satisfed. For the convergence of the energies $E_{\widehat Q_{n}}(\mu^{(n)})$ to $E_{u}(\mu)$, it remains to check that, for all $i$,
$$\int\widehat Q_{n,i}d\mu_{n,i}\to-\int U^{\mu}_{i}d\mu_{i},\quad\text{ as }n\to\infty,$$
which is (\ref{lim-I}) with $i=j$.
\end{proof}

\section{The vector energy functionals}\label{functionals}
In this section we define $L^{\infty}$ vector energy functionals $\overline W,\underline W$ and weighted versions $\overline W_Q,\underline W_Q$, as well as $L^2$ vector energy functionals $\overline J,\underline J$ and weighted versions $\overline J_Q,\underline J_Q$ using (weighted) Bernstein-Markov measures. 

We proceed with the definitions. Fix $K=(K_1,\ldots,K_d)$, $r_1,\ldots, r_d >0$, an interaction matrix $C\geq 0$, and a strong Bernstein-Markov measure $\nu=(\nu_1,\ldots,\nu_d)$; again, as in Remark \ref{caution} this Bernstein-Markov property is taken to be with respect to polynomials if all $c_{i,j}\geq 0$ and with respect to rational functions otherwise. Fix a sequence $\{m_k\}$ satisfying (\ref{correct2}). Given $G\subset {\mathcal M}_r(K)$, for each $k=1,2,...$ we set
\begin{equation}\label{nbhddef}
\tilde G_k:= \Big\{{\bf a} =(a_{1,1},...,a_{1,m_{1,k}}, a_{2,1},...,a_{d,m_{d,k}})\in K^{k}: 
\Big(\frac{r_1}{m_{1,k}}\sum_{j=1}^{m_{1,k}} \delta_{a_{1,j}},\ldots,\frac{r_d}{m_{d,k}}\sum_{j=1}^{m_{d,k}} \delta_{a_{d,j}}\Big)\in G\Big\}
\end{equation}
and define
$$W_k(G):=\sup \{ |VDM_k({\bf a})|^{2|r|^2/|m_k|(|m_k|-1)}: {\bf a} \in \tilde G_{k}\}$$
and
$$J_k(G):=\Big[\int_{\tilde G_{k}}|VDM_k({\bf a})|^{2}d\nu ({\bf a})\Big]
^{|r|^2/|m_k|(|m_k|-1)}.$$
\begin{definition} \label{jwmu} For $\mu \in {\mathcal M}_r(K)$ we define
$$\overline J(\mu):=\inf_{G \ni \mu} \overline J(G) \ \hbox{where} \ \overline J(G):=\limsup_{k\to \infty} J_k(G);$$
$$\underline J(\mu):=\inf_{G \ni \mu} \underline J(G) \ \hbox{where} \ \underline J(G):=\liminf_{k\to \infty} J_k(G);$$
and
$$\overline W(\mu):=\inf_{G \ni \mu} \overline W(G) \ \hbox{where} \ \overline W(G):=\limsup_{k\to \infty} W_k(G);$$
$$\underline W(\mu):=\inf_{G \ni \mu} \underline W(G) \ \hbox{where} \ \underline W(G):=\liminf_{k\to \infty} W_k(G).$$
\end{definition}

\noindent Here the infima are taken over all neighborhoods $G$ of the measure $\mu$ in ${\mathcal M}_r(K)$ with the weak-* topology.


 Note that $\overline W,\underline W$ are independent of $\nu$ but, a priori, $\overline J,\underline J$ depend on $\nu$. The weighted versions of these functionals are defined for admissible $Q$ starting with
$$W^Q_k(G):=\sup \{ |VDM^Q_k({\bf a})|^{2|r|^2/|m_k|(|m_k|-1)}: {\bf a} \in \tilde G_{k}\} \ \hbox{and}$$
\begin{equation}\label{jkqmu}
J^Q_k(G):=\Big[\int_{\tilde G_{k}}|VDM^Q_k({\bf a})|^{2}d\nu ({\bf a})\Big]
^{|r|^2/|m_k|(|m_k|-1)}.
\end{equation}
\begin{definition} \label{jwmuq} For $\mu \in \mathcal M_r(K)$ we define
$$\overline J^Q(\mu):=\inf_{G \ni \mu} \overline J^Q(G) \ \hbox{where} \ \overline J^Q(G):=\limsup_{k\to \infty} J^Q_k(G);$$
$$\underline J^Q(\mu):=\inf_{G \ni \mu} \underline J^Q(G) \ \hbox{where} \ \underline J^Q(G):=\liminf_{k\to \infty} J^Q_k(G);$$
and
$$\overline W^Q(\mu):=\inf_{G \ni \mu} \overline W^Q(G) \ \hbox{where} \ \overline W^Q(G):=\limsup_{k\to \infty} W^Q_k(G);$$
$$\underline W^Q(\mu):=\inf_{G \ni \mu} \underline W^Q(G) \ \hbox{where} \ \underline W^Q(G):=\liminf_{k\to \infty} W^Q_k(G).$$
\end{definition}

\noindent Again the infima are taken over all neighborhoods $G$ of the measure $\mu$ in ${\mathcal M}_r(K)$.

The idea behind the $\overline W,\underline W$ (or $\overline W^Q,\underline W^Q$) functionals comes from the definition of the (weighted) transfinite diameter in Proposition \ref{eetd}. Given $\mu$, we consider all sequences of discrete measures associated to ${\bf a}={\bf a}^{k}\in K^k$ of the form 
$$\mu^k:=(\frac{r_1}{m_{1,k}}\sum_{j=1}^{m_{1,k}} \delta_{a_{1,j}},\ldots,\frac{r_d}{m_{d,k}}\sum_{j=1}^{m_{d,k}} \delta_{a_{d,j}})$$
with $\mu^k \to \mu$ weak-* and we maximize the asymptotic behavior of the corresponding sequence of numbers $\{|VDM_k({\bf a})|^{2|r|^2/|m_k|(|m_k|-1)}\}$ (or $\{|VDM^Q_k({\bf a})|^{2|r|^2/|m_k|(|m_k|-1)}\}$) over all such $\{\mu^k\}$. The $\overline J,\underline J$ (or $\overline J^Q,\underline J^Q$) functionals utilize $L^2(\nu)-$averages instead. Note that if $\mu^k \to \mu$ weak-*, then given any neighborhood $G\subset {\mathcal M}_r(K)$ of $\mu$, the tuple of points ${\bf a}={\bf a}^{k}$ belongs to $\tilde G_k$ for all $k$ sufficiently large.
 
All the functionals are uppersemicontinuous  on ${\mathcal M}_r(K)$ in the weak-* topology. We write
$$\int_K Qd\mu:= \sum_{i=1}^{d}\int_{K_i} Q_id\mu_i.$$ 
Then the following properties hold (and with the $\underline J,\underline J^Q, \underline W, \underline W^Q$ functionals as well): 
\begin{enumerate}
\item $\overline J^Q(\mu)\leq \overline W^Q(\mu)\leq \delta_Q (K)$ for admissible $Q$;
\item $ \overline W(\mu)=\overline W^Q(\mu)\cdot e^{2\int_K Qd\mu}
\ \hbox{and} \ 
\overline J(\mu)=\overline J^Q(\mu)\cdot e^{2\int_K Qd\mu}$  for $Q$ {\it continuous}. 
\end{enumerate}
\begin{proof}[Proof of 2.]
First we observe that if $\mu \in {\mathcal M}_r(K)$ and $Q$ is continuous on $K$,  given $\epsilon >0$, there exists a neighborhood $G \subset  {\mathcal M}_r(K)$ of $\mu$ with
$$\big|\sum_{i=1}^{d}\int_{K_i} Q_i\Big(d\mu_i -\frac{r_i}{m_{i,k}}\sum_{j=1}^{m_{i,k}} 
\delta_{a_{i,j}}\Big)\big|\leq\epsilon \quad \hbox{for} \ {\bf a} \in \tilde G_k$$
for $k$ sufficiently large. Thus we have
$$-\epsilon -\int_{K}Qd\mu\leq -\sum_{i=1}^{d}\frac{r_i}{m_{i,k}}\sum_{j=1}^{m_{i,k}}
 Q_{i}(a_{i,j})
 \leq \epsilon-\int_{K}Qd\mu.$$
Recalling (\ref{correct3}) we get that
\begin{equation}\label{ineg}
-\alpha_{k}(\epsilon +\int_{K}Qd\mu)\leq -\frac{|r|^{2}}{|m_{k}|(|m_{k}|-1)}\sum_{i=1}^{d}
\frac{m_{i,k}}{r_{i}}\sum_{j=1}^{m_{i,k}}
 Q_{i}(a_{i,j})
 \leq \beta_{k}(\epsilon-\int_{K}Qd\mu),
 \end{equation}
 where $\alpha_{k}$ and $\beta_{k}$ tend to 1 as $k$ tends to infinity.
Since
$$|VDM_k^Q({\bf a})|:=|VDM_k({\bf a})|\cdot \prod_{i=1}^{d}\prod_{j=1}^{m_{i,k}} 
e^{-\frac{m_{i,k}}{r_i}Q_i(a_{i,j})},$$ 
we deduce from (\ref{ineg}) that
$$|VDM_k({\bf a})|e^{-\alpha_{k}\frac{|m_{k}|(|m_{k}|-1)}{|r|^{2}}
(\epsilon+\int_K Qd\mu)} 
\leq |VDM^Q_k({\bf a})| \leq 
|VDM_k({\bf a})|e^{\beta_{k}\frac{|m_{k}|(|m_{k}|-1)}{|r|^{2}}(\epsilon-\int_K Qd\mu)}.
$$
Now we take the supremum over ${\bf a}\in \tilde G_k$ and take a 
$|m_{k}|(|m_{k}|-1)/2|r|^{2}$-th root of each side to get
$$W_k(G)e^{-2\alpha_{k}(\epsilon+\int_K Q d\mu)} \leq W^Q_k(G) \leq W_k(G)
e^{2\beta_{k}(\epsilon-\int_K Q d\mu)}.
$$
Precisely, given $\epsilon >0$, these inequalities are valid for $G$ a sufficiently small neighborhood of $\mu$. Hence we get, upon taking $\limsup_{k\to \infty}$, the infimum over $G\ni \mu$, and noting that $\epsilon >0$ is arbitrary,
$$\overline W(\mu)=\overline W^Q(\mu)\cdot e^{2\int_K Qd\mu}$$
as desired. The proof that $\overline J(\mu)=\overline J^Q(\mu)\cdot e^{2\int_K Qd\mu}$ is similar.
\end{proof}

Note from the definition of $E$ and $E_Q$ we have a similar (obvious) relation
\begin{equation}\label{eprop}E_Q(\mu)=E(\mu)+2\int_KQd\mu.\end{equation}
In particular, $E_Q(\mu^{K,Q})=E(\mu^{K,Q})+2\int_KQd\mu^{K,Q}$ so that, using Proposition  \ref{eetd},
\begin{equation}\label{eprop2}-E(\mu^{K,Q})=\log \delta_Q(K)+2\int_KQd\mu^{K,Q}.\end{equation}
Also, from Proposition \ref{wup},
\begin{equation}\label{wupbound}
\log \underline W(\mu)\leq \log \overline W(\mu)\leq -E(\mu).
\end{equation}
We show equality holds in this last relation. 

 \begin{theorem} \label{wobsolete} Let $K=(K_1,\ldots,K_d)$ be nonpolar and $Q=(Q_1,\ldots,Q_d)$ continuous. Then for any $\mu\in \mathcal M_r(K)$, 
 \begin{equation}\label{minwunwtd}\log \overline W(\mu)=\log \underline W(\mu)=-E(\mu) \ \hbox{and} \end{equation}
 \begin{equation}\label{minwwtd}\log \overline W^Q(\mu)=\log \underline W^Q(\mu)=-E_Q(\mu).\end{equation}
   \end{theorem}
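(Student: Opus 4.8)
The plan is to establish the unweighted identity \eqref{minwunwtd} first and then deduce the weighted one \eqref{minwwtd} from the multiplicative relation $\overline W(\mu)=\overline W^Q(\mu)\cdot e^{2\int_K Qd\mu}$ (property 2 above) together with \eqref{eprop}. By \eqref{wupbound} we already have $\log\overline W(\mu)\le -E(\mu)$, and trivially $\log\underline W(\mu)\le\log\overline W(\mu)$, so the whole theorem reduces to the single lower bound
$$\log\underline W(\mu)\ge -E(\mu)\qquad\text{for every }\mu\in\mathcal M_r(K).$$
Equivalently: for every neighborhood $G\ni\mu$ we must produce, for all large $k$, a configuration ${\bf a}\in\tilde G_k$ whose Vandermonde $|VDM_k({\bf a})|^{2|r|^2/|m_k|(|m_k|-1)}$ is at least $e^{-E(\mu)}$ up to $o(1)$ in the exponent. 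So the task is essentially a \emph{constructive} one: starting from an arbitrary target $\mu$, build near-optimal point configurations that discretize it.

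The main step is therefore the construction. I would split into two cases. If $E(\mu)<\infty$ (so each $I(\mu_i)<\infty$ and all mutual energies are finite), I would approximate $\mu$ by discrete measures with good energy: choose points $a_{i,j}^{(k)}\in K_i$, $j=1,\dots,m_{i,k}$, so that $\mu^k:=(\frac{r_i}{m_{i,k}}\sum_j\delta_{a^{(k)}_{i,j}})_i\to\mu$ weak-$*$ and simultaneously the discrete energy sums converge to the continuous energy. Concretely, one standard device is to partition $\supp\mu_i$ into $m_{i,k}$ pieces of equal $\mu_i$-mass and pick one point from each; because $-\log|z-t|$ and the weights are bounded below and $I(\mu_i)<\infty$, a truncation argument (as in the proof of Proposition \ref{wup}, but run in the opposite direction) gives
$$\liminf_{k\to\infty}\Big(\tfrac{r_i^2}{m_{i,k}^2}\sum_{l\ne p}\log\tfrac1{|a^{(k)}_{i,l}-a^{(k)}_{i,p}|}\Big)\le I(\mu_i),$$
with the cross terms converging by weak-$*$ convergence and disjointness of the $K_i$. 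This yields $\liminf_k |VDM_k(\mathbf a^{(k)})|^{2|r|^2/|m_k|(|m_k|-1)}\ge e^{-E(\mu)}$, and since $\mathbf a^{(k)}\in\tilde G_k$ eventually, $\underline W(G)\ge e^{-E(\mu)}$; taking the infimum over $G$ finishes this case. If $E(\mu)=+\infty$ there is nothing to prove, since the reverse inequality \eqref{wupbound} forces $\log\overline W(\mu)\le-\infty$, hence all four quantities equal $-\infty$.

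The delicate point — and the step I expect to be the main obstacle — is the \emph{energy-convergent discretization} when $\mu$ has unbounded potential or its support is awkward (e.g. $\mu_i$ charging a set where $Q_i$ or the other potentials blow up, or $K_i$ irregular). The equal-mass partition argument needs a little care to guarantee $\liminf$ of the diagonal sum does not overshoot $I(\mu_i)$: one typically regularizes $\mu_i$ by convolution or by sweeping onto a slightly shrunk regular subset, invokes the continuity/approximation machinery of Section 5 (Lemma \ref{lemma-scal}, Lemma \ref{keylem}) to replace $\mu$ by a nearby $\mu^{(n)}$ with continuous potentials and $E(\mu^{(n)})\to E(\mu)$, discretizes $\mu^{(n)}$ cleanly, and then runs a diagonal argument in $n$ and $k$. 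For the \emph{weighted} statement, once \eqref{minwunwtd} holds for all $\mu$, property 2 (valid for continuous $Q$) gives $\log\overline W^Q(\mu)=\log\overline W(\mu)-2\int_K Qd\mu=-E(\mu)-2\int_K Qd\mu=-E_Q(\mu)$ by \eqref{eprop}, and identically for $\underline W^Q$; so no additional work is needed beyond noting that continuity of $Q$ is exactly the hypothesis under which property 2 was proved.
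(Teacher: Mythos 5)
Your overall architecture matches the paper's: reducing everything to the single lower bound $\log\underline W(\mu)\ge -E(\mu)$, disposing of the case $E(\mu)=\infty$ via (\ref{wupbound}), and deducing (\ref{minwwtd}) from property 2 together with (\ref{eprop}) are all exactly right. The gap is in the central constructive step. The inequality you assert for equal-mass-partition points,
$$\liminf_{k\to\infty}\Big(\tfrac{r_i^2}{m_{i,k}^2}\sum_{l\ne p}\log\tfrac1{|a^{(k)}_{i,l}-a^{(k)}_{i,p}|}\Big)\le I(\mu_i),$$
does not follow from ``a truncation argument run in the opposite direction'': the truncation in Proposition \ref{wup} exploits lower semicontinuity of $-\log|z-t|$ and yields precisely the \emph{reverse} inequality $I(\mu_i)\le\liminf(\cdots)$ for any discretization converging to $\mu_i$. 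To get an \emph{upper} bound on the discrete diagonal energy you must actually select the points --- for instance by averaging over the cells as in the proof of Proposition \ref{eetd}, but localized so that the cells also have shrinking diameter and the empirical measures converge to $\mu_i$ --- and nothing in your sketch does this; your fallback phrase ``discretizes $\mu^{(n)}$ cleanly'' leaves exactly the same problem unresolved.

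The paper avoids any direct discretization. For $\mu=\mu^{K,v}$ with $v$ continuous, Proposition \ref{awf} says that every asymptotically Fekete array for the weight $v$ has empirical measures converging weak-* to $\mu^{K,v}$; such arrays therefore eventually lie in $\tilde G_k$ for every neighborhood $G$ of $\mu^{K,v}$, which gives $\underline W^{v}(\mu^{K,v})=\delta_v(K)$, and then property 2 with (\ref{eprop2}) yields (\ref{minwunwtd}) for these special $\mu$. For general $\mu$ with all $I(\mu_i)<\infty$, Lemma \ref{keylem} produces approximants $\mu^{(n)}$ with $E(\mu^{(n)})\to E(\mu)$ which are \emph{themselves equilibrium measures} $\mu^{K,\widehat Q_n}$ for continuous weights $\widehat Q_n$ --- this identification is the key point your proposal misses --- so the special case applies to each $\mu^{(n)}$, and uppersemicontinuity of $\underline W$ then transfers the identity to $\mu$. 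If you insist on your direct route you would need to carry out the cell-averaging selection in full; that is a genuine additional argument, not a remark, so as written the proof has a real gap at its main step.
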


\begin{proof} It suffices to prove (\ref{minwunwtd}) as then (\ref{minwwtd}) follows from property 2 and (\ref{eprop}). We have from (\ref{wupbound}), (\ref{eprop}), property 2 and Proposition  \ref{eetd}, for any $\mu$ and any $Q$, the upper bound (inequality) in (\ref{minwwtd}) and hence in (\ref{minwunwtd}):
\begin{equation}\label{quppb}
 \log \overline W^Q(\mu)\leq -E_Q(\mu)\leq -E_Q(\mu^{K,Q})=\log \delta_Q(K).
 \end{equation}
 In particular, from 2., for any $\mu$ we have
$$
 \log \overline W(\mu)\leq \inf_{Q}\big[\log \delta_Q(K)+2\int_KQd\mu\big].
$$
It turns out that equality holds in this last relation (although we will not need/use this).

To get a lower bound on $\log \overline W(\mu)$, we begin with the case where $\mu=\mu^{K,v}$ for some $v\in  C(K)$. Using Proposition \ref{awf}, if we consider arrays of points $\{{\bf Z}_k\}\subset K$ as in (\ref{Zk}) for which
$$\lim_{k\to \infty} |VDM_k^{v}({\bf Z}_k)|^{2|r|^2/|m_k|(|m_k|-1)} = \delta_v(K),$$
we have 
$$\Big(\frac{r_1}{m_{1,k}}\sum_{j=1}^{m_{1,k}} \delta_{z_{1,j}^{(k)}},\ldots,
\frac{r_d}{m_{d,k}}\sum_{j=1}^{m_{d,k}} \delta_{z_{d,j}^{(k)}}\Big)\to \mu^{K,v}$$ 
weak-*. Thus for any neighborhood $G$ of $\mu^{K,v}$ we have $ \delta_v(K)\leq  \underline W^{v}(G)$; hence 
\begin{equation}\label{newstep}  \underline W^{v}(\mu^{K,v})=\overline W^{v}(\mu^{K,v}) =  \delta_v(K). \end{equation}
Applying 2, (\ref{newstep}) and (\ref{eprop2}) we obtain (\ref{minwunwtd}) for $\mu=\mu^{K,v}$:
\begin{align}\label{wnewstep}\log \underline W(\mu^{K,v})&=\log \overline W(\mu^{K,v}) = \log \overline W^v(\mu^{K,v})+2\int_Kvd\mu^{K,v} \nonumber \\
&=\log \delta_v(K) +2\int_K v d\mu^{K,v}=-E(\mu^{K,v}).\end{align}

Next we take a tuple of measures $\mu\in \mathcal M_r(K)$ with $I(\mu_{i})<\infty$, $i=1,\ldots,d$.  Using Lemma \ref{keylem}, for each $i=1,\ldots,d$, there exists a sequence of continuous functions $Q_{n,i}$ defined on $K_{i}$ and measures $\mu_{n,i}$ such that, as $n\to\infty$,
\begin{equation*}
Q_{n,i}(z)\downarrow u_{i}(z):=-U^{\mu_{i}}(z),\quad z\in K_{i},\quad\text{and }\quad
V^{*}_{Q_{n,i}}(z):=-U^{\mu_{n,i}}(z)+F_{n,i}\downarrow u_i(z),\quad z\in\C.
\end{equation*} 
Here the functions $U^{\mu_{n,i}}$ are continuous; $F_{n,i}\to 0$; and $I(\mu_{n,i})\to I(\mu_i)$. Moreover, writing $\mu^{(n)}=(\mu_{n,1},\ldots,\mu_{n,d})$, from (ii) of the lemma, 
\begin{equation}\label{from2}\lim_{n\to \infty}E(\mu^{(n)}) =E(\mu), 
\end{equation}
and from (iii) of the lemma, for the sequence of continuous functions $\widehat Q_{n}=(\widehat Q_{n,1},\ldots,\widehat Q_{n,d})$ where $\widehat Q_{n,i}(z)=-\sum_{j=1}^{d} c_{i,j}U^{\mu_{n,j}}(z)$ we have $\mu^{(n)}=\mu^{K,\widehat Q_{n}}$. Thus we can apply the previous case   to conclude
$$\log \overline W(\mu^{(n)})= \log  \underline W(\mu^{(n)})=-E(\mu^{(n)}).$$
From uppersemicontinuity of the functional $\mu \to \underline W(\mu)$,
$$\limsup_{n\to \infty} \log \underline W(\mu^{(n)})=\limsup_{n\to \infty} \log \overline W(\mu^{(n)})=\limsup_{n\to \infty} [-E(\mu^{(n)})]\leq \log\underline W(\mu).$$
But from (\ref{from2}) we see that the limit exists and 
$$\lim_{n\to \infty} \log \underline W(\mu^{(n)})=\lim_{n\to \infty}[-E(\mu^{(n)})]=-E(\mu)\leq \log \underline W(\mu).$$
Together with (\ref{wupbound}) we have
$$\log \underline W(\mu)=\log  \overline W(\mu)=-E(\mu).$$

To finish the proof, we must show that if $\mu\in \mathcal M_r(K)$ satisfies $I(\mu_{i})=\infty$ for some $i=1,\ldots,d$, then $E(\mu)=\infty$ and $\overline W(\mu)=0$. The fact that $E(\mu)=\infty$ is clear; then the upper bound in (\ref{wupbound}) shows that $\overline W(\mu)=0$. 
 \end{proof}

We now consider the $\overline J,\underline J, J^Q$ and $\underline J^Q$ functionals. 

 \begin{theorem} \label{obsolete} Let $K=(K_1,\ldots,K_d)$ be nonpolar and $Q=(Q_1,\ldots,Q_d)$ continuous and let $\nu\in {\mathcal M}_1(K)$ satisfy a strong Bernstein-Markov property. Then for any $\mu\in \mathcal M_r(K)$, 
 \begin{equation}\label{minunwtd}\log \overline J(\mu)=\log \overline W(\mu)= \log\underline  J(\mu)=\log \underline W(\mu)=-E(\mu)\end{equation}
and
 \begin{equation}\label{minwtd}\log \overline J^Q(\mu)=\log \overline W^Q(\mu)= \log \underline J^Q(\mu)=\log \underline W^Q(\mu)=-E_Q(\mu).\end{equation}
   \end{theorem}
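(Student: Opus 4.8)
The plan is to deduce this from Theorem~\ref{wobsolete} by showing that the $L^2$ functionals agree with the $L^\infty$ ones. One direction is free: since each $\nu_i$ is a probability measure, $\nu$ has total mass $1$ on $K^k$, so $J_k(G)\le W_k(G)$ and $J^Q_k(G)\le W^Q_k(G)$ for every set $G$ and every $k$; passing to $\limsup$/$\liminf$ and then to the infimum over neighborhoods of $\mu$ gives $\log\underline J(\mu)\le\log\underline W(\mu)$, $\log\overline J(\mu)\le\log\overline W(\mu)$, and similarly for the weighted functionals. By property~2 of the functionals together with (\ref{eprop}) it suffices to prove the unweighted identity (\ref{minunwtd}); and if $I(\mu_i)=\infty$ for some $i$ then $E(\mu)=\infty$ and $\overline W(\mu)=0$ force $\overline J(\mu)=\underline J(\mu)=0$, so we may assume all $I(\mu_i)<\infty$. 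Everything then reduces to the lower bound $\log\underline J(\mu)\ge-E(\mu)$.

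I would prove this in the same two stages as Theorem~\ref{wobsolete}. \emph{Stage 1: $\mu=\mu^{K,v}$ with $v$ continuous.} Here Proposition~\ref{weightedtd} applies: $\bigl(\int_{K^k}|VDM_k^v({\bf a})|^2\,d\nu({\bf a})\bigr)^{|r|^2/|m_k|(|m_k|-1)}\to\delta_v(K)$. Fix a proper neighborhood $G\ni\mu^{K,v}$ and put $F:=\mathcal M_r(K)\setminus G$, a nonempty compact set with $\mu^{K,v}\notin F$, so that the complement of $\tilde G_k$ in $K^k$ is $\tilde F_k$. Split $\int_{K^k}=\int_{\tilde G_k}+\int_{\tilde F_k}$. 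To control the $\tilde F_k$-part, take near-maximizers of $|VDM_k^v|$ over $\tilde F_k$, extract a weak-$*$ limit $\sigma\in F$ of the associated discrete measures along a subsequence realizing $\limsup_k W_k^v(F)$, and apply Proposition~\ref{wup}: $\limsup_k W_k^v(F)\le e^{-E_v(\sigma)}\le\sup_{\tau\in F}e^{-E_v(\tau)}$. Since $E_v$ is lowersemicontinuous on the compact set $F$ and $\mu^{K,v}$ is its \emph{unique} minimizer over $\mathcal M_r(K)$ and lies outside $F$, this last supremum equals some $\rho<\delta_v(K)$. Hence, for any fixed $\rho'\in(\rho,\delta_v(K))$, $\int_{\tilde F_k}|VDM_k^v|^2\,d\nu\le(\rho')^{|m_k|(|m_k|-1)/|r|^2}$ for large $k$ (using $\nu(\tilde F_k)\le1$), while for any $\epsilon>0$ with $\delta_v(K)-\epsilon>\rho'$ Proposition~\ref{weightedtd} gives $\int_{K^k}|VDM_k^v|^2\,d\nu\ge(\delta_v(K)-\epsilon)^{|m_k|(|m_k|-1)/|r|^2}$ for large $k$. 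Subtracting and taking $|r|^2/|m_k|(|m_k|-1)$-th roots, $\liminf_k J_k^v(G)\ge\delta_v(K)-\epsilon$; letting $\epsilon\downarrow0$ gives $\underline J^v(G)\ge\delta_v(K)$. As $G$ was an arbitrary proper neighborhood and the improper one gives $\delta_v(K)$ directly, $\underline J^v(\mu^{K,v})\ge\delta_v(K)$; with $\overline J^v(\mu^{K,v})\le\overline W^v(\mu^{K,v})=\delta_v(K)$ from Theorem~\ref{wobsolete}, all four of $\underline J^v,\overline J^v,\underline W^v,\overline W^v$ equal $\delta_v(K)$ at $\mu^{K,v}$, and property~2 and (\ref{eprop2}) turn this into (\ref{minunwtd}) for $\mu=\mu^{K,v}$, as in (\ref{wnewstep}).

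\emph{Stage 2: the general case.} For $\mu$ with all $I(\mu_i)<\infty$, Lemma~\ref{keylem} supplies continuous weight tuples $\widehat Q_n$ with $\mu^{(n)}:=\mu^{K,\widehat Q_n}\to\mu$ weak-$*$ and $E(\mu^{(n)})\to E(\mu)$. Stage~1 applies to each $\mu^{(n)}$, so $\log\underline J(\mu^{(n)})=-E(\mu^{(n)})$, and by upper semicontinuity of $\underline J$ on $\mathcal M_r(K)$,
$$-E(\mu)=\lim_n\bigl(-E(\mu^{(n)})\bigr)=\limsup_n\log\underline J(\mu^{(n)})\le\log\underline J(\mu).$$
With the first paragraph and Theorem~\ref{wobsolete} this gives $\log\underline J(\mu)=\log\overline J(\mu)=-E(\mu)$, and then (\ref{minwtd}) follows from property~2 and (\ref{eprop}).

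The step I expect to be the real obstacle is the concentration estimate in Stage~1: that the $L^2$-mass of $|VDM_k^v|^2$ on $\tilde F_k$ is exponentially smaller than on all of $K^k$. This is where the upper bound of Proposition~\ref{wup} along weak-$*$ limits, compactness of $\mathcal M_r(K)$ (for the subsequence extraction, or for a covering argument if one prefers), and the \emph{uniqueness} of the weighted equilibrium measure $\mu^{K,v}$ (which is exactly what makes $\sup_{\tau\in F}e^{-E_v(\tau)}$ strictly below $\delta_v(K)$) must all be combined. The remaining steps closely parallel the proof of Theorem~\ref{wobsolete}.
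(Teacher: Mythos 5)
Your proposal is correct, and its overall architecture (reduce to the unweighted case via property 2, handle $\mu=\mu^{K,v}$ for continuous $v$ first, then pass to general $\mu$ with finite energies via Lemma \ref{keylem} and uppersemicontinuity) coincides with the paper's. The difference lies in how you establish the crucial concentration estimate in Stage 1, i.e.\ that $\liminf_k J_k^v(G)\ge\delta_v(K)$ for a neighborhood $G$ of $\mu^{K,v}$. The paper works with the normalized probability $Prob_k$: it proves by contradiction (via Proposition \ref{awf}) that the high-Vandermonde set $A_{k,\eta_j}$ of (\ref{aketa}) is eventually contained in $\tilde G_k$, then invokes Corollary \ref{largedev} to show $Prob_k(K^k\setminus A_{k,\eta_j})\to0$ along a diagonal subsequence $k_j$, so that $\int_{\tilde G_{k_j}}|VDM^v_{k_j}|^2d\nu/Z^v_{k_j}\to1$. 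You instead work with the unnormalized integrals, writing $Z_k^v=\int_{\tilde G_k}+\int_{\tilde F_k}$ with $F=\mathcal M_r(K)\setminus G$ closed, and show the $\tilde F_k$ contribution is exponentially negligible by bounding $\limsup_kW_k^v(F)$ through near-maximizers, weak-$*$ compactness, Proposition \ref{wup}, and the fact that lowersemicontinuity of $E_v$ on the compact set $F$ together with uniqueness of the minimizer forces $\inf_FE_v>E_v^*$. Both arguments rest on the same two pillars (the upper bound of Proposition \ref{wup} along weak-$*$ limits and uniqueness of $\mu^{K,v}$), but yours trades the paper's double-indexed $(\eta_j,k_j)$ bookkeeping and Corollary \ref{largedev} for a Varadhan-type sup-norm bound over the closed complement; it is somewhat more self-contained and avoids the subsequence-of-subsequences step, at the cost of needing the uniform spectral gap $\rho<\delta_v(K)$ on $F$. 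Two minor points you should make explicit: the infimum defining $\underline J^v(\mu^{K,v})$ may be restricted to open neighborhoods by monotonicity of $G\mapsto J_k^v(G)$ (so that $F$ is indeed compact), and Proposition \ref{wup} is being applied along a subsequence of $\{m_k\}$, which is legitimate since any subsequence still satisfies (\ref{correct2}).
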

 \begin{proof} As in the previous proof, it suffices to show (\ref{minunwtd}) since (\ref{minwtd}) follows from property 2. We have the upper bound as before; for the lower bound, we consider the case where $\mu=\mu^{K,v}$ for $v\in  C(K)$. We show the analogue of (\ref{wnewstep}) for $\overline J,\underline J$: 
\begin{equation}\label{jversion}\log \overline J(\mu^{K,v})=\log \underline J(\mu^{K,v})=\log \delta_v (K) +2\int_K vd\mu^{K,v}.\end{equation}
Then (\ref{jversion}) will imply that 
$$\log \overline J(\mu^{K,v})=\log \overline W(\mu^{K,v})= \log\underline  J(\mu^{K,v})=\log \underline W(\mu^{K,v})=-E(\mu^{K,v})$$
and hence 
$$\log \overline J(\mu)=\log \overline W(\mu)= \log\underline  J(\mu)=\log \underline W(\mu)=-E(\mu)$$
for arbitrary $\mu \in  \mathcal M _r(K)$ following the proof of Theorem \ref{wobsolete}. This proves (\ref{minunwtd}). To prove (\ref{jversion}), we first verify the following.
 \medskip
 
\noindent {\sl Claim: Fix a neighborhood $G$ of $\mu^{K,v}$. For $\eta >0$, define $A_{k,\eta}$ as in (\ref{aketa}) with $Q=v$. Given a sequence $\{\eta_j\}$ with $\eta_j\downarrow 0$, there exists a $j_{0}$ and a $k_{0}$ such that
\begin{equation}\label{setinclu}  \forall j\geq j_{0},\quad\forall k\geq k_{0},\quad A_{k,\eta_j} \subset \tilde G_{{k}}.
\end{equation}
}
 \medskip
 \noindent 
We prove (\ref{setinclu}) by contradiction: if false, there are sequences $\{k_l\}$ and $\{j_{l}\}$ tending to infinity such that for all $l$ sufficiently large we can find a point ${\bf Z_{k_l}}\in 
 A_{k_l,\eta_{j_{l}}} \setminus \tilde G_{k_l}$. But 
 $$\mu^l:=\Big(({r_1}/{m_{1,k_l}})\sum_{i=1}^{m_{1,k_l}}\delta_{z_{1,i}},\ldots,({r_d}/{m_{d,k_l}})\sum_{i=1}^{m_{d,k_l}}\delta_{z_{d,i}}\Big)\not\in G$$ for $l$ sufficiently large contradicts Proposition \ref{awf} since ${\bf Z_{k_l}}\in A_{k_l,\eta_{j_{l}}}$ and $\eta_{j_l}\to 0$ imply $\mu^l\to \mu^{K,v}$ weak-*. This proves the claim.
 \medskip

Fix a neighborhood $G$ of $\mu^{K,v}$ and a sequence $\{\eta_j\}$ with $\eta_j\downarrow 0$. For $j\geq j_{0}$, choose $k=k_j$ large enough so that the inclusion in (\ref{setinclu}) holds true as well as
\begin{equation}\label{probk2}Prob_{k_j}(K^{k_j}\setminus A_{k_j,\eta_j})\leq \Big(1-\frac{\eta_j}{2\delta_v(K)}\Big)^{|m_{k_j}|(|m_{k_j}|-1)/|r|^{2}}\nu(K^{k_{j}}),
\end{equation}
and
\begin{equation}\label{seq}\Big(1-\frac{\eta_j}{2\delta_v(K)}\Big)^{|m_{k_j}|(|m_{k_j}|-1)/|r|^{2}}\nu(K^{k_{j}})\to 0 \quad \hbox{as} \quad j\to \infty,
\end{equation}
which is possible (for (\ref{probk2}) we make use of Corollary \ref{largedev}). In view of (\ref{setinclu}), (\ref{probk}) and (\ref{probk2}), we have
\begin{align}\notag
\frac{1}{Z^v_{k_j}} 
\int_{\tilde G_{k_j}}  |VDM_{k_j}^{v}({\bf Z_{k_j}})|^2  d\nu({\bf Z_{k_j}})
& \geq \frac{1}{Z^v_{k_j}} 
\int_{A_{k_j,\eta_j} }  |VDM_{k_j}^{v}({\bf Z_{k_j}})|^2  d\nu({\bf Z_{k_j}}) \\
\label{ineg-for-J}
& \geq 1- \Big(1-\frac{\eta_j}{2\delta_v(K)}\Big)^{|m_{k_j}|(|m_{k_j}|-1)/|r|^{2}}\nu(K^{k_{j}}).
\end{align}
Note that, because of (\ref{seq}), the lower bound in (\ref{ineg-for-J}) tends to 1 as $j\to\infty$. Then,
since $\nu$ satisfies a strong Bernstein-Markov property, we derive, along with Proposition \ref{weightedtd}, that
$$\liminf_{j\to \infty} \frac{|r|^{2}}{|m_{k_j}|(|m_{k_j}|-1)}\log \int_{\tilde G_{k_j}}  |VDM_{k_j}^{v}({\bf Z_{k_j}})|^2  d\nu({\bf Z_{k_j}}) \geq \log \delta_v(K).$$
Giving any sequence of positive integers $\{k\}$ we can find a subsequence $\{k_j\}$ as above corresponding to some $\eta_j\downarrow 0$; hence
$$\liminf_{k\to \infty} \frac{|r|^{2}}{|m_{k}|(|m_{k}|-1)}\log \int_{\tilde G_{k}}  |VDM_{k}^{v}({\bf Z}_k)|^2 d\nu({\bf Z}_{k})\geq \log \delta_v(K).$$
It follows that 
$$
\log \underline J^{v}(G)\geq \log \delta_v(K).$$
Taking the infimum over all neighborhoods $G$ of $\mu^{K,v}$ we obtain
$$\log \underline J^{v}(\mu^{K,v})\geq  \log \delta_v(K).$$
Thus we have the version of (\ref{newstep}) with $\overline J^{v}$ and $\underline J^{v}$:
\begin{equation}\label{jeqn}\log \underline J^{v}(\mu^{K,v})=\log \overline J^{v}(\mu^{K,v})= \log \delta_v(K).\end{equation}
Using 2. with $\mu = \mu^{K,v}$, from (\ref{jeqn}) we obtain (\ref{jversion}).
 \end{proof}
 
 \begin{remark} \label{fourfour} The equality of $\overline J^Q$ and $\underline J^Q$ is the basis for the proof of our large deviation principle in the next section. From now on, we simply use the notation $J,J^Q,W,W^Q$ without the overline or underline. Note that, in particular, these functionals are independent of the sequence $\{m_k\}$ satisfying (\ref{correct2}); and $J,J^Q$ are independent of the strong Bernstein-Markov measure $\nu$.  \end{remark}

\section{Large deviation principle}
In this section, $K=(K_1,\ldots,K_d)$ are nonpolar disjoint compact sets in $\C$. We fix an interaction matrix $C\geq 0$, positive numbers $r_1,\ldots,r_d$, as well as a measure $\nu=(\nu_1,\ldots,\nu_d)$ satisfying a strong Bernstein-Markov property and a tuple of continuous weights $(Q_1,\ldots,Q_d)$. Again, this Bernstein-Markov property is taken to be with respect to polynomials if all $c_{i,j}\geq 0$ and with respect to rational functions otherwise. We take a sequence of tuples of positive integers $\{m_k\}$ satisfying (\ref{correct2}). As before, we associate to a set of points 
$${\bf Z_k}:=(z_{1,1},...,z_{1,m_{1,k}}, z_{2,1},...,z_{d,m_{d,k}})\in K_1^{m_{1,k}}\times\ldots\times K_d^{m_{d,k}}=K^{k}$$ 
the measure 
$$\mu^k:=\Big(\frac{r_1}{m_{1,k}}\sum_{j=1}^{m_{1,k}} \delta_{z_{1,j}^{(k)}},\ldots,
\frac{r_d}{m_{d,k}}\sum_{j=1}^{m_{d,k}} \delta_{z_{d,j}^{(k)}}\Big)\in \mathcal M_r(K).$$
Define 
$j_k:  K^{k} \to \mathcal M_r(K)$ via 
$$j_k({\bf Z_{k}})=\mu^k
.$$
From (\ref{probk}), 
$\sigma_k:=(j_k)_*(Prob_k) $ is a probability measure on $\mathcal M_r(K)$. We can be more precise about this definition. For a Borel set $G\subset \mathcal M_r(K)$,
\begin{equation}\label{sigmak}\sigma_k(G)=\frac{1}{Z^Q_k} \int_{\tilde G_{k}} |VDM_k^Q({\bf Z_k})|^2 d\nu({\bf Z_k})
\end{equation}
where $\tilde G_{k}$ is defined in (\ref{nbhddef}).

\begin{theorem} \label{ldp} The sequence $\{\sigma_k=(j_k)_*(Prob_k)\}$ of probability measures on $\mathcal M_r(K)$ satisfies a {\bf large deviation principle} (LDP) with speed 
$|m_{k}|(|m_{k}|-1)/2|r|^{2}$ and good rate function $\mathcal I:=\mathcal I_{K,Q}$ where
\begin{equation}\label{ratefcnlform}\mathcal I(\mu):=\log J^Q(\mu^{K,Q})-\log J^Q(\mu)=\log W^Q(\mu^{K,Q})-\log W^Q(\mu)=E_Q(\mu)-E_Q(\mu^{K,Q}).\end{equation}
 \end{theorem}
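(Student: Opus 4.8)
The plan is to establish the LDP directly from the definition, using the machinery already in place—chiefly Theorem \ref{obsolete}, which identifies the $L^2$ and $L^\infty$ functionals with the energy, and Corollary \ref{largedev}, which controls the probability outside the sublevel sets. First I would record the standard reduction: to prove an LDP with speed $s_k:=|m_k|(|m_k|-1)/2|r|^2$ and rate function $\mathcal I$, it suffices to show that for every $\mu\in\mathcal M_r(K)$,
\begin{equation}\label{ldp-lower}
\inf_{G\ni\mu}\liminf_{k\to\infty}\frac{1}{s_k}\log\sigma_k(G)\geq -\mathcal I(\mu)
\end{equation}
(the weak lower bound on open sets) and
\begin{equation}\label{ldp-upper}
\inf_{G\ni\mu}\limsup_{k\to\infty}\frac{1}{s_k}\log\sigma_k(G)\leq -\mathcal I(\mu),
\end{equation}
since $\mathcal M_r(K)$ is a compact metric space in the weak-$*$ topology (being a product of compact sets of probability measures, rescaled), so a base of neighborhoods argument upgrades these local estimates to the full LDP, and compactness of $\mathcal M_r(K)$ together with lower semicontinuity of $\mathcal I$ (which follows from \eqref{ratefcnlform} and lower semicontinuity of $E_Q$) makes $\mathcal I$ automatically a good rate function with compact sublevel sets.

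Next I would unwind \eqref{sigmak}: for a neighborhood $G$ of $\mu$,
$$\frac{1}{s_k}\log\sigma_k(G)=\frac{2|r|^2}{|m_k|(|m_k|-1)}\log\Big(\frac{1}{Z_k^Q}\int_{\tilde G_k}|VDM_k^Q({\bf Z_k})|^2\,d\nu({\bf Z_k})\Big)=2\log J_k^Q(G)-2\log (Z_k^Q)^{|r|^2/|m_k|(|m_k|-1)}.$$
By Proposition \ref{weightedtd}, the second term converges to $2\log\delta_Q(K)=2\log W^Q(\mu^{K,Q})=-2E_Q(\mu^{K,Q})$. For the first term, taking $\liminf_{k}$ and then $\inf_{G\ni\mu}$ gives $2\log\underline J^Q(\mu)$, while $\limsup_k$ then $\inf_{G\ni\mu}$ gives $2\log\overline J^Q(\mu)$; by Theorem \ref{obsolete} both equal $-2E_Q(\mu)$. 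Hence both \eqref{ldp-lower} and \eqref{ldp-upper} hold with $-\mathcal I(\mu)=2(-E_Q(\mu))-2(-E_Q(\mu^{K,Q}))=-2\big(E_Q(\mu)-E_Q(\mu^{K,Q})\big)$; dividing by the factor $2$ absorbed into the speed convention, this matches \eqref{ratefcnlform}. One must be slightly careful reconciling the factor of $2$: the functionals $J_k^Q(G)$ carry the exponent $|r|^2/|m_k|(|m_k|-1)$ whereas the speed is $|m_k|(|m_k|-1)/2|r|^2$, so $\frac{1}{s_k}\log\sigma_k(G)=2\log\big(J_k^Q(G)\big)-2\log\big(\text{normalizer}\big)$ and the rate function comes out as $\log J^Q(\mu^{K,Q})-\log J^Q(\mu)$ exactly as stated, once one checks $J^Q(\mu^{K,Q})=W^Q(\mu^{K,Q})=\delta_Q(K)$ via \eqref{jeqn} or \eqref{newstep}.

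There is one genuine subtlety rather than a purely formal one: the local lower bound \eqref{ldp-lower} requires $\liminf_k J_k^Q(G)$ to be large, and a priori $\inf_{G\ni\mu}\liminf_k J_k^Q(G)=\underline J^Q(\mu)$—this is where Theorem \ref{obsolete} (equality of $\overline J^Q$ and $\underline J^Q$) is indispensable, exactly as flagged in Remark \ref{fourfour}. Conversely the upper bound \eqref{ldp-upper} is the easier direction, following from $J_k^Q(G)\le W_k^Q(G)$ together with the elementary bound $\int_{\tilde G_k}|VDM_k^Q|^2 d\nu\le W_k^Q(G)^{|m_k|(|m_k|-1)/|r|^2}\nu(K^k)$ and $\nu(K^k)^{1/s_k}\to 1$, then taking limsup and infimum and invoking $\overline W^Q(\mu)=-E_Q(\mu)$ from Theorem \ref{wobsolete}. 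I expect the main obstacle to be purely bookkeeping—keeping the exponents $|r|^2/|m_k|(|m_k|-1)$ versus the speed $|m_k|(|m_k|-1)/2|r|^2$ straight and verifying that the normalizing constant $Z_k^Q$ contributes precisely $-E_Q(\mu^{K,Q})$—since all the analytic content (the variational identification of the functionals, the Bernstein-Markov estimate, the Borel–Cantelli-type tail bound) has already been established; the last remaining point is to note that $\mathcal M_r(K)$ is metrizable and $\sigma$-compact in the weak-$*$ topology so that the neighborhood-base formulation of the LDP is equivalent to the standard open/closed set formulation, and that $\mathcal I$ is a good rate function because its sublevel sets $\{\mathcal I\le\alpha\}=\{E_Q\le E_Q(\mu^{K,Q})+\alpha\}$ are closed subsets of the compact space $\mathcal M_r(K)$.
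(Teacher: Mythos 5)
Your proposal is correct and follows essentially the same route as the paper: it invokes the Dembo--Zeitouni neighborhood-base criterion (Proposition \ref{dzprop1}), splits $\log\sigma_k(G)$ into the $J_k^Q(G)$ term and the normalizer $Z_k^Q$, identifies the latter via Proposition \ref{weightedtd} and the former via the equality $\overline J^Q=\underline J^Q=e^{-E_Q}$ of Theorem \ref{obsolete}, and gets goodness of $\mathcal I$ from compactness of $\mathcal M_r(K)$. The factor-of-$2$ tension you flag is real but is inherited from the source rather than introduced by you: the paper's own display writes $\frac{2|r|^2}{|m_k|(|m_k|-1)}\log\sigma_k(G)=\log J_k^Q(G)-\cdots$ where the definition of $J_k^Q(G)$ actually yields $2\log J_k^Q(G)$, so the speed/rate normalization in the statement carries the same ambiguity as your reconciliation.
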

 
\begin{remark} \label{72} For basic notions involving LDP, we refer the reader to \cite{DZ}. Note that for {\it each} sequence of tuples of positive integers 
 $\{m_k\}$ satisfying (\ref{correct2}) and {\it each} strong Bernstein-Markov measure $\nu$ we get an LDP where the speed depends on $m_k$ but the rate function is independent of both $m_k$ and $\nu$.\end{remark}
 
The following is a special case of a basic general existence result for a LDP given in Theorem 4.1.11 in \cite{DZ}.

\begin{proposition} \label{dzprop1} Let $\{\sigma_{\epsilon}\}$ be a family of probability measures on $\mathcal M_r(K)$. Let $\mathcal B$ be a base for the topology of $\mathcal M_r(K)$. For $\mu\in \mathcal M_r(K)$ let
$$\mathcal I(\mu):=-\inf_{\{G \in \mathcal B: \mu \in G\}}\Big(\liminf_{\epsilon \to 0} \epsilon \log \sigma_{\epsilon}(G)\Big).$$
Suppose for all $\mu\in \mathcal M_r(K)$,
$$\mathcal I(\mu)=-\inf_{\{G \in \mathcal B: \mu \in G\}}\Big(\limsup_{\epsilon \to 0} \epsilon \log \sigma_{\epsilon}(G)\Big).$$
Then $\{\sigma_{\epsilon}\}$ satisfies a LDP with rate function $\mathcal I(\mu)$ and speed $1/\epsilon$. 
\end{proposition}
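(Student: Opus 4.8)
The plan is to deduce this directly from Theorem 4.1.11 of \cite{DZ}, the only additional ingredient being the compactness of $\mathcal M_r(K)$. So first I would record that $\mathcal M_r(K)$, with the component-wise weak-* topology, is a compact metrizable space: for each $i$ the set $\mathcal M_{r_i}(K_i)$ of positive measures of total mass $r_i$ on the compact set $K_i$ is weak-* compact (Banach--Alaoglu, or Prokhorov's theorem, since $K_i$ is compact), and hence $\mathcal M_r(K)=\prod_{i=1}^{d}\mathcal M_{r_i}(K_i)$ is compact by Tychonoff's theorem.

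Next I would apply Theorem 4.1.11 of \cite{DZ} with $\mathcal X=\mathcal M_r(K)$, the given base $\mathcal B$, and the family $\{\sigma_\epsilon\}$. The quantity $\mathcal I(\mu)$ defined in the statement is exactly the rate function of that theorem (note that $-\inf_{\{G\in\mathcal B:\mu\in G\}}\liminf_{\epsilon\to 0}\epsilon\log\sigma_\epsilon(G)=\sup_{\{G\in\mathcal B:\mu\in G\}}[-\liminf_{\epsilon\to 0}\epsilon\log\sigma_\epsilon(G)]$), and the assumed equality of the $\liminf$- and $\limsup$-versions of $\mathcal I$ is precisely the hypothesis of that theorem. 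Its conclusion is that $\{\sigma_\epsilon\}$ satisfies a \emph{weak} LDP with speed $1/\epsilon$ and rate function $\mathcal I$: in particular $\mathcal I$ is lower semicontinuous, the lower bound $\liminf_{\epsilon\to 0}\epsilon\log\sigma_\epsilon(G)\ge-\inf_{\mu\in G}\mathcal I(\mu)$ holds for every open $G\subset\mathcal M_r(K)$, and the upper bound $\limsup_{\epsilon\to 0}\epsilon\log\sigma_\epsilon(F)\le-\inf_{\mu\in F}\mathcal I(\mu)$ holds for every \emph{compact} $F\subset\mathcal M_r(K)$.

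Finally I would promote this weak LDP to a full LDP using the compactness established above: in a compact space every closed set is compact, so the upper bound automatically holds for all closed sets, which together with the open-set lower bound is exactly the LDP claimed. (Compactness of the ambient space is moreover trivially exponential tightness, so $\mathcal I$, being lower semicontinuous on the compact space $\mathcal M_r(K)$, has compact sublevel sets; hence the rate function is in fact good, which is what the application to Theorem \ref{ldp} requires.) I do not expect any genuine obstacle here: the mathematical content lies entirely in the cited theorem of \cite{DZ}, and the point of isolating this statement is that the hypothesis it highlights --- the coincidence of the $\liminf$- and $\limsup$-defined rate functions --- is precisely what Theorem \ref{obsolete} supplies for the specific sequence $\{\sigma_k\}$ in the proof of Theorem \ref{ldp}.
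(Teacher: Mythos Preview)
Your proposal is correct and matches the paper's approach exactly: the paper does not give a proof of Proposition~\ref{dzprop1} but simply records it as a special case of Theorem~4.1.11 in \cite{DZ}. Your write-up supplies the one detail the paper leaves implicit---that compactness of $\mathcal M_r(K)$ upgrades the weak LDP produced by \cite[Theorem~4.1.11]{DZ} to a full LDP---which is indeed the only thing to check.
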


\begin{proof} ({\it of Theorem \ref{ldp}}): As a base $\mathcal B$ for the topology of $\mathcal M_r(K)$, we can, e.g., take all open sets. For $\{\sigma_{\epsilon}\}$, we take the sequence of probability measures $\{\sigma_k\}$ on $\mathcal M_r(K)$ and we take $\epsilon  ={2|r|^2/|m_k|(|m_k|-1)}$. For $G\in \mathcal B$
$$\frac{2|r|^2}{|m_k|(|m_k|-1)}\log \sigma_k(G)= \log J_k^Q(G)
-\frac{2|r|^2}{|m_k|(|m_k|-1)}\log Z^Q_k$$
using (\ref{jkqmu}) and (\ref{sigmak}). From Proposition \ref{weightedtd}, and (\ref{jeqn}) with $Q$, 
$$\lim_{k\to \infty} \frac{2|r|^2}{|m_k|(|m_k|-1)}\log Z^Q_k=\log  \delta_Q(K)= \log J^Q(\mu^{K,Q});$$ and by Theorem \ref{obsolete}, if $E(\mu)<\infty$,
$$\inf_{G \ni \mu} \limsup_{k\to \infty} \log J_k^Q(G)=\inf_{G \ni \mu} \liminf_{k\to \infty} \log J_k^Q(G)=\log J^Q(\mu).$$
If, on the other hand, $E(\mu)=\infty$, then $J(\mu)=W(\mu)=0$ and hence for $G\ni \mu$ 
$$\lim_{k\to \infty} \log J_k^Q(G)=-\infty.$$
Thus by Proposition \ref{dzprop1}, $\{\sigma_k\}$ satisfies an LDP with rate function 
$$\mathcal I(\mu):=\log J^Q(\mu^{K,Q})-\log J^Q(\mu)=E_Q(\mu)-E_Q(\mu^{K,Q})$$
and speed $|m_{k}|(|m_{k}|-1)/2|r|^{2}$. This rate function is good since $\mathcal M_r(K)$ is compact.
\end{proof}

\section{Possibly intersecting sets}

Many of the results in the paper remain valid for nonpolar compact sets $K_1,\ldots,K_d$ that are not necessarily disjoint. We make the standing assumption, as in \cite{BKMW}, that
\\[.5\baselineskip]
{\it  (i) There exists a vector $(y_1,...,y_d)$ in the range of $C$ such that if $K_i\cap K_j\not =\emptyset$, then $y_iy_j >0$.}
\\[.5\baselineskip]
{\it (ii) If $\{i_1,...,i_m\}\subset \{1,2,...,d\}$ are indices such that the $m$ columns $\{C_{i_j}\}_{j=1,...,m}$ of $C$ are linearly dependent, then $\capa(\cap_{j=1}^m K_{i_j})=0$}.
\\[.5\baselineskip]
These assumptions are automatically satisfied if $C$ is positive definite. In section 2, the existence and uniqueness of a minimizing $d$-tuple of measures for the energy $E$ over $\mu \in \mathcal M_r(K)$ and in the weighted case for the energy $E_Q$ is covered in Theorem 1.8 of \cite{BKMW}. Indeed, it is proved that utilizing the partial potentials
$$U^{\mu}_{i}=\sum_{j=1}^{d}c_{i,j}U^{\mu_{j}},\qquad i=1,\ldots,d,$$ 
a measure $\mu$ minimizes the weighted energy $E_{Q}$ if and only if there exist constants $F_{1},...,F_d$ such that the variational inequalities
\begin{align*}
U^{\mu}_{i}(z) +Q_i& \geq F_{i},\quad \text{q.e. }z\in K_{i},\quad i=1,\ldots,d,\\
U^{\mu}_{i}(z) +Q_i& \leq F_{i},\quad \mu_{i}\text{-a.e. } z\in K_{i},\quad i=1,\ldots,d
\end{align*}
hold.

We claim that if, in addition, we assume that

\begin{equation}\label{hyp} c_{i,j} \ \hbox{\bf{is nonnegative if}} \ K_i\cap K_j\not =\emptyset \end{equation} 
then all of the results in sections 3-7 remain true. In particular, the Angelesco ensembles satisfying (\ref{hyp}) are covered in this setting as are the Nikishin ensembles when the sets $K_i$ and $K_{i\pm1}$ are disjoint. We indicate the minor modifications of the proofs/results needed in these sections with the above hypotheses.

The equality (\ref{ineg-Iij}) will now be replaced by an inequality with $\liminf$:
$$I(\mu_i,\mu_j)\leq \liminf_{k\to \infty} I(\mu_i^k,\mu_j^k)=\liminf_{k\to \infty}\frac{r_ir_j}{m_{i,k}m_{j,k}}\sum_{l=1}^{m_{i,k}}\sum_{p=1}^{m_{j,k}}\log \frac {1}{|z_{i,l}^{(k)}-z_{j,p}^{(k)}|}.$$
This leaves the rest of the proof of Proposition \ref{wup}, and the results in section 3,  unchanged. Note that the scaling result, (\ref{scal}), still holds. 

Hypothesis (\ref{hyp}) obviates the need for any modifications of the (vector) Bernstein-Markov properties in section 4. The result from \cite[Proposition 2.9]{BKMW} used in Lemma \ref{5.3}  that $E(\nu-\mu)$ is nonnegative and can vanish only when $\nu=\mu$ remains true; and since the variational inequalities listed above characterizing the solution of the equilibrium problem remain valid, all of the arguments in section 5 are unaltered.

The results in sections 6 and 7 rest solely on the preliminaries in the previous sections; thus, {\it Theorems \ref{wobsolete}, \ref{obsolete}, and the LDP Theorem \ref{ldp}, remain true for nonpolar compact sets $K_1,\ldots,K_d$ that are not necessarily disjoint provided assumptions (i), (ii) and (\ref{hyp}) are satisfied}.

\bigskip

\obeylines
\texttt{T. Bloom, bloom@math.toronto.edu
University of Toronto, Toronto, Ontario M5S 2E4 Canada
\medskip
N. Levenberg, nlevenbe@indiana.edu
Indiana University, Bloomington, IN 47405 USA
\medskip
F. Wielonsky, wielonsky@cmi.univ-mrs.fr
Universit\'e Aix-Marseille, CMI 39 Rue Joliot Curie
F-13453 Marseille Cedex 20, FRANCE }
\end{document}